\numberwithin{equation}{section}
\newtheorem{theorem}{Theorem}[section]
\newtheorem{lemma}[theorem]{Lemma}
\newtheorem{corollary}[theorem]{Corollary}
\newtheorem{example}[theorem]{Example}
\newtheorem{definition}[theorem]{Definition}
\newtheorem{proposition}[theorem]{Proposition}
\theoremstyle{remark}
\newcommand{\PF}{\text{PF}}
\newcommand{\Sym}{\mathfrak{S}}
\newcommand{\SH}{\text{S}}
\newcommand{\disp}{\text{disp}}
\newcommand{\cm}{\text{cm}}
\newcommand{\inv}{\text{inv}}
\newcommand{\PR}{\mathbb P}
\newcommand{\QR}{\mathbb Q}
\newcommand{\ER}{\mathbb E}
\newcommand{\FC}{\mathcal F}
\newcommand{\F}{\mathscr F}
\newcommand{\C}{\mathscr C}
\newcommand{\outcome}{\mathcal{O}}
\newcommand{\Var}{\mathbb V \mathrm{ar}}
\newcommand{\Cov}{\mathbb C \mathrm{ov}}
\newcommand{\be}{\begin{equation}}
\newcommand{\ee}{\end{equation}}
\def\weakto{\Longrightarrow}
\newcommand{\red}[1]{{\color{red} #1}}
\newcommand{\old}[1]{}
\NewDocumentCommand\DownArrow{O{2.0ex} O{black}}{%
   \mathrel{\tikz[baseline] \draw [<->, line width=0.5pt, #2] (0,0) -- ++(0,#1);}
}
\begin{document}

\forestset{parent color/.style args={#1}{
    {fill=#1},
    for tree={fill/. wrap pgfmath arg={#1!##1}{1/level()*80},draw=#1!80!black}},
    root color/.style args={#1}{fill={{#1!60!black!25},draw=#1!80!black}}
}

\title{Parking functions: From combinatorics to probability}

\author{Richard Kenyon}
\thanks{RK was supported by NSF DMS-1940932 and the Simons Foundation grant 327929}
\address{Department of Mathematics, Yale University, New Haven, CT 06520}
\email{richard.kenyon at yale.edu}

\author{Mei Yin}
\thanks{MY was supported by the University of Denver's Faculty Research Fund 84688-145601.}
\address{Department of Mathematics, University of Denver, Denver, CO 80208}
\email{mei.yin at du.edu}

\date{\today}

\subjclass[2010]{
60C05; 
05A16, 
05A19, 
60F10} 
\keywords{Parking function, Spanning forest, Tutte polynomial, Abel's binomial theorem, Asymptotic expansion}

\begin{abstract}
Suppose that $m$ drivers each choose a preferred parking space in a linear car park with $n$ spots. In order, each driver goes to their chosen spot and parks there if possible, and otherwise takes the next available spot if it exists. If all drivers park successfully, the sequence of choices is called a parking function. Classical parking functions correspond to the case $m=n$; we study here combinatorial and probabilistic aspects of this generalized case.

We construct a family of bijections between parking functions $\PF(m, n)$ with $m$ cars and $n$ spots and spanning forests $\F(n+1, n+1-m)$ with $n+1$ vertices and $n+1-m$ distinct trees having specified roots. This leads to a bijective correspondence between $\PF(m, n)$ and monomial terms in the associated Tutte polynomial of a disjoint union of $n-m+1$ complete graphs. We present an identity between the ``inversion enumerator'' of spanning forests with fixed roots and the ``displacement enumerator'' of parking functions. The displacement is then related to the number of graphs on $n+1$ labeled vertices with a fixed number of edges, where the graph has $n+1-m$ disjoint rooted components with specified roots.

We investigate various probabilistic properties of a uniform parking function,
giving a formula for the law of a single coordinate. As a side result we obtain a recurrence relation for the displacement enumerator.  Adapting known results on random linear probes, we further deduce the covariance between two coordinates
when $m=n$.
\end{abstract}

\maketitle

\section{Introduction}\label{intro}
Parking functions were introduced by Konheim and Weiss \cite{KW} in the study of the linear probes of random hashing functions. Since then, parking functions have appeared all over combinatorics, probability, group theory, computer science, and beyond. The parking problem has counterparts in the enumerative theory of trees and forests (Chassaing and Marckert, \cite{CM}), in the analysis of set partitions and hyperplane arrangements (Stanley, \cite{Stanley1} \cite{Stanley2}), in the configuration of abelian sandpiles (Cori and Rossin, \cite{CR}), among others. We refer to Yan \cite{Yan} for a comprehensive survey.

Consider a parking lot with $n$ parking spots placed sequentially along a one-way street. A line of $m \leq n$ cars enters the lot, one by one. The $i$th car drives to its preferred spot $\pi_i$ and parks there if possible; if the spot is already occupied then the car parks in the first available spot after that. The list of preferences $\pi=(\pi_1,\dots,\pi_m)$ is called a \emph{generalized parking function} if all cars successfully park. (This generalizes the term
\emph{parking function} which classically refers to the case $m=n$. When there is no risk of confusion we will drop the modifier ``generalized" and simply refer to both of these cases as parking functions).  We denote the set of  parking functions by $\PF(m, n)$, where $m$ is the number of cars and $n$ is the number of parking spots. Using the pigeonhole principle, we see that a parking function $\pi \in \PF(m, n)$ must have at most one value $=n$, at most two values $\geq n-1$, and for each $k$ at most $k$ values $\ge n-k+1$, and any such function is a parking function. Equivalently, $\pi$ is a parking function if and only if
\begin{equation}\label{pigeon}
\#\{k: \pi_k \leq i\} \geq m-n+i, \hspace{.2cm} \forall i=n-m+1, \dots, n.
\end{equation}
Note that parking functions are invariant under the action of $\Sym_m$ by permuting cars.

The number of classical parking functions $|\PF(n, n)|$ is $(n+1)^{n-1}$ and coincides with the number of labeled trees on $n+1$ vertices. This combinatorial property motivated much work in the early study of parking functions. Many combinatorial bijections between the set of parking functions of length $n$ and labeled trees on $n+1$ vertices have been constructed.
They reveal deep connections between parking functions and other combinatorial structures. See Gilbey and Kalikow \cite{GK} for an extensive list of references.

Various generalizations of parking functions have been explored, for example, double parking functions (Cori and Poulalhon, \cite{CP1}), $k$-parking functions (Yan, \cite{Yan1}), and parking functions associated with an arbitrary vector $\vec{x}$ (Kung and Yan, \cite{KY1}): given a vector $\vec{x}=(x_1, \dots, x_m)$, an $\vec{x}$-parking function of length $m$ is a sequence $(a_1, \dots, a_m)$ whose non-decreasing rearrangement $(b_1, \dots, b_m)$ satisfies $b_i\leq x_1+\cdots+x_i$ for all $1\leq i\leq m$ (compare with Proposition \ref{inc} below). In \cite{PS1}, Pitman and Stanley related the number of $\vec{x}$-parking functions to the volume polynomials of certain types of polytopes. The generalized parking function investigated in this paper may be alternatively posed as an $\vec{x}$-parking function of length $m$, where the vector is $\vec{x}=(n-m+1, 1, \dots, 1)$.

Write $[n]$ for the set of integers $1, \dots, n$, and $[n]_0$ for the set of integers $0, 1, \dots, n$. While examining quotients of the polynomial ring, Postnikov and Shapiro \cite{PS} proposed the notion of $G$-parking functions associated with a general connected digraph $G$ with vertex set $[n]_0$. A $G$-parking function is a function $g$ from $[n] \rightarrow \mathbb{N}$, the set of non-negative integers, satisfying the following condition: For each subset $U \subseteq [n]$ of vertices of $G$, there exists a vertex $j \in U$ such that the number of edges from $j$ to vertices outside $U$ is greater than $g(j)$. We view vertex $0$ as the root with $g(0)=\infty$. The $G$-parking function generalizes the classical parking function since when $G=K_{n+1}$, the complete graph on $n+1$ vertices, the two definitions coincide. 
A family of bijections between the spanning trees of a graph $G$ and the set of $G$-parking functions was established by Chebikin and Pylyavskyy \cite{CP}.

Further along this direction, Kosti\'{c} and Yan \cite{KY} proposed the notion of $G$-multiparking functions. A $G$-multiparking function is a function $g$ from $[n]_0 \rightarrow \mathbb{N} \cup \{\infty\}$ such that for each subset $U \subseteq [n]_0$ of vertices of $G$, let $j$ be the vertex of smallest index in $U$;  either $g(j)=\infty$ or there exists a vertex $i \in U$ such that the number of edges from $i$ to vertices outside $U$ is greater than $g(i)$.
The vertices that satisfy $g(j)=\infty$ are viewed as roots. The $G$-multiparking functions with exactly one root (which is necessarily vertex $0$, as it is the vertex with smallest index in $U=[n]_0$) are exactly $G$-parking functions. Building upon the work of Gessel and Sagan \cite{GS1} on Tutte polynomials related to parking functions, Kosti\'{c} and Yan \cite{KY} described a family of bijections between the spanning forests of a graph $G$ and the set of $G$-multiparking functions.

\begin{lemma}\label{intro-lemma}
If $G=K_{n+1}$ with $n-m+1$ roots fixed at vertices $0, \dots, n-m$, then a $G$-multiparking function is a generalized parking function on $m$ cars and $n$ spots.
\end{lemma}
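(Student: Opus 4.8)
The plan is to unwind both definitions on the complete graph $K_{n+1}$ and match them term by term, the only genuine content being an off-by-one reindexing. First I would specialize the $G$-multiparking condition to $G = K_{n+1}$: since every vertex of $K_{n+1}$ is adjacent to every other vertex, the number of edges from a vertex $i \in U$ to vertices outside $U$ equals $n+1-|U|$, independent of which $i$ is chosen. Thus for a subset $U \subseteq [n]_0$ with smallest-index vertex $j$, the defining condition reads: either $g(j) = \infty$, or there exists $i \in U$ with $g(i) < n+1-|U|$.

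Next I would exploit the placement of the roots at the smallest indices $0, \dots, n-m$. Any $U$ that meets $\{0, \dots, n-m\}$ has smallest-index vertex $j \le n-m$, which is a root with $g(j) = \infty$, so its constraint is vacuous; this clean separation is exactly what the ``smallest index'' clause buys us once the roots occupy an initial segment. Hence the only effective constraints come from subsets $U$ of the non-root set $S := \{n-m+1, \dots, n\}$, and they depend only on the multiset of finite values $\{g(i) : i \in S\}$. For each fixed size $k = |U|$, the requirement ``$\exists i \in U : g(i) < n+1-k$'' is hardest to meet when $U$ carries the $k$ largest values, since that maximizes $\min_{i \in U} g(i)$. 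Writing $h_1 \le \cdots \le h_m$ for the sorted list of the $m$ values on $S$, the whole family of constraints therefore collapses to $h_{m-k+1} \le n-k$ for $k = 1, \dots, m$, equivalently $\#\{i \in S : g(i) \le n-k\} \ge m-k+1$.

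Finally I would set up the correspondence $\pi_i := g(n-m+i) + 1$ for $i = 1, \dots, m$. Because the $k=1$ constraint forces $g(i) \le n-1$ on $S$, the shifted values lie in $\{1, \dots, n\}$, the correct range of preferences for $n$ spots. Substituting $g = \pi - 1$ and reindexing by $\ell = n-k$ (so $\ell$ runs over $n-m, \dots, n-1$) turns the collapsed constraint into $\#\{a : \pi_a \le \ell+1\} \ge m-n+(\ell+1)$; relabeling $i = \ell+1 \in \{n-m+1, \dots, n\}$ this is precisely the parking characterization \eqref{pigeon}. Running the same computation in reverse recovers a $G$-multiparking function from any element of $\PF(m,n)$, so the shift $g \mapsto g+1$ on the non-root coordinates is a bijection.

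The individual steps are routine; the one place to be careful is the reduction to the binding subset together with the subsequent index bookkeeping, where the shift between the $\mathbb{N}$-valued multiparking convention (distances) and the $\{1, \dots, n\}$-valued parking convention (spots) must be tracked so that the inequality thresholds align exactly with \eqref{pigeon} rather than landing one off.
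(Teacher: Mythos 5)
Your proof is correct and follows essentially the same route as the paper: both arguments observe that only subsets $U$ of the non-root vertices impose constraints, reduce each cardinality-$k$ constraint to its worst case (the $k$ largest values), and translate the resulting condition on the sorted values of $g$ into the parking criterion via the shift $\pi = g+1$. The only cosmetic difference is that you land on the characterization \eqref{pigeon} directly, while the paper lands on the equivalent sorted-rearrangement criterion \eqref{mirror} and cites Proposition \ref{inc}.
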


In other words, just as a classical parking function $\PF(n, n)$ may be viewed as a concrete realization of a $G$-parking function, a generalized parking function $\PF(m, n)$ may be viewed as a concrete realization of a modified $G$-multiparking function.

\begin{figure}
\begin{center}
\begin{tikzpicture}
  \graph[circular placement, radius=4cm,
         empty nodes, nodes={scale=0.3, circle, draw, fill=blue}] {
    \foreach \x in {a,...,f} {
      \foreach \y in {\x,...,f} {
        \x -- \y;
      };
    };
  };
\pgfmathparse{90}
\node at (\pgfmathresult:1.5cm) {$0/\infty$};
\pgfmathparse{150}
\node at (\pgfmathresult:1.6cm) {$1/\infty$};
\pgfmathparse{210}
\node at (\pgfmathresult:1.6cm) {$2/\infty$};
\pgfmathparse{270}
\node at (\pgfmathresult:1.5cm) {$3/3$};
\pgfmathparse{330}
\node at (\pgfmathresult:1.6cm) {$4/1$};
\pgfmathparse{30}
\node at (\pgfmathresult:1.6cm) {$5/3$};
\end{tikzpicture}
\end{center}
\caption{A $K_6$-multiparking function with $3$ roots. Each vertex is labeled by $i/g(i)$, i.e. the first label is the index of the vertex and the second label is the function value. The corresponding parking function is $(4, 2, 4) \in \PF(3, 5)$.}\label{multi}
\end{figure}
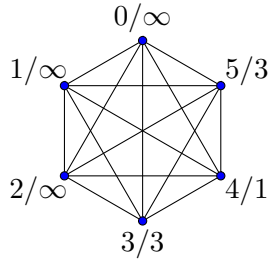

The correspondence between a $K_{n+1}$-multiparking function $g$ and a parking function $\pi \in \PF(m, n)$ is obtained via recording the values of $g$ at the non-root vertices of $K_{n+1}$ as a sequence $\pi=(g(n-m+1)+1, \dots, g(n)+1)$. See Figure \ref{multi}. We will present a proof of this correspondence in Section \ref{classical}.

This paper is organized as follows. Section \ref{comb} serves as a complement to existing combinatorial results on parking functions in the literature. Via the introduction of an auxiliary object, we give a family of bijections between parking functions $\PF(m, n)$ with $m$ cars and $n$ spots and spanning forests $\F(n+1, n+1-m)$ with $n+1$ vertices and $n+1-m$ distinct trees such that a specified set of $n+1-m$ vertices are the roots of the different trees (Theorems \ref{map} and \ref{construction}). This combinatorial construction is standard in nature, and parallel results may be found in Yan \cite{Yan1} for $\vec{x}$-parking functions and Kosti\'{c} and Yan \cite{KY} for $G$-multiparking functions. An important difference is that in our setting the roots are fixed. We extend the concept of critical left-to-right maxima and displacement for classical parking functions in Kosti\'{c} and Yan \cite{KY} to generalized parking functions, and establish a bijective correspondence between generalized parking functions $\PF(m, n)$ and monomial terms in the associated Tutte polynomial of the disjoint union of $n-m+1$ complete graphs whose combined number of vertices is $n+1$ (Theorem \ref{main}). This leads to an identity between the inversion enumerator of spanning forests with fixed roots and the displacement enumerator of generalized parking functions (Theorem \ref{inv-disp}). More interestingly, we relate the displacement of generalized parking functions to the number of graphs on $n+1$ labeled vertices with a fixed number of edges, where the graph has $n+1-m$ disjoint components with a specified vertex belonging to each component (Theorem \ref{disp}). This extends the corresponding connection between classical parking functions and connected graphs on $n+1$ labeled vertices with a fixed number of edges (Janson et al., \cite{JKLP}). Extending the method in Knuth \cite[Section 6.4]{Knuth2}, a bijective construction that carries the displacement of parking functions to the number of inversions of rooted forests is briefly described towards the end of this section.

In Section \ref{random}, we turn our attention back to the parking function itself and investigate various properties of a parking function chosen uniformly at random from $\PF(m, n)$. We illustrate the notion of parking function shuffle that decomposes a parking function into smaller components (Definition \ref{shuffle}). This construction leads to an explicit characterization of single coordinates $\pi_1\in[n]$. of random parking functions (Theorem \ref{main1} and Corollary \ref{component}). We examine the behavior of $\pi_1$ in the generic situation $m \lesssim n$, and find that on the left end it deviates from the constant value in a Poisson fashion while on the right end it approximates a Borel distribution with parameter $m/n$ (Corollaries \ref{boundary1} and \ref{boundary2}). This asymptotic tendency extends that in the special situation $m=n$ for classical parking functions, where the boundary behavior of single coordinates on the left and right ends both approach Borel$(1)$, as shown in Diaconis and Hicks \cite{DH}. As a side result of the shuffle construction, we also obtain a recurrence relation for the displacement enumerator of generalized parking functions (Proposition \ref{side}). We compute asymptotics
of all moments of single coordinates in Theorem \ref{mean}. Adapting known results on random linear probes (Theorem \ref{adapted}), we further deduce the covariance between two coordinates of random parking functions when $m=n$ (Proposition \ref{cov}). For $m \lesssim n$, the locations of unattempted parking spots have an impact on the random parking function, and we list a few interesting results (Proposition \ref{un} and Theorem \ref{Brownian}).

\section{Parking functions, spanning forests, and Tutte polynomials}\label{comb}
\subsection{Classical results}\label{classical}
The following proposition establishes the connection between generalized parking functions in our setting and $\vec{x}$-parking functions, where $\vec{x}=(n-m+1, 1, \dots, 1)$.
\begin{proposition}\label{inc}
Take a sequence $\pi=(\pi_1, \dots, \pi_m)$ with non-decreasing rearrangement $\lambda=(\lambda_1, \dots, \lambda_m)$. Then $\pi \in \PF(m, n)$ if and only if
\begin{equation}\label{mirror}
\lambda_i \le n-m+i, \hspace{.2cm} \forall i=1, \dots, m.
\end{equation}
\end{proposition}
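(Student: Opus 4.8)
The plan is to reduce the two-sided bound \eqref{mirror} on the sorted sequence to the counting characterization \eqref{pigeon}, which has already been recorded as an equivalent description of membership in $\PF(m,n)$. The bridge between the two is the elementary observation that, for a non-decreasing sequence $\lambda_1 \le \cdots \le \lambda_m$ and any threshold $i$, one has $\#\{k : \lambda_k \le i\} \ge j$ if and only if $\lambda_j \le i$: the forward direction holds because at least $j$ entries being $\le i$ forces the $j$ smallest of them, in particular $\lambda_j$, to lie at or below $i$, and the reverse because $\lambda_j \le i$ makes the $j$ smallest entries $\lambda_1, \dots, \lambda_j$ all at most $i$.

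First I would note that the counting function is insensitive to reordering, so $\#\{k : \pi_k \le i\} = \#\{k : \lambda_k \le i\}$ for every $i$; this is precisely the $\Sym_m$-invariance already observed for parking functions. Feeding this into \eqref{pigeon} turns that condition into $\#\{k : \lambda_k \le i\} \ge m - n + i$ for all $i = n - m + 1, \dots, n$, and applying the observation above with rank $j = m - n + i$ converts each such inequality into $\lambda_{m-n+i} \le i$. Writing $j = m - n + i$, equivalently $i = n - m + j$, this reads $\lambda_j \le n - m + j$, and as $i$ sweeps over $n - m + 1, \dots, n$ the rank $j$ sweeps over exactly $1, \dots, m$. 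Hence \eqref{pigeon} holds if and only if $\lambda_j \le n - m + j$ for all $j = 1, \dots, m$, which is \eqref{mirror}. Because every implication in this chain is reversible, the equivalence follows in both directions simultaneously.

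I do not anticipate a genuine obstacle: the argument is a single change of variables, and the only point demanding care is aligning the substitution $j = m - n + i$ so that the two index ranges $\{n-m+1,\dots,n\}$ and $\{1,\dots,m\}$ match, together with checking that the bound $n - m + j$ always stays in a meaningful range (it is at least $j \ge 1$ and at most $n$ since $m \le n$), so that no condition is vacuous or silently dropped. Should one prefer not to invoke \eqref{pigeon}, the same equivalence can instead be obtained directly from the parking dynamics by the standard greedy argument, but routing through \eqref{pigeon} reduces the whole proof to this one index translation.
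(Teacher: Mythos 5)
Your proof is correct and takes essentially the same route as the paper: the paper's proof simply states that the equivalence is a ``switch of coordinates'' between the counting criterion \eqref{pigeon} and the sorted criterion \eqref{mirror} (illustrated by Figure \ref{pfineqs}), which is precisely the index substitution $j = m-n+i$ that you carry out explicitly. Your write-up just fills in the details the paper leaves to the figure, including the rank-vs-threshold observation $\#\{k:\lambda_k\le i\}\ge j \iff \lambda_j\le i$.
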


\begin{proof}
This alternative criterion for parking functions involves a switch of coordinates, as illustrated in Figure \ref{pfineqs}.
\end{proof}

\old{
\begin{figure}
\begin{center}
\begin{tikzpicture}[scale=0.8]
(8,0) rectangle +(5,3);
\draw[help lines] (8,0) grid +(5,3);
\draw[dashed] (10,0) -- +(3,3);
\coordinate (prev) at (8,0);
\draw [color=red, line width=2] (8,0)--(8,1)--(9,1)--(9,2)--(10,2)--(11,2)--(12,2)--(12,3)--(13,3);
\draw (10,2) node [scale=0.5, circle, draw,fill=purple]{};
\draw (11,2) node [scale=0.5, circle, draw,fill=purple]{};
\draw (12,2) node [scale=0.5, circle, draw,fill=purple]{};
\draw (13,3) node [scale=0.5, circle, draw,fill=purple]{};
\draw (10,0) node[below]{$(n-m+1,1)$};
\draw (13,3) node[right]{$(n,m)$};
\draw (13,0) node[right]{$i$};
\draw (8,3) node[left]{$\#\{k: \pi_k \leq i\}$};
\end{tikzpicture}
\begin{tikzpicture}[scale=0.8]
(8,0) rectangle +(3,5);
\draw[help lines] (8,0) grid +(3,5);
\draw[dashed] (8,2) -- +(3,3);
\coordinate (prev) at (8,0);
\draw [color=red, line width=2] (8,0)--(9,0)--(9,1)--(10,1)--(10,2)--(10,3)--(10,4)--(11,4)--(11,5);
\draw (10,2) node [scale=0.5, circle, draw,fill=purple]{};
\draw (10,3) node [scale=0.5, circle, draw,fill=purple]{};
\draw (10,4) node [scale=0.5, circle, draw,fill=purple]{};
\draw (11,5) node [scale=0.5, circle, draw,fill=purple]{};
\draw (8,2) node[left]{$(1,n-m+1)$};
\draw (11,5) node[right]{$(m,n)$};
\draw (11,0) node[right]{$i$};
\draw (8,5) node[left]{$\lambda_i$};
\end{tikzpicture}
\caption{The two equivalent criteria for parking functions. When $m=n$, we recover the Dyck paths.}\label{cri}
\end{center}
\end{figure}
}

\begin{figure}[htbp]
\begin{center}\includegraphics[width=2.4in]{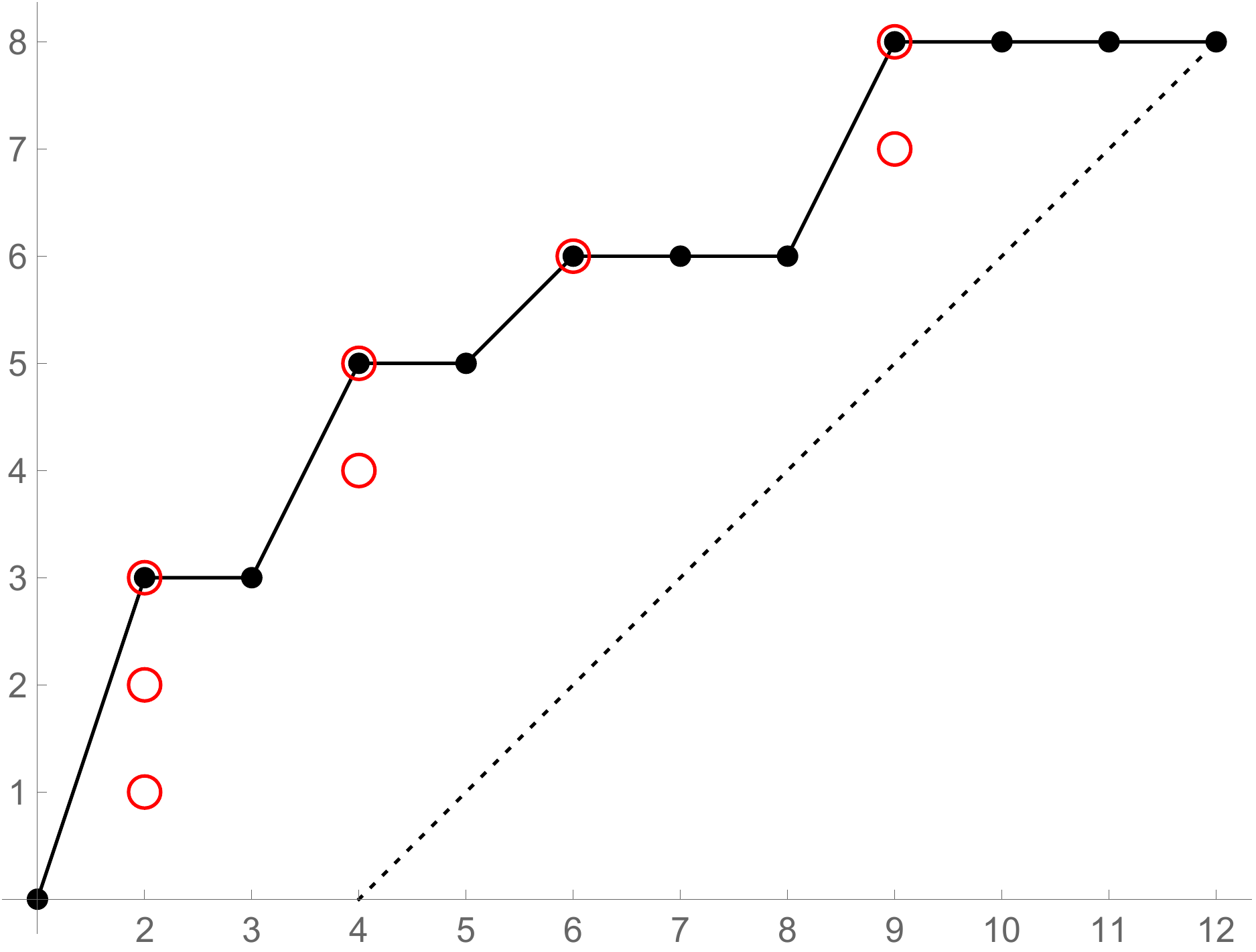}\end{center}
\caption{\label{pfineqs}
Equivalent inequalities for parking functions. For a parking function $\pi\in \PF(8,12)$ with non-decreasing rearrangement
$\lambda_1,\dots,\lambda_8 = 2,2,2,4,4,6,9,9$, we plotted points $(\lambda_j, j)$ which satisfy
(\ref{mirror}) in red and points $(i, \#\{k~|~\pi_k\le i\})$ which satisfy (\ref{pigeon}) in black. }
\end{figure}

We are now ready to present a proof for Lemma \ref{intro-lemma} stated in the Introduction.

\begin{proof}[Proof of Lemma \ref{intro-lemma}]
Consider an arbitrary subset $U \subseteq \{n-m+1, \dots, n\}$ of cardinality $i$. There exists a vertex $j \in U$ with $g(j)<n-i+1$, where $n-i+1$ is the number of edges from $j$ to vertices outside $U$, if and only if the non-decreasing rearrangement $\lambda$ of $g$ on $\{n-m+1, \dots, n\}$ satisfies $\lambda_{m-i+1}<n-i+1$, which is equivalent to $\lambda_{i}+1\leq n-m+i$. The conclusion then follows from Proposition \ref{inc}.
\end{proof}

\begin{theorem}[adapted from Pitman and Stanley \cite{PS1}]\label{number}
The number of parking functions $|\PF(m, n)|=(n-m+1)(n+1)^{m-1}$.
\end{theorem}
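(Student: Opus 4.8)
The plan is to generalize Pollak's classical circular argument, which yields $|\PF(n,n)| = (n+1)^{n-1}$, to the case of $m \le n$ cars. First I would set up an auxiliary circular parking process: arrange $n+1$ spots around a circle, identified with $\mathbb Z_{n+1} = \{0, 1, \dots, n\}$, and let each of the $m$ cars choose a preference in $\mathbb Z_{n+1}$, driving clockwise to the first unoccupied spot. Since $m \le n < n+1$, every car parks, and exactly $n+1-m$ spots remain empty. In total there are $(n+1)^m$ preference sequences $\alpha = (\alpha_1, \dots, \alpha_m) \in \mathbb Z_{n+1}^m$.

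The key reduction is that a sequence $\pi \in \{1, \dots, n\}^m$ lies in $\PF(m,n)$ if and only if, run through the circular process, it leaves spot $0$ empty. Indeed, if spot $0$ is empty at the end then no car ever occupied it, so (an occupied spot never becomes empty again) no car preferred $0$ and no car wrapped past spot $n$; the circular process therefore coincides with the linear one on spots $1, \dots, n$, and all cars park. Conversely a linear parking function never touches spot $0$. Thus $|\PF(m,n)|$ equals the number of $\alpha \in \mathbb Z_{n+1}^m$ whose circular process leaves spot $0$ empty.

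To count these I would exploit the rotational symmetry of the circle. For $c \in \mathbb Z_{n+1}$ the shift $\alpha \mapsto \alpha + c$ (coordinatewise mod $n+1$) is a bijection of $\mathbb Z_{n+1}^m$ that rotates the entire parking trajectory, and hence the set of empty spots, by $c$. Consequently the number of sequences leaving a prescribed spot empty is the same for every spot. Counting incidences, summing over all $\alpha$ the number of its empty spots gives $(n+1)^m(n+1-m)$, and by the symmetry this is split evenly over the $n+1$ spots, so the number leaving spot $0$ empty is $\frac{(n+1)^m(n+1-m)}{n+1} = (n-m+1)(n+1)^{m-1}$, as claimed.

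The main obstacle, and the only genuine departure from the classical $m=n$ case, is the presence of several empty spots rather than exactly one. When $m = n$ each sequence has a unique empty spot, so exactly one of its $n+1$ rotations is a parking function and the division by $n+1$ is immediate; for $m < n$ one instead argues via equidistribution of empty spots and the incidence count above. The point requiring care is the rotation-equivariance of the parking process, namely that each car's final position shifts by $c$; this follows by induction on the cars, since the clockwise ``first available spot'' rule is preserved by the automorphism $x \mapsto x+c$ of $\mathbb Z_{n+1}$. I would also remark that the formula can alternatively be recovered from Proposition~\ref{inc} by enumerating non-decreasing sequences subject to $\lambda_i \le n-m+i$, or from the Pitman--Stanley polytope volume, but the circular argument is the most economical.
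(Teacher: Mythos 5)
Your proof is correct and is essentially the paper's own argument: both extend Pollak's circle construction by adding spot $n+1$, identify $\PF(m,n)$ with the circular preference sequences leaving that spot empty, and divide $(n+1)^m(n-m+1)$ by $n+1$ using rotation equivariance. Your incidence-count over all empty spots and the paper's orbit count (each sequence has exactly $n-m+1$ rotations that are parking functions) are the same double-counting argument viewed from dual sides.
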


\begin{proof}
We extend Pollak's circle argument for classical parking functions \cite{Pollak} to this more general case. Add an additional space $n+1$, and arrange the spaces in a circle. Allow $n+1$ also as a preferred space. There are $(n+1)^m$ possible preference sequences $\pi$ for all $m$ cars, and $\pi$ is a parking function if and only if the spot $n+1$ is left open. For $k \in \mathbb{Z}/(n+1)\mathbb{Z}$, the preference sequence $\pi+k(1, \dots, 1)$ (modulo $n+1$) gives an assignment whose missing spaces are the rotations by $k$ of the missing spaces for the assignment of
$\pi$. Since there are $n-m+1$ missing spaces for the assignment of any preference sequence, any preference sequence $\pi$ has $n-m+1$ rotations which are parking functions. Therefore
\begin{equation}
|\PF(m, n)|=\frac{n-m+1}{n+1}(n+1)^m=(n-m+1)(n+1)^{m-1}.
\end{equation}
\end{proof}

The above classical results give us an effective method for generating a uniform random parking function: Choose an arbitrary preference sequence $\pi$ and then choose a random hole. Find $k$ so that $\pi+k(1, \dots, 1)$ sends this hole to $n+1$.

\subsection{One-to-one correspondence between parking functions and spanning forests}\label{pf}

The number $|\PF(m,n)|$ from Theorem \ref{number} in fact coincides with the number of rooted spanning forests $\F(n+1, n+1-m)$ with $n+1$ vertices and $n+1-m$ distinct trees such that a specified set of $n+1-m$ vertices are the roots of the different trees. Building upon the correspondence between classical parking functions and rooted spanning trees illustrated in Chassaing and Marckert \cite{CM} and Yan \cite{Yan}, we will construct
in this section an explicit bijection between generalized parking functions and such rooted spanning forests.

A parking function $\pi \in \PF(m, n)$ may be uniquely determined by its associated \emph{specification} $\vec{r}(\pi)$ and \emph{order permutation} $\sigma(\pi)$. Here the specification is $\vec{r}(\pi)=(r_1, \dots, r_n)$, where $r_k=\#\{i: \pi_i=k\}$ records the number of cars whose first preference is spot $k$. The order permutation $\sigma(\pi) \in \Sym_m$, on the other hand, is defined by
\begin{equation}
\sigma_i=|\{j: \pi_j<\pi_i, \text{ or } \pi_j=\pi_i \text{ and } j\leq i\}|,
\end{equation}
and so is the permutation that orders the list, without switching elements which are the same. For example, for $\pi=(4,2,2,4)$, $\sigma(\pi)=3124$. In words, $\sigma_i$ is the position of the entry $\pi_i$ in the non-decreasing rearrangement of $\pi$. We can easily recover a parking function $\pi$ by replacing $i$ in $\sigma(\pi)$ with the $i$th smallest term in the sequence $1^{r_1}\dots n^{r_n}$.

However, not every pair of a length $n$ vector $\vec{r}$ and a permutation $\sigma \in \Sym_m$ can be the specification and the order permutation of a parking function with $m$ cars and $n$ spots. The vector and the permutation must be compatible with each other, in the sense that the terms $1+\sum_{i=1}^{k-1} r_i, \dots, \sum_{i=1}^{k} r_i$ appear from left to right in $\sigma$ for every $k$ to satisfy the non-decreasing rearrangement requirement of $\pi$. Moreover, the specification $\vec{r}$ should satisfy a balance condition, equivalent to the sequence $1^{r_1}\dots n^{r_n}$ satisfying (\ref{mirror}). If we let
$k_i(\pi)$ for $i=1, \dots, n-m$ (and $k_0(\pi)=0$ and $k_{n-m+1}(\pi)=n+1$) represent the $n-m$ parking spots that are never attempted by any car, then
\begin{align}\label{determine1}
&\sum_{s=1}^{k_i} r_s=k_i-i, \hspace{.2cm} \forall 1\leq i\leq n-m, \\
&\sum_{s=1}^{j} r_s>j-i-1, \hspace{.2cm} \forall k_i<j<k_{i+1}, 0\leq i\leq n-m, \label{determine2} \\
&\sum_{s=1}^n r_s=m. \label{determine3}
\end{align}
Conversely, $\vec r$ is a specification of a parking function if there exist $n-m$ numbers $k_1, \dots, k_{n-m}$ with $0:=k_0<k_1<\cdots<k_{n-m}<k_{n-m+1}:=n+1$, satisfying the above.
These conditions are illustrated in a later example.

Let $\C(m, n)$ be the set of all compatible pairs.

\begin{theorem}\label{map}
The set $\C(m, n)$ is in one-to-one correspondence with $\PF(m, n)$.
\end{theorem}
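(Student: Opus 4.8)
The plan is to exhibit the two maps that the surrounding discussion already suggests and to verify they are mutually inverse. In the forward direction, define $\Phi:\PF(m,n)\to\C(m,n)$ by $\Phi(\pi)=(\vec r(\pi),\sigma(\pi))$, the specification and order permutation attached to $\pi$. In the reverse direction, define $\Psi:\C(m,n)\to\PF(m,n)$ by the reconstruction rule: given a compatible pair $(\vec r,\sigma)$, form the weakly increasing word $w=1^{r_1}2^{r_2}\cdots n^{r_n}$, which has length $m$ by (\ref{determine3}), and set $\pi_i=w_{\sigma_i}$, i.e. replace each value $\sigma_i$ by the $\sigma_i$th smallest entry of $w$. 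The theorem will follow once I check that $\Phi$ and $\Psi$ land in the claimed sets and that $\Psi\circ\Phi$ and $\Phi\circ\Psi$ are the respective identities.

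First I would confirm that $\Phi(\pi)$ is genuinely a compatible pair. Compatibility in the sense of the first requirement is immediate from the definition of $\sigma(\pi)$: the cars preferring spot $k$ are assigned exactly the consecutive ranks $1+\sum_{s<k}r_s,\dots,\sum_{s\le k}r_s$, and by construction ties among equal preferences are resolved by increasing original index, so these rank values occur left to right in $\sigma(\pi)$. For the balance condition I would appeal to Proposition \ref{inc}. Writing $R_j=\sum_{s=1}^j r_s=\#\{i:\pi_i\le j\}$, the inequality $\lambda_i\le n-m+i$ of (\ref{mirror}) is equivalent to $R_j\ge m-n+j$ for all $j$; reading off the spots where this bound is saturated as one lowers the associated lattice height from $n-m$ down to $0$ produces the indices $k_1<\cdots<k_{n-m}$ of never-attempted spots and yields precisely the equalities (\ref{determine1}) and the strict inequalities (\ref{determine2}), together with the total-mass condition (\ref{determine3}). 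Conversely, if $(\vec r,\sigma)$ is compatible then $w$ is a non-decreasing sequence satisfying (\ref{mirror}), so $\pi=\Psi(\vec r,\sigma)$, being a rearrangement of $w$, lies in $\PF(m,n)$ by Proposition \ref{inc}.

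It remains to verify the two compositions. For $\Psi\circ\Phi$, starting from $\pi$ the word $w$ is just the non-decreasing rearrangement of $\pi$, and $\sigma_i(\pi)$ is by definition the rank of $\pi_i$ in $w$; hence $w_{\sigma_i(\pi)}=\pi_i$ and the reconstruction returns $\pi$. For $\Phi\circ\Psi$, let $\pi=\Psi(\vec r,\sigma)$. The value $k$ occurs in $\pi$ exactly $r_k$ times since $\pi$ is a rearrangement of $w$, so $\vec r(\pi)=\vec r$. The equality $\sigma(\pi)=\sigma$ is the one place where the compatibility hypothesis is essential: within each block of indices carrying a common value $k$, the reconstructed ranks must agree with the tie-broken ranks used to define $\sigma(\pi)$, and this holds precisely because compatibility forces the rank labels $1+\sum_{s<k}r_s,\dots,\sum_{s\le k}r_s$ to appear in left-to-right order in $\sigma$.

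I expect the main obstacle to be the bookkeeping in the balance step, namely pinning down that the saturation points of $R_j\ge m-n+j$ are exactly the $n-m$ never-attempted spots $k_i$ and that the three conditions (\ref{determine1})--(\ref{determine3}) are equivalent to (\ref{mirror}) rather than merely implied by it. The bijection itself is essentially formal once this translation is in hand, so I would devote the bulk of the argument to making the correspondence between the height profile of the partial sums $R_j$ and the indices $k_i$ precise, and to the tie-breaking verification in $\Phi\circ\Psi$, which are the only genuinely substantive checks.
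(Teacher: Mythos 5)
Your proposal is correct and takes essentially the same route as the paper: the paper's proof of Theorem \ref{map} is literally ``immediate from the above construction,'' meaning the specification/order-permutation encoding and the reconstruction rule $\pi_i = w_{\sigma_i}$ that you make explicit as $\Phi$ and $\Psi$. Your additional care in checking that the balance conditions (\ref{determine1})--(\ref{determine3}) translate to (\ref{mirror}) via the partial sums $R_j$ (with $k_i$ the first spot where $j-R_j$ reaches $i$) simply fills in details the paper leaves implicit in the paragraph preceding the theorem.
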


\begin{proof}
This is immediate from the above construction.
\end{proof}

\begin{theorem}\label{construction}
The set $\C(m, n)$ is in one-to-one correspondence with $\F(n+1, n+1-m)$.
\end{theorem}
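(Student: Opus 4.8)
The plan is to upgrade the numerical identity $|\C(m,n)|=|\PF(m,n)|=(n-m+1)(n+1)^{m-1}$ (Theorems~\ref{map} and~\ref{number}), which already matches $|\F(n+1,n+1-m)|=(n+1-m)(n+1)^{m-1}$ by the generalized Cayley formula for rooted forests, into an explicit bijection. The construction will generalize the breadth-first correspondence between classical parking functions and labeled trees of Chassaing and Marckert~\cite{CM} (see also~\cite{Yan}) from a single tree to a forest with the prescribed roots $0,1,\dots,n-m$. The organizing principle is that a rooted forest stores two independent kinds of information: a \emph{shape}, encoded by the out-degree sequence read off under a canonical traversal, and a \emph{labeling} of its non-root vertices. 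I will match the shape to the specification $\vec r$ and the labeling to the order permutation $\sigma$.

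I would define the map first in the direction forest $\to$ pair, which is the more transparent one. Given a forest in $\F(n+1,n+1-m)$, run a breadth-first search that processes the trees rooted at $0,1,\dots,n-m$ in this order and, at each vertex, enqueues its children in increasing label order; this lists all vertices as $v_1,\dots,v_{n+1}$. Set $r_j$ equal to the number of children of $v_j$ for $j=1,\dots,n$ (the last vertex $v_{n+1}$ is necessarily a leaf and is discarded), and let $\sigma$ record the relative order in which the $m$ non-root labels $n-m+1,\dots,n$ are encountered. The cumulative vertex counts of the first $i$ trees then recover the never-attempted spots $k_i$ appearing in (\ref{determine1}). In the reverse direction I would read $\vec r$ as a \L{}ukasiewicz-type out-degree profile, use its prescribed partial sums to cut it into $n+1-m$ blocks whose initial vertices carry the roots $0,\dots,n-m$, rebuild each tree shape by replaying the breadth-first rule, and finally pour the non-root labels into the available slots in the order prescribed by $\sigma$.

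The crux, and the step I expect to require the most care, is proving that the compatibility constraints defining $\C(m,n)$ correspond exactly to the output being a \emph{bona fide} rooted spanning forest with precisely $n+1-m$ trees rooted at the specified vertices. This amounts to two claims. First, a ballot/\L{}ukasiewicz-path argument: the balance conditions (\ref{determine1})--(\ref{determine3}) on $\vec r$ hold if and only if the associated out-degree sequence is the breadth-first code of a forest with exactly $n+1-m$ components, with the strict inequalities (\ref{determine2}) keeping the search queue nonempty inside each tree and the equalities (\ref{determine1}) marking the instants at which one tree closes and the next root enters. Second, the compatibility of $\sigma$ with $\vec r$ must be shown equivalent to the labeling being consistent with the traversal, so that the two maps invert each other. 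The feature genuinely new to this setting, absent from both the single-tree case and the $G$-multiparking functions of Kosti\'{c} and Yan~\cite{KY}, is that the roots are fixed in advance; the dividers $k_0<k_1<\dots<k_{n-m}$ are exactly the device that pins each prescribed root to its assigned vertex, and verifying that they do so correctly is where the fixed-root hypothesis is used. Once the shape/labeling decomposition is in place, checking that the forward and inverse maps compose to the identity is routine.
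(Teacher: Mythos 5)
Your proposal is correct and takes essentially the same route as the paper: a breadth-first encoding of the forest in which $\vec r$ is the successor-count (out-degree) sequence, the order permutation is read off from the BFS listing of the non-root labels (children enqueued in increasing order, which is exactly what makes the pair compatible), and the balance conditions (\ref{determine1})--(\ref{determine3}) are verified by a queue/ballot argument in which the empty-queue times mark the $k_i$. The only cosmetic difference is that you traverse the forest tree by tree, which is precisely the paper's ``BFS version II'' variant; the paper's displayed proof uses the level-by-level version I but explicitly notes that any algorithm growing the forest through subforests with the same roots yields an equivalent bijection.
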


\begin{proof} We illustrate the proof with a representative
example. See Figure \ref{illustration1} representing an element of
$\F(13,4)$. We read the vertices in ``breadth first search" (BFS) order: $v_{01},\dots, v_{04}, v_5, \dots, v_{13} = 01,02,03,04,2,4,6,3,1,7,5,9,8$. That is, read root vertices in order first, then all vertices at level one (distance one
from a root), then those at level two (distance two from a root), and so on, where vertices at a given level are naturally ordered in order of increasing predecessor, and, if they have the same predecessor, increasing order.
We let $\sigma= 246317598$ be this vertex ordering once we remove the root vertices.
We let $r_i$ record the number of successors of $v_i$, that is, $\vec r=(2, 0, 1, 1, 0, 2, 0, 2, 0, 0, 1, 0)$.
Now $\sigma^{-1}=514273698$ is compatible with $\vec r$, by virtue of the fact that
vertices with the same predecessor are read in increasing order.

In order to show that $\vec r$ is balanced, we use a queue, where (starting from a queue containing the root vertices $v_{01},v_{02},v_{03},v_{04}$),
at each time step we
\begin{itemize}
\item read in the successors of the next $v_i$ (if the queue is empty) or
\item remove the top element of the queue then read in the successors of the next $v_i$.
\end{itemize}

See Figure \ref{illustration2}.
First we read in the successors of $v_{01}$, which are $2$ and $4$. Next, we remove the top element of the queue $2$ and read in the successors of $v_{02}$, which is none. Then, we remove the top element of the queue $4$ and read in the successors of $v_{03}$, which is $6$. Then, we remove the top element of the queue $6$ and read in the successors of $v_{04}$, which is $3$. Then, we remove the top element of the queue $3$ and read in the successors of $v_5=2$, which is none, so we end up with an empty queue. Then, we read in the successors of $v_6=4$, which are $1$ and $7$. And so on.

The queue length at time $k$ coincides with the number of cars that attempt to park at spot $k$ (whether successful or not), and the number of new vertices in the queue at time $k$ coincides with the number of cars $r_k$ whose first preference is spot $k$.
The time steps where the queue is empty correspond to the $k_i$, the unused spaces of the parking function.
In the example they are $5,10,12$.
These times satisfy (\ref{determine1}), and at times between $k_i$ and $k_{i+1}$ the $r_s$ satisfy
(\ref{determine2}). Note that (\ref{determine3}) follows by construction.
This completes the proof.

From Theorem \ref{map}, the corresponding generalized parking function is $\pi=(6, 1, 4, 1, 8, 3, 6, 11, 8)$. This parking function fills positions $[1,4],[6,9],[11]$.

\end{proof}

The above proof does not depend on using the BFS algorithm; any other algorithm which builds up a forest
one edge at a time through a sequence of growing subforests with the same roots will give an alternate bijection.
Generally, an algorithm checks the vertices of the forest one-by-one, starting with the ordered roots. At each step, we pick a new vertex and connect it to the checked vertices. The choice function (which defines the algorithm) tells us which new vertex to pick.

Another algorithm we will use below is ``BFS version II", which reads each rooted subtree in BFS order before moving on to
the next rooted subtree. In the example of Figure \ref{illustration1}, the BFS version II order is $01,2,4,1,7,02,03,6,04,3,5,9,8$.
Let us describe the queue procedure for version II in detail; see Figure \ref{illustration2}. The root vertices $v_{01}, v_{02}, v_{03}, v_{04}$ are implicitly in the queue (not recorded), but now they are not at the top of the queue, but interspersed. Explicitly, first the queue is empty, and we read in the successors of $v_{01}$, which are $2$ and $4$. Next, we remove the top element of the queue $2$ and read in the successors of $2$, which is none. Then, we remove the top element of the queue $4$ and read in the successors of $4$, which are $1$ and $7$. Then, we remove the top element of the queue $1$ and read in the successors of $1$, which is none. Then, we remove the top element of the queue $7$ and read in the successors of $7$, which is none, so we end up with an empty queue. Now that we have read the first subtree, we go on to the second subtree. We read in the successors of $v_{02}$, which is none, so we still have an empty queue. Then we move on to the third subtree. We read in the successors of $v_{03}$, which is $6$. And so on.

\old{
\begin{proof}
For a rooted forest $F\in \F(n+1, n+1-m)$, label the $n+1-m$ specified fixed roots respectively by $01, \dots, 0(n+1-m)$ and the $m$ non-root vertices respectively by $1, \dots, m$. \red{We order these linearly starting with
the root vertices.} If $(i, j)$ is an edge of a tree within $F$ and $i$ lies on the unique path connecting $j$ to the root of that tree, then we say that $i$ is the predecessor of $j$ and $j$ is a successor of $i$. The degree of a vertex $i$ in $F$ is the number of its successors, and a non-root vertex of degree $1$ is referred to as a leaf.

Let $\Pi$ be the set of all ordered pairs $(G, W)$ such that $G$ is a forest whose vertex set $V(G)$ is a subset of the $n+1$ vertices containing all of the $n+1-m$ specified roots, and $\emptyset \neq W \subseteq \text{Leaf}(G)$, where $\text{Leaf}(G)$ is the set of leaves of $G$. A choice function $\gamma$ is a function from $\Pi$ to the $m$ non-root vertices $1,\dots,m$ such that $\gamma(G, W) \in W$, and is \red{\emph{defined by a searching algorithm $S$} as follows}. Generally speaking, a searching algorithm $S$ takes a forest $F$ and gives a way to build up $F$ inductively vertex-by-vertex; within each tree, it always starts with the root in that tree. At each step of the searching algorithm, we add a new vertex that is either connected to the already existent vertices or is a new root. When we are not adding a new root, the choice function tells us what vertex to consider adding next.

Fix a choice function $\gamma$, \red{defined by a searching algorithm $S$, and fix a forest $F$}.
\sout{We define a linear order of the vertices $V(\gamma)=v_1, \dots, v_{n+1}$.} Let the searching algorithm traverse the set of roots first, so for $1\leq i\leq n+1-m$, set $v_{i}=0i$. Then for each $n+1-m<i \leq n+1$, assume $v_1, \dots, v_{i-1}$ are determined. Let $W_i=\{v: \text{the predecessor of }v \text{ is in }\{v_1, \dots, v_{i-1}\}\}$, and $G_i$ be the sub-forest obtained from $F$ by restricting to $W_i \cup \{v_1, \dots, v_{i-1}\}$. Then let $v_i=\gamma(G_i, W_i)$. For each $v_i$, order the successors of $v_i$ from small to large.
\red{This gives for each forest $F$ an order of its vertices.}

Let $\sigma_\gamma$ be obtained by reading the successors of $v_1$, followed by successors of $v_2$, and so on. Let $\sigma_\gamma^{-1}$ denote the inverse of $\sigma_\gamma \in \Sym_m$. Let $\vec{r}_\gamma=(r_1, \dots, r_{n})$, where $r_i$ is the number of successors of the vertex $v_i$. Finally, let $k_i$ record the number of traversed vertices of $F$ when this number is $i$ plus the combined number of successors of these vertices for the first time.

From the construction, it is clear that $\vec{r}_\gamma$ and $\sigma_\gamma^{-1}$ are compatible with each other. It is also straightforward to construct a forest $F$ given the pair $(\vec{r}_\gamma, \sigma_\gamma^{-1})$. We show that $k_i$'s, as defined above, must exist uniquely and satisfy the balance condition (\ref{determine}). See Figure \ref{bijection}. For $n\geq m+1$, any non-decreasing path from $(0,0)$ to $(n,m)$ must have at least one intersection with the points $(i, i-1)$ for $i=1,\dots, n$, and before the first intersection, the path lies strictly above the points. This first intersection is our $k_1$. After locating $k_1$, we examine a shortened non-decreasing path from $(k_1, k_1-1)$ to $(n, m)$. Following the same reasoning as above, for $n\geq m+2$, there must exist at least one intersection between this shortened path and the points $(i, i-2)$ for $i=k_1+1,\dots, n$, and before the first intersection, the path lies strictly above the points. This first intersection is our $k_2$. To argue the existence of the other $k_i$'s, we proceed inductively.

We have thus described a bijection $\phi_\gamma: F \rightarrow (\vec{r}_\gamma, \sigma_\gamma^{-1})$ from $\F(n+1, n+1-m)$ onto $\C(m, n)$. We note that this method of constructing a bijection depends on the specific choice function and searching algorithm that we choose, and so is in no way exclusive.
\end{proof}
}


\old{
For concreteness, our choice function $\gamma_{bfq}$ on the rooted spanning forest will be dictated by the breadth-first searching algorithm (BFS) with a queue (BFQ). We will examine two versions of the BFS procedure. This particular algorithm is chosen because it provides some nice correspondences with parking functions. Other graph searching algorithms should also work with minor adaptations. The order that the vertices are visited under the BFS is called the BFS order, and denoted by $<_{bfq, s}$, with $s=1, 2$ indicating the version of the BFS that is employed. Let $\text{level}(i)$ be the distance of a vertex $i$ from the root. We take all root vertices to be at the same level, linearly ordered as $01<_{bfq, s}\dots<_{bfq, s}0(n+1-m)$. For version I, non-root vertices $i<_{bfq, 1} j$ if (1) $\text{level}(i)<\text{level}(j)$, or (2) $\text{level}(i)=\text{level}(j)$ and $\text{pre}(i)<_{bfq, 1}\text{pre}(j)$, where $\text{pre}(i)$ is the predecessor of $i$, or (3) $\text{pre}(i)=\text{pre}(j)$ and $i<j$. For version II, any vertex of the tree rooted at $0i$ is less than any vertex of the tree rooted at $0j$ if $i<j$, and for non-root vertices in the same tree component, $i<_{bfq, 2} j$ iff $i<_{bfq, 1} j$. The choice function $\gamma_{bfq, s}(G, W)$ always chooses the minimal element of $W$ under the order $<_{bfq, s}$. Specifically, version I of the BFS exhausts the entire forest level by level from top to bottom, while version II does the search tree by tree from left to right. In practice, the BFS of the rooted forest starts with the ordered roots of the distinct trees, and is implemented by maintaining a queue that (implicitly) consists of $(01, \dots, 0(n+1-m))$ at time $0$. The root vertices are placed at the start of the queue in version I and interspersed between the non-root vertices in version II. Then, at each of the $n$ following time steps of the BFS, the \red{(nonroot)} vertex $x$ at the head of the queue is removed from the queue, and all successors of $x$ are added at the end of the queue, in increasing order.
}

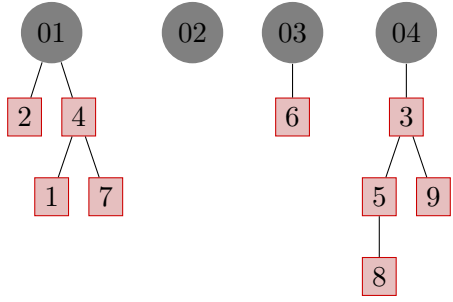
\begin{figure}
\begin{center}
\begin{forest}
[01, baseline, circle, fill={gray}[2, root color={red}][4, root color={red}[1, root color={red}][7, root color={red}]]]
\end{forest}
\quad
\begin{forest}
[02, baseline, circle, fill={gray}]
]
\end{forest}
\quad
\begin{forest}
[03, baseline, circle, fill={gray}[6, root color={red}]
]
\end{forest}
\quad
\begin{forest}[04, baseline, circle, fill={gray}[3, root color={red}[5, root color={red}[8, root color={red}]][9, root color={red}]]
]
\end{forest}
\caption{Rooted spanning forest.}\label{illustration1}
\end{center}
\end{figure}

\begin{figure}
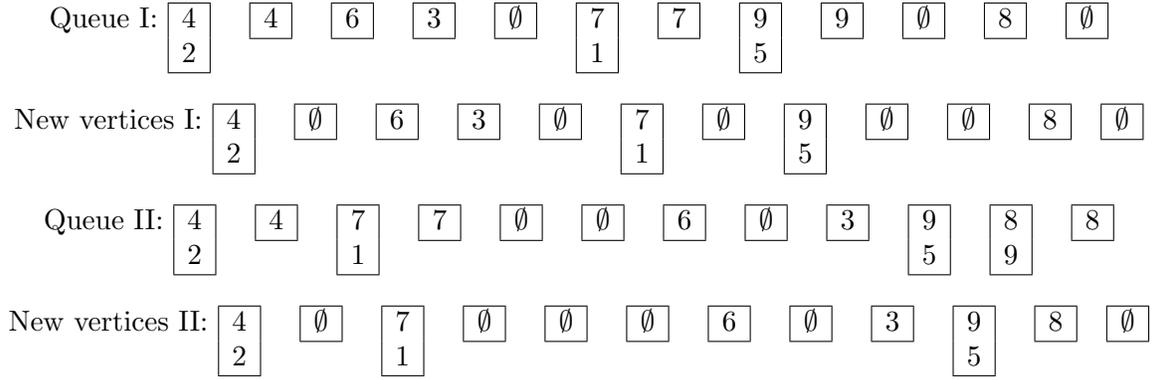

\begin{center}
Queue I:
\begin{tabular}[t]{|c|}\firsthline
4\\2\\\lasthline
\end{tabular}
\quad
\begin{tabular}[t]{|c|}\firsthline
4\\\lasthline
\end{tabular}
\quad
\begin{tabular}[t]{|c|}\firsthline
6\\\lasthline
\end{tabular}
\quad
\begin{tabular}[t]{|c|}\firsthline
3\\\lasthline
\end{tabular}
\quad
\begin{tabular}[t]{|c|}\firsthline
$\emptyset$\\\lasthline
\end{tabular}
\quad
\begin{tabular}[t]{|c|}\firsthline
7\\1\\\lasthline
\end{tabular}
\quad
\begin{tabular}[t]{|c|}\firsthline
7\\\lasthline
\end{tabular}
\quad
\begin{tabular}[t]{|c|}\firsthline
9\\5\\\lasthline
\end{tabular}
\quad
\begin{tabular}[t]{|c|}\firsthline
9\\\lasthline
\end{tabular}
\quad
\begin{tabular}[t]{|c|}\firsthline
$\emptyset$\\\lasthline
\end{tabular}
\quad
\begin{tabular}[t]{|c|}\firsthline
8\\\lasthline
\end{tabular}
\quad
\begin{tabular}[t]{|c|}\firsthline
$\emptyset$\\\lasthline
\end{tabular}

\vskip.1truein

New vertices I:
\begin{tabular}[t]{|c|}\firsthline
4\\2\\\lasthline
\end{tabular}
\quad
\begin{tabular}[t]{|c|}\firsthline
$\emptyset$\\\lasthline
\end{tabular}
\quad
\begin{tabular}[t]{|c|}\firsthline
6\\\lasthline
\end{tabular}
\quad
\begin{tabular}[t]{|c|}\firsthline
3\\\lasthline
\end{tabular}
\quad
\begin{tabular}[t]{|c|}\firsthline
$\emptyset$\\\lasthline
\end{tabular}
\quad
\begin{tabular}[t]{|c|}\firsthline
7\\1\\\lasthline
\end{tabular}
\quad
\begin{tabular}[t]{|c|}\firsthline
$\emptyset$\\\lasthline
\end{tabular}
\quad
\begin{tabular}[t]{|c|}\firsthline
9\\5\\\lasthline
\end{tabular}
\quad
\begin{tabular}[t]{|c|}\firsthline
$\emptyset$\\\lasthline
\end{tabular}
\quad
\begin{tabular}[t]{|c|}\firsthline
$\emptyset$\\\lasthline
\end{tabular}
\quad
\begin{tabular}[t]{|c|}\firsthline
8\\\lasthline
\end{tabular}\quad
\begin{tabular}[t]{|c|}\firsthline
$\emptyset$\\\lasthline
\end{tabular}

\vskip.1truein

Queue II:
\begin{tabular}[t]{|c|}\firsthline
4\\2\\\lasthline
\end{tabular}
\quad
\begin{tabular}[t]{|c|}\firsthline
4\\\lasthline
\end{tabular}
\quad
\begin{tabular}[t]{|c|}\firsthline
7\\1\\\lasthline
\end{tabular}
\quad
\begin{tabular}[t]{|c|}\firsthline
7\\\lasthline
\end{tabular}
\quad
\begin{tabular}[t]{|c|}\firsthline
$\emptyset$\\\lasthline
\end{tabular}
\quad
\begin{tabular}[t]{|c|}\firsthline
$\emptyset$\\\lasthline
\end{tabular}
\quad
\begin{tabular}[t]{|c|}\firsthline
6\\\lasthline
\end{tabular}
\quad
\begin{tabular}[t]{|c|}\firsthline
$\emptyset$\\\lasthline
\end{tabular}
\quad
\begin{tabular}[t]{|c|}\firsthline
3\\\lasthline
\end{tabular}
\quad
\begin{tabular}[t]{|c|}\firsthline
9\\5\\\lasthline
\end{tabular}
\quad
\begin{tabular}[t]{|c|}\firsthline
8\\9\\\lasthline
\end{tabular}
\quad
\begin{tabular}[t]{|c|}\firsthline
8\\\lasthline
\end{tabular}

\vskip.1truein

New vertices II:
\begin{tabular}[t]{|c|}\firsthline
4\\2\\\lasthline
\end{tabular}
\quad
\begin{tabular}[t]{|c|}\firsthline
$\emptyset$\\\lasthline
\end{tabular}
\quad
\begin{tabular}[t]{|c|}\firsthline
7\\1\\\lasthline
\end{tabular}
\quad
\begin{tabular}[t]{|c|}\firsthline
$\emptyset$\\\lasthline
\end{tabular}
\quad
\begin{tabular}[t]{|c|}\firsthline
$\emptyset$\\\lasthline
\end{tabular}
\quad
\begin{tabular}[t]{|c|}\firsthline
$\emptyset$\\\lasthline
\end{tabular}
\quad
\begin{tabular}[t]{|c|}\firsthline
6\\\lasthline
\end{tabular}
\quad
\begin{tabular}[t]{|c|}\firsthline
$\emptyset$\\\lasthline
\end{tabular}
\quad
\begin{tabular}[t]{|c|}\firsthline
3\\\lasthline
\end{tabular}
\quad
\begin{tabular}[t]{|c|}\firsthline
9\\5\\\lasthline
\end{tabular}
\quad
\begin{tabular}[t]{|c|}\firsthline
8\\\lasthline
\end{tabular}\quad
\begin{tabular}[t]{|c|}\firsthline
$\emptyset$\\\lasthline
\end{tabular}
\caption{Successive states of the queue (left to right) for the BFS algorithm, version I and version II.
For version II of the BFS algorithm, the queue is empty at times $5, 6, 8$, separating the $12$ available spots into disjoint segments of $[1, 4], [7], [9, 12]$, with respective length $4, 1, 4$. This coincides with the number of non-root vertices in the left-to-right trees of the rooted forest.}\label{illustration2}
\end{center}
\end{figure}

For BFS version I, as mentioned in the proof,
the queue length at time $k$ coincides with the number of cars $y_k(\pi)$ that attempt to park at spot $k$ (whether successful or not), and the number of new vertices in the queue at time $k$ coincides with the number of cars $r_k(\pi)$ whose first preference is spot $k$.
We have
\begin{equation}
y_k(\pi)=\left\{
           \begin{array}{ll}
             r_k(\pi) & \hbox{if $y_{k-1}(\pi)=0$,} \\
             y_{k-1}(\pi)-1+r_k(\pi) & \hbox{otherwise.} \\
           \end{array}
         \right.
\end{equation}
In particular, the number of times when the queue is empty corresponds with the number of spots that are never attempted by any car, and is also the number of tree components of the spanning forest minus one. This explains the balance condition at the beginning of this section. Upon further derivation, we have
\begin{equation}
y_j(\pi)=\#\{s: k_i <\pi_s \leq j\}-(j-k_i-1), \hspace{.2cm} \forall k_i<j \leq k_{i+1}, 0\leq i\leq n-m.
\end{equation}

\begin{proposition}\label{un2}
The number of parking functions $\pi \in \PF(m, n)$ subject to the constraint that fixed $n-m$ spots $k_1, \dots, k_{n-m}$ with $0:=k_0<k_1<\cdots<k_{n-m}<k_{n-m+1}:=n+1$ are unattempted by any car is given by
\begin{equation}
\prod_{i=0}^{n-m} (k_{i+1}-k_i)^{k_{i+1}-k_i-2} \binom{m}{k_1-k_0-1, \dots, k_{n-m+1}-k_{n-m}-1}.
\end{equation}
\end{proposition}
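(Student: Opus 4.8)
The plan is to exploit the fact, recorded in the queue recurrence for $y_k(\pi)$ displayed just above, that a spot is left empty precisely when it is never attempted, i.e. when the queue length $y_k(\pi)$ vanishes. Thus prescribing that $k_1,\dots,k_{n-m}$ are the unattempted spots amounts to prescribing $y_{k_i}(\pi)=0$ for each $i$ while $y_k(\pi)\ge 1$ for every other $k$. These $n-m$ empty spots cut $[n]$ into $n-m+1$ consecutive blocks
\[
B_i=\{k_i+1,\dots,k_{i+1}-1\},\qquad i=0,\dots,n-m,
\]
of lengths $\ell_i:=k_{i+1}-k_i-1$, and $\sum_{i=0}^{n-m}\ell_i=(n+1)-(n-m+1)=m$.

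First I would prove a decoupling lemma: the vanishing $y_{k_i}(\pi)=0$ severs the overflow dynamics at each boundary. Since $y_{k_i}(\pi)=0$ no car overflows from spot $k_i$ into $B_i$, and since $y_{k_{i+1}}(\pi)=0$ no car overflows out of $B_i$ past $k_{i+1}$; moreover $r_{k_j}=0$, so no car prefers an unattempted spot. Hence a car parks in $B_i$ if and only if its preference lies in $B_i$, with no overflow entering or leaving the block. As every spot of $B_i$ is filled (these spots are not among the empty ones), block $B_i$ must receive exactly $\ell_i$ cars, and the subsequence of $\pi$ formed by the preferences landing in $B_i$, shifted by $-k_i$ onto the index set $\{1,\dots,\ell_i\}$, is a classical parking function in $\PF(\ell_i,\ell_i)$. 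Conversely, given any assignment of the $m$ cars to the blocks with $B_i$ receiving $\ell_i$ cars, together with a classical parking function on each block, the reverse shift reassembles an element of $\PF(m,n)$: running the global process spot by spot, the cars of block $B_i$ fill $B_i$ and none overflow, so spot $k_{i+1}$ stays empty, the process restarts at $k_{i+1}+1$, all cars park, and the unattempted spots are exactly $k_1,\dots,k_{n-m}$. This gives a bijection between the parking functions being counted and such structured data. (The same count can be read off from Theorem \ref{construction} via ``BFS version II'', under which these parking functions correspond to forests in $\F(n+1,n+1-m)$ whose trees, in root order, have $\ell_0,\dots,\ell_{n-m}$ non-root vertices.)

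With the decoupling in hand the count is routine. Choosing which of the $m$ labeled cars go to each block contributes the multinomial coefficient $\binom{m}{\ell_0,\dots,\ell_{n-m}}$, and by Theorem \ref{number} the number of classical parking functions on a block of length $\ell_i$ is $|\PF(\ell_i,\ell_i)|=(\ell_i+1)^{\ell_i-1}$. Multiplying over blocks and substituting $\ell_i+1=k_{i+1}-k_i$ (hence $\ell_i-1=k_{i+1}-k_i-2$) gives
\[
\binom{m}{k_1-k_0-1,\dots,k_{n-m+1}-k_{n-m}-1}\prod_{i=0}^{n-m}(k_{i+1}-k_i)^{k_{i+1}-k_i-2},
\]
as claimed; the degenerate case $\ell_i=0$ of consecutive unattempted spots contributes the factor $1^{-1}=1$ consistently. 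The step I expect to be the crux is the decoupling lemma, namely making rigorous that an empty spot blocks overflow in both directions and that each restricted preference sequence is a genuine parking function of its subblock. The cleanest route is an induction on $k$ within each block using the recurrence $y_k(\pi)=y_{k-1}(\pi)-1+r_k(\pi)$ (with $y_k=r_k$ when $y_{k-1}=0$): one shows that for $k_i<k\le k_{i+1}$ the value $y_k(\pi)$ depends only on $r_{k_i+1},\dots,r_k$, which both proves the blocks evolve independently and, by matching $y_k(\pi)$ with the queue length of the shifted block, identifies the restricted sequence as a classical parking function through the empty-queue characterization.
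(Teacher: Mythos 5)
Your proof is correct and follows essentially the same route as the paper's: the unattempted spots decouple $\pi$ into $n-m+1$ non-interacting segments, each a classical parking function of length $k_{i+1}-k_i-1$ after translation, counted by $(k_{i+1}-k_i)^{k_{i+1}-k_i-2}$, with the multinomial coefficient accounting for the interleaving of the segments. Your queue-recurrence argument simply makes rigorous the ``non-interacting'' claim that the paper's terse proof takes for granted.
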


\begin{proof}
Recall that the number of classical parking functions of length $n$ is $(n+1)^{n-1}$. Since there are $n-m$ parking spots that are never attempted by any car, the parking function $\pi \in \PF(m, n)$ is separated into $n-m+1$ disjoint non-interacting segments (some segments might be empty), with each segment a classical parking function of length $(k_{i+1}-k_i-1)$ after translation. The multinomial coefficient then comes from considering different ways of permuting the segments.
\end{proof}

\subsection{One-to-one correspondence between parking functions and monomial terms in the Tutte polynomial}\label{Tutte}
Given a classical parking function $\pi \in \PF(n, n)$, there are two statistics that capture important features of $\pi$. The first one is the number of \textit{critical left-to-right maxima $\cm(\pi)$}. We say that a term $\pi_i=j$ is a left-to-right maximum if $\pi_s<j$ for all $s<i$, and we say that a term $\pi_i=j$ is critical if there are exactly $j-1$ terms less than $j$ and exactly $n-j$ terms greater than $j$ in $\pi$, so in particular, the entry $j$ is unique in $\pi$. For example, for $\pi=(4, 1, 1, 5, 3) \in \PF(5, 5)$, the subscripts $1$ and $4$ are critical left-to-right maxima, and $\cm(\pi)=2$. The second is the \textit{total displacement $\disp(\pi)$}, which corresponds to the total number of extra spaces cars are required to travel past their first preferred spot. For example, for $\pi=(1, 3, 5, 1, 3) \in \PF(5, 5)$, the parking outcome is $\outcome(\pi)=(1, 3, 5, 2, 4)$, and the displacement is $\disp(\pi)=2$. Depending on the context, displacement may be referred to differently as area, inconvenience, reversed sum, and so forth. It is associated with the number of linear probes in hashing functions (Knuth, \cite{Knuth1}), the number of inversions in labeled trees on $n+1$ vertices (Kreweras, \cite{Kr}), and the number of hyperplanes separating a given region from the base region in the extended Shi arrangement (Stanley, \cite{Stanley2}), to name a few.

Building on earlier work of Spencer \cite{Sp} and Chebikin and Pylyavskyy \cite{CP}, Kosti\'{c} and Yan \cite{KY} established an explicit expression of $T_{K_{n+1}}(x, y)$, the Tutte polynomial of complete graphs with $n+1$ vertices, in terms of the above two statistics of classical parking functions:
\begin{equation}\label{mono}
T_{K_{n+1}}(x, y)=\sum_{\pi \in \PF(n, n)} x^{\cm(\pi)}y^{\disp(\pi)}.
\end{equation}
There is also follow-up work from Chang et al. \cite{CMY}, who demonstrated the bijection between classical parking functions and monomial terms in the Tutte polynomial through an equivalent notion to critical maxima termed critical-bridge, avoiding any use of spanning trees. Their results imply that the number of internally active edges and the number of externally active edges of a spanning tree is equidistributed with the number of critical left-to-right maxima and the total displacement of the associated classical parking function.

In this section, we will extend the concept of critical left-to-right maxima and displacement for classical parking functions to generalized parking functions, and build a correspondence between generalized parking functions and monomial terms in the associated Tutte polynomial of the disjoint union of complete graphs. Displacement for a generalized parking function is defined analogously as for a classical parking function. For example, for $\pi=(1, 3, 5, 1) \in \PF(4, 5)$, the parking outcome $\outcome(\pi)=(1, 3, 5, 2)$, giving $\disp(\pi)=1$. The definition of
critical left-to-right maxima is more complicated. For a parking function $\pi \in \PF(m, n)$, as was observed in Section \ref{pf}, there are $n-m$ parking spots that are never attempted by any car. This separates $\pi$ into disjoint segments, and the parking preferences and hence outcomes are independent across different segments once the unattempted spots are fixed. We may view each segment of $\pi$ as a classical parking function by itself after translation, and consider the corresponding left-to-right maxima associated with the segment. Alternatively, we notice that the concept of critical left-to-right maxima is essentially built on the relative order between the coordinates, and not the coordinate value itself. For example, for $\pi=(4, 3, 5, 5) \in \PF(4, 6)$, the relative ordering of the coordinates is $\pi'=(2, 1,3, 3)$, which may equivalently be obtained through translation by $2$ from $\pi$, and the subscript $1$ is the only critical left-to-right maximum (thinking of $\pi'\in\PF(4,4)$), with $\cm(\pi)=1$. We define $\cm(\pi)$ of a generalized parking function to be the sum of the number of critical left-to-right maxima across all the segments. In like manner, $\disp(\pi)$ may also be considered as a sum of the total displacement across all the segments. Using the BFS algorithm version II, each translated segment of $\pi$ corresponds to a tree component in the rooted spanning forest (see Theorems \ref{map} and \ref{construction}), and also a monomial term in the Tutte polynomial (using (\ref{mono})). Since the trees are disjoint, $\pi$ then corresponds to the product of the respective Tutte mononomials.

We illustrate this bijection in more detail with the representative example from Section \ref{pf}.
We read the rooted spanning forest in Figure \ref{illustration1} using version II of the BFS algorithm:
$$v_{01}, v_1, \dots, v_4, v_{02}, v_{03}, v_5, v_{04}, v_6, \dots, v_9= 01,2,4,1,7,02,03,6,04,3,5,9,8.$$ We let $\sigma= 241763598$ be this vertex ordering once we remove the root vertices, and let $r_i$ record the number of successors of $v_i$, that is, $\vec r=(2,0,2,0,0,0,1,0,1,2,1,0)$. Now $\sigma^{-1}=316275498$ is compatible with $\vec r$, and the corresponding generalized parking function is $\pi=(3, 1, 9, 1, 10, 7, 3, 11, 10)$.

There are several equivalent ways for us to retrieve the different segments of $\pi$, corresponding to the non-root vertices within different tree components. Maybe the fastest approach is by realizing that spots $5, 6, 8$ are never attempted, thus giving $\pi_1=(3, 1, 1, 3)$ for tree $01$, tree $02$ consists of an isolated root only, $\pi_2=(7)$ for tree $03$, and $\pi_3=(9, 10, 11, 10)$ for tree $04$.
Then either translating the different segments of $\pi$ to classical parking functions, respectively $\pi'_1=(3, 1, 1, 3)$, $\pi'_2=(1)$, $\pi'_3=(1, 2, 3, 2)$, or using the generalized definition of critical left-to-right maxima and displacement introduced in the previous paragraph, we obtain the corresponding Tutte monomial $x^2y^4$ for the forest. We remark that if we replace the non-root vertices of the spanning forest with their respective ordering within the tree, then the translated segments $\pi'_i$ of $\pi_i$ may also be recovered directly using the correspondence established in Theorems \ref{map} and \ref{construction}. See Figure \ref{illustration3}.

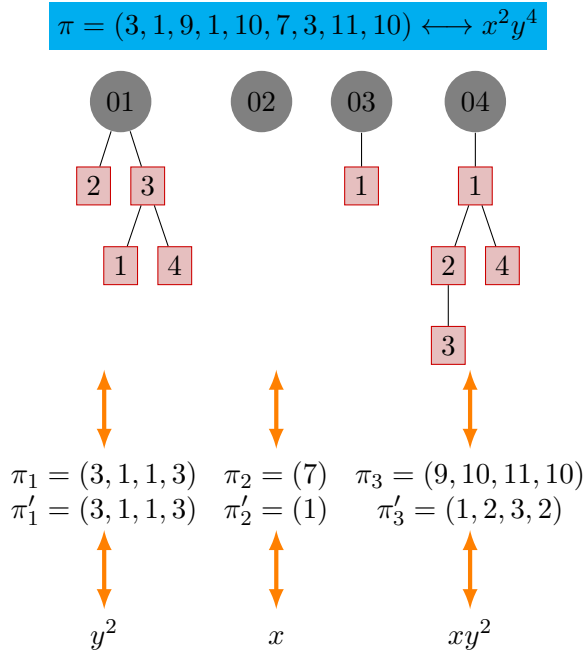
\begin{figure}
\begin{center}
\colorbox{cyan}{$\pi=(3, 1, 9, 1, 10, 7, 3, 11, 10)\longleftrightarrow x^2y^4$}\\
\vskip.1truein
\begin{forest}
[01, baseline, circle, fill={gray}[2, root color={red}][3, root color={red}[1, root color={red}][4, root color={red}]]]
\end{forest}
\quad
\begin{forest}
[02, baseline, circle, fill={gray}]
]
\end{forest}
\quad
\begin{forest}
[03, baseline, circle, fill={gray}[1, root color={red}]
]
\end{forest}
\quad
\begin{forest}[04, baseline, circle, fill={gray}[1, root color={red}[2, root color={red}[3, root color={red}]][4, root color={red}]]
]
\end{forest}

\begin{tabular}[t]{cccccccccccccc}
$\DownArrow[30pt][>=latex,orange, ultra thick]$ & $\DownArrow[30pt][>=latex,orange, ultra thick]$ & $\DownArrow[30pt][>=latex,orange, ultra thick]$ \\
$\pi_1=(3, 1, 1, 3)$ & $\pi_2=(7)$ & $\pi_3=(9, 10, 11, 10)$\\
$\pi'_1=(3, 1, 1, 3)$ & $\pi'_2=(1)$ & $\pi'_3=(1, 2, 3, 2)$\\
$\DownArrow[30pt][>=latex,orange, ultra thick]$ & $\DownArrow[30pt][>=latex,orange, ultra thick]$ & $\DownArrow[30pt][>=latex,orange, ultra thick]$ \\
$y^2$ & $x$ & $xy^2$
\end{tabular}
\caption{Bijection between parking function and monomial term of the Tutte polynomial. Non-root vertices of the spanning forest are replaced with their respective ordering within the tree.}\label{illustration3}
\end{center}
\end{figure}

For notational convenience, we denote by $s=n-m+1$ in the following theorems.

\begin{theorem}\label{main}
For any positive integer $s$ and any nonnegative integer $m$,
\begin{equation}
\sum_{x_1+\cdots+x_s=m} \binom{m}{x_1, \dots, x_s} T_{K_{x_1+1}}(x, y)\cdots T_{K_{x_s+1}}(x, y)=\sum_{\pi \in \PF(m, m+s-1)} x^{\cm(\pi)}y^{\disp(\pi)},
\end{equation}
where $x_i\geq 0$, $\forall 1\leq i\leq s$, and $\cm(\cdot)$ and $\disp(\cdot)$ for a parking function $\pi$ are defined as above.
\end{theorem}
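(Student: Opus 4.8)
The plan is to prove the identity by partitioning $\PF(m,m+s-1)$ according to the locations of its unattempted spots and factoring the weighted sum over the resulting segments. Recall from Section \ref{pf} that a parking function $\pi \in \PF(m,n)$ with $n = m+s-1$ has exactly $n-m = s-1$ spots never attempted by any car; writing these as $k_1 < \cdots < k_{s-1}$ and setting $k_0 := 0$, $k_s := n+1$, the $n$ spots split into $s$ consecutive blocks, the $i$-th block consisting of the $x_i := k_i - k_{i-1} - 1$ attempted spots strictly between $k_{i-1}$ and $k_i$, with $x_1 + \cdots + x_s = m$. Because consecutive blocks are separated by an unattempted spot, no car ever spills from one block into the next, so the subsequence of preferences landing in block $i$, translated down to start at $1$, is a genuine classical parking function $\pi'_i \in \PF(x_i, x_i)$.

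First I would turn this decomposition into a bijection. Fix a composition $(x_1, \dots, x_s)$ of $m$ into $s$ nonnegative parts; by the balance conditions (\ref{determine1})--(\ref{determine3}) this is precisely the data of a legal configuration of unattempted spots. To reconstruct $\pi$, I must record, for each of the $m$ coordinates of $\pi$, into which block its preference falls --- an ordered partition of the $m$ coordinate-positions into labeled parts of sizes $x_1, \dots, x_s$, of which there are $\binom{m}{x_1,\dots,x_s}$ --- together with a classical parking function $\pi'_i \in \PF(x_i, x_i)$ for each block. Conversely such data determine a unique $\pi \in \PF(m,n)$: the positions assigned to block $i$ receive the entries of $\pi'_i$ in their given order, shifted up by $k_{i-1}$. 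This is a bijection between $\PF(m,m+s-1)$ and the set of all such triples (composition, interleaving, tuple of per-block classical parking functions); equivalently, under BFS version II the $s$ blocks are exactly the $s$ rooted tree components of the associated forest in $\F(n+1,s)$.

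Next I would track the two statistics across the bijection. By the definitions of $\cm$ and $\disp$ for generalized parking functions given just above the theorem, both are additive over blocks, $\cm(\pi) = \sum_{i} \cm(\pi'_i)$ and $\disp(\pi) = \sum_{i} \disp(\pi'_i)$, the latter because a car preferring a spot in block $i$ parks within block $i$ and incurs there exactly the displacement of its translated copy. Since the interleaving merely distributes whole coordinates among blocks while preserving the relative order inside each block, it changes none of the $\pi'_i$ and hence neither statistic. Therefore the weighted sum factors:
\[
\sum_{\pi \in \PF(m,m+s-1)} x^{\cm(\pi)} y^{\disp(\pi)} = \sum_{x_1 + \cdots + x_s = m} \binom{m}{x_1, \dots, x_s} \prod_{i=1}^{s} \Bigl( \sum_{\pi'_i \in \PF(x_i, x_i)} x^{\cm(\pi'_i)} y^{\disp(\pi'_i)} \Bigr).
\]
Applying the classical identity (\ref{mono}) to each inner sum gives $T_{K_{x_i+1}}(x,y)$, with the empty block $x_i = 0$ contributing $T_{K_1}(x,y) = 1$, which is exactly the left-hand side of the theorem.

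I expect the one genuine obstacle to be the rigorous justification that $\disp$ is additive over blocks and invariant under the interleaving --- that is, confirming that cars in distinct blocks never interfere and that a car's displacement depends only on the relative configuration inside its own block. This is geometrically clear from the non-interaction of segments established in Section \ref{pf}, but it is the single point where one argues rather than merely unfolds a definition. Everything else --- the bookkeeping of empty blocks (matched by $T_{K_1} = 1$ and the empty classical parking function), and the check that each composition of $m$ corresponds to a valid unattempted-spot configuration --- is routine given the balance conditions.
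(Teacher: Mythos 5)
Your proposal is correct and is essentially the paper's own argument: the paper likewise decomposes $\pi\in\PF(m,m+s-1)$ into the $s$ segments cut out by the $s-1$ unattempted spots (equivalently, the tree components of the associated forest under BFS version II), uses the multinomial coefficient for the interleaving of cars among segments, the additivity of $\cm$ and $\disp$ over segments, and the classical identity (\ref{mono}) applied to each segment. The only cosmetic difference is that the paper states the factorization in the language of tree components of the spanning forest, while you work directly with the parking-function blocks; these are identified by the paper's own construction, so the proofs coincide.
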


\begin{proof}
The multinomial sum describes the different ways the tree components of the forest are built. Recall that the number of critical left-to-right maxima and total displacement both factor over the trees, and are non-interacting when the tree sizes are fixed. Extending (\ref{mono}), the statement then follows from the construction explained above.
\end{proof}

For a rooted spanning forest, an inversion is a pair $(i, j)$ for which $i<j$ are in the same tree component and $j$ lies on the unique path connecting the root to $i$. The next theorem connects the inversion enumerator of spanning forests with the displacement enumerator of generalized parking functions. Related work on spanning trees vs. classical parking functions may be found in Kreweras \cite{Kr} and Gessel and Wang \cite{GW}.

\begin{theorem}\label{inv-disp}
Take any positive integer $s$ and any nonnegative integer $m$. Define the displacement enumerator of parking functions with $m$ cars and $m+s-1$ spots as
\begin{equation}\label{displacement}
D_{m, s}(y)=\sum_{\pi \in \PF(m, m+s-1)} y^{\disp(\pi)},
\end{equation}
and the inversion enumerator of spanning forests with $m+s$ vertices and $s$ fixed roots (the
roots are taken to be the $s$ smallest vertices) as
\begin{equation}
I_{m ,s}(y)=\sum_{F \in \F(m+s, s)} y^{\inv(F)}.
\end{equation}
Then $D_{m, s}(y)=I_{m, s}(y)$.
\end{theorem}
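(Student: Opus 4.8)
The plan is to show that both enumerators equal the same multinomial sum of Tutte polynomials evaluated at $x=1$. On the displacement side this is immediate from Theorem \ref{main}: setting $x=1$ there kills the exponent $\cm(\pi)$, and the identity collapses to
\[
D_{m,s}(y)=\sum_{\pi\in\PF(m,m+s-1)}y^{\disp(\pi)}=\sum_{x_1+\cdots+x_s=m}\binom{m}{x_1,\dots,x_s}\prod_{t=1}^{s}T_{K_{x_t+1}}(1,y).
\]
So it remains to prove that $I_{m,s}(y)$ admits the identical factorization.

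For the inversion side I would decompose a forest $F\in\F(m+s,s)$ according to how its $m$ non-root vertices $\{s,\dots,m+s-1\}$ are distributed among the $s$ trees. Write $A_t$ for the set of non-root vertices lying in the tree rooted at the $t$th smallest root, and $x_t=|A_t|$, so that $x_1+\cdots+x_s=m$. The crucial point—and the very reason the roots are fixed to be the $s$ smallest vertices—is that within each tree the root is strictly smaller than every other vertex of that tree; consequently no inversion $(i,j)$ can have $j$ equal to a root, and every inversion is an inversion of a single restricted tree $F_t$ occurring among the vertices of $A_t$. Hence $\inv(F)=\sum_{t}\inv(F_t)$, and since an inversion depends only on the relative order of the labels involved, the inversion enumerator of a single rooted tree on $\{\text{root}\}\cup A_t$ depends only on $x_t$ and equals $I_{x_t,1}(y)$. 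Summing over the ordered partitions of the label set into blocks of sizes $x_1,\dots,x_s$ contributes the multinomial coefficient, giving
\[
I_{m,s}(y)=\sum_{x_1+\cdots+x_s=m}\binom{m}{x_1,\dots,x_s}\prod_{t=1}^{s}I_{x_t,1}(y).
\]

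To finish I would identify the single-tree factors. The $s=1$ instance of the displacement computation above reads $T_{K_{k+1}}(1,y)=\sum_{\pi\in\PF(k,k)}y^{\disp(\pi)}$, which, by Kreweras's theorem \cite{Kr} equating the displacement of classical parking functions with the number of inversions of rooted trees on $k+1$ vertices, is exactly $I_{k,1}(y)$. Substituting $I_{x_t,1}(y)=T_{K_{x_t+1}}(1,y)$ into the previous display makes the two factorizations coincide, whence $D_{m,s}(y)=I_{m,s}(y)$.

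I expect the main obstacle to be the combinatorial bookkeeping in the inversion factorization: one must verify carefully that fixing the roots as the smallest vertices forces every inversion to remain inside a single tree and to avoid the roots altogether, and that the per-tree enumerator is genuinely label-independent (an order-isomorphism argument identifying it with $I_{x_t,1}(y)$). Everything else reduces to Theorem \ref{main} together with the classical identity $T_{K_{k+1}}(1,y)=I_{k,1}(y)$. Alternatively, one could bypass the generating-function route and construct a direct bijection carrying displacement to inversions by extending Knuth's method, processing each segment/tree independently; this is the construction sketched at the end of Section \ref{comb}, but the factorization argument above is shorter given Theorem \ref{main}.
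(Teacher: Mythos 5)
Your proposal is correct and follows essentially the same route as the paper: set $x=1$ in Theorem \ref{main}, factor the forest inversion enumerator over its tree components via the multinomial decomposition, and close the gap with the classical single-tree identity (the paper cites Mallows--Riordan \cite{MR} for $T_{K_{n+1}}(1,y)=\sum_{F\in\F(n+1,1)}y^{\inv(F)}$, while you route through Kreweras \cite{Kr}; these are equivalent). Your only addition is spelling out the bookkeeping the paper leaves implicit---that inversions cannot involve roots, stay within a single tree, and are label-independent---which is a worthwhile but not structurally different elaboration.
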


\begin{proof}
Setting $x=1$ in Theorem \ref{main}, we have
\begin{multline}
\sum_{\pi \in \PF(m, m+s-1)} y^{\disp(\pi)}=\sum_{x_1+\cdots+x_s=m} \binom{m}{x_1, \dots, x_s} T_{K_{x_1+1}}(1, y)\cdots T_{K_{x_s+1}}(1, y)\\
=\sum_{x_1+\cdots+x_s=m} \binom{m}{x_1, \dots, x_s} \sum_{F_1 \in \F(x_1+1, 1)}y^{\inv(F_1)}\cdots \sum_{F_s \in \F(x_s+1, 1)}y^{\inv(F_s)}
=\sum_{F \in \F(m+s, s)} y^{\inv(F)},
\end{multline}
where the last equality takes into consideration the different ways of constructing a forest with fixed roots and a specified number of non-root vertices and factors the inversion number of the forest across its tree components, and the second-to-last equality uses the correspondence between Tutte polynomial on a complete graph with $n+1$ vertices and the inversion enumerator of spanning trees with $n+1$ vertices (i.e. a one-rooted forest),
\begin{equation}\label{MR}
\sum_{F \in \F(n+1, 1)}y^{\inv(F)}=T_{K_{n+1}}(1, y)=\sum_{\pi \in \PF(n, n)} y^{\disp(\pi)},
\end{equation}
established in Mallows and Riordan \cite{MR}.
\end{proof}

\begin{corollary}
For any positive integer $s$ and any nonnegative integer $m$,
\begin{equation}
\sum_{x_1+\cdots+x_s=m} \binom{m}{x_1, \dots, x_s} (x_1+1)^{x_1-1}\cdots(x_s+1)^{x_s-1}=s(m+s)^{m-1},
\end{equation}
where $x_i\geq 0$, $\forall 1\leq i\leq s$.
\end{corollary}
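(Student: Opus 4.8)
The plan is to specialize the identity in Theorem \ref{main} by setting $x=y=1$. With this substitution the right-hand side of Theorem \ref{main} collapses to $\sum_{\pi \in \PF(m, m+s-1)} 1 = |\PF(m, m+s-1)|$, while the left-hand side becomes the stated multinomial sum once each factor $T_{K_{x_i+1}}(1,1)$ is identified.

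First I would recall the standard evaluation of the Tutte polynomial at $(1,1)$: for any graph $G$, $T_G(1,1)$ equals the number of spanning trees of $G$. For the complete graph $K_{x_i+1}$ this count is $(x_i+1)^{x_i-1}$ by Cayley's formula (equivalently, it equals $|\F(x_i+1,1)|$, the number of classical parking functions of length $x_i$, which is the $y=1$ case of (\ref{MR})). In particular the degenerate term $x_i=0$ gives $(x_i+1)^{x_i-1}=1^{-1}=1$, correctly counting the single tree on one vertex. Substituting these evaluations, the left-hand side of Theorem \ref{main} at $x=y=1$ becomes exactly $\sum_{x_1+\cdots+x_s=m}\binom{m}{x_1,\dots,x_s}(x_1+1)^{x_1-1}\cdots(x_s+1)^{x_s-1}$.

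It then remains only to evaluate the right-hand side. I would apply Theorem \ref{number} with $n=m+s-1$, so that $n-m+1=s$ and $n+1=m+s$, yielding $|\PF(m,m+s-1)|=s(m+s)^{m-1}$. Equating the two sides gives the claimed identity.

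I do not anticipate a genuine obstacle; the only points requiring care are the two ``black-box'' facts invoked ($T_G(1,1)$ counts spanning trees, and Cayley's formula), together with the convention $1^{-1}=1$ governing the $x_i=0$ terms. For intuition one may also read the identity purely bijectively: the left side enumerates the forests of $\F(m+s,s)$ by first distributing the $m$ non-root labels among the $s$ rooted trees (the multinomial coefficient) and then choosing a labeled tree on each block (the Cayley factors), whereas the right side counts $|\F(m+s,s)|=|\PF(m,m+s-1)|$ directly via Theorem \ref{construction} and Theorem \ref{number}. This is precisely the $y=1$ specialization of the chain of equalities already used in the proof of Theorem \ref{inv-disp}.
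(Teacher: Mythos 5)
Your proposal is correct and follows essentially the same route as the paper: both set $x=y=1$ in Theorem \ref{main} and recognize the two sides as counting the spanning forests $\F(m+s,s)$, with the paper leaving implicit the details you spell out (the evaluation $T_{K_{x_i+1}}(1,1)=(x_i+1)^{x_i-1}$ via Cayley's formula and the count $|\PF(m,m+s-1)|=s(m+s)^{m-1}$ from Theorem \ref{number}).
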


\begin{proof}
This follows immediately when we set $x=y=1$ in Theorem \ref{main}. We recognize that both sides of the equation represent the number of spanning forests with $m+s$ vertices and $s$ distinct trees with specified roots.
\end{proof}

Taking $k$th order derivatives of (\ref{MR}) and setting $y=1$, we have
\begin{equation}
\sum_{\pi \in \PF(n, n)}\binom{\disp(\pi)}{k}=\frac{1}{k!}\frac{d^k}{dy^k}\left. T_{K_{n+1}}(1, y)\right\vert_{y=1}.
\end{equation}
In Janson et al. \cite{JKLP}, it was shown that the sum of $\binom{\disp(\pi)}{k}$ taken over all classical parking functions of length $n$ is equal to the total number of connected graphs with $n+k$ edges on $n+1$ labeled vertices. Building upon Theorem \ref{main}, we will extend this connection between classical parking functions and connected graphs to one that involves generalized parking functions and disjoint union of connected graphs.

\begin{theorem}\label{disp}
The sum of $\binom{\disp(\pi)}{k}$ taken over all parking functions $\pi \in \PF(m, m+s-1)$ is equal to the total number of graphs with $m+k$ edges on $m+s$ labeled vertices such that the graph has $s$ disjoint components with a specified vertex belonging to each component.
\end{theorem}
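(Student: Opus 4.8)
The plan is to reduce everything to the classical case (Janson et al.~\cite{JKLP}) component by component, exploiting the product structure of the displacement enumerator that already surfaced in the proof of Theorem~\ref{inv-disp}. Writing $D_{m,s}(y)=\sum_{\pi\in\PF(m,m+s-1)}y^{\disp(\pi)}$ and setting $x=1$ in Theorem~\ref{main} gives the factorization
\begin{equation}
D_{m,s}(y)=\sum_{x_1+\cdots+x_s=m}\binom{m}{x_1,\dots,x_s}\prod_{i=1}^s T_{K_{x_i+1}}(1,y),
\end{equation}
where each factor $T_{K_{x_i+1}}(1,y)=\sum_{\pi\in\PF(x_i,x_i)}y^{\disp(\pi)}$ is the classical displacement enumerator on $x_i$ cars by (\ref{MR}). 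The target quantity is $\sum_\pi\binom{\disp(\pi)}{k}=\tfrac{1}{k!}D_{m,s}^{(k)}(1)$, so I would differentiate this product $k$ times and evaluate at $y=1$.

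First I would record the classical input. Let $c_n(k)$ denote the number of connected graphs on $n+1$ labeled vertices with $n+k$ edges. By the identity displayed just before the theorem together with \cite{JKLP},
\begin{equation}
\frac{1}{k!}\frac{d^k}{dy^k}T_{K_{n+1}}(1,y)\Big|_{y=1}=\sum_{\pi\in\PF(n,n)}\binom{\disp(\pi)}{k}=c_n(k),
\end{equation}
valid also in the degenerate case $n=0$ (where $T_{K_1}(1,y)=1$, $c_0(0)=1$, $c_0(k)=0$ for $k\ge1$). Applying the Leibniz rule to the product in the first display, the multinomial coefficients $\binom{k}{k_1,\dots,k_s}$ cancel against the factors $1/k_i!$, and substituting the classical identity into each factor yields
\begin{equation}
\sum_{\pi\in\PF(m,m+s-1)}\binom{\disp(\pi)}{k}=\sum_{x_1+\cdots+x_s=m}\binom{m}{x_1,\dots,x_s}\sum_{k_1+\cdots+k_s=k}\prod_{i=1}^s c_{x_i}(k_i).
\end{equation}

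Finally I would identify the right side with the graph count. Fix the $s$ specified vertices among the $m+s$ labels. A graph of the prescribed type is assembled by partitioning the remaining $m$ vertices into $s$ labeled blocks of sizes $x_1,\dots,x_s$ (block $i$ joined to specified vertex $v_i$), giving the factor $\binom{m}{x_1,\dots,x_s}$, and then placing on each component, which carries $x_i+1$ vertices, a connected graph with $x_i+k_i$ edges, giving the factor $c_{x_i}(k_i)$; connectivity forces each component to contain exactly one specified vertex, hence exactly $s$ components, and the total edge count $\sum_i(x_i+k_i)=m+k$ forces $\sum_i k_i=k$. This matches the summation above term by term. I expect the main obstacle to lie not in a single computation but in step (iii): verifying that the ``$s$-rooted-component'' graph count factors exactly as (multinomial)\,$\times$\,(product of connected-graph counts), i.e.\ that summing over all distributions of the non-specified vertices and all admissible edge splits reproduces precisely the convolution produced by the derivative. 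The bookkeeping of the Leibniz multinomial coefficients and the correct treatment of empty components ($x_i=0$, an isolated specified vertex with $c_0(0)=1$) are the places where care is needed.
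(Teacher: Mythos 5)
Your proof is correct and takes essentially the same route as the paper's: set $x=1$ in Theorem~\ref{main}, differentiate $k$ times at $y=1$ via the Leibniz rule, apply the Janson et al.\ connected-graph identity to each factor $T_{K_{x_i+1}}(1,y)$, and interpret the resulting multinomial convolution as assembling the $s$ rooted components. The additional care you take with the degenerate case $x_i=0$ and with verifying that the graph count factors as (multinomial) $\times$ (product of connected-graph counts) only makes explicit what the paper's proof states in a sentence.
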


\begin{proof}
Taking $k$th order derivatives in Theorem \ref{main} and setting $y=1$, we have
\begin{equation}\label{last}
\sum_{\pi \in \PF(m, m+s-1)} \binom{\disp(\pi)}{k}=\sum_{x_1+\cdots+x_s=m} \binom{m}{x_1, \dots, x_s} \sum_{t_1+\cdots+t_s=k}\prod_{i=1}^s \frac{1}{(t_i)!}\frac{d^{t_i}}{dy^{t_i}}\left.T_{K_{x_i+1}}(1, y)\right\vert_{y=1}.
\end{equation}
As was explained above,
\begin{equation}
\frac{1}{(t_i)!}\frac{d^{t_i}}{dy^{t_i}}\left.T_{K_{x_i+1}}(1, y)\right\vert_{y=1}
\end{equation}
gives the total number of connected graphs with $x_i+t_i$ edges on $x_i+1$ labeled vertices. The conclusion follows when we note that the multinomial sum on the right of (\ref{last}) takes into account the different ways of adding $m$ labeled vertices and $k$ additional edges to the disjoint components that each contains one of the $s$ specified vertices at the start.
\end{proof}

\subsection{Another bijective construction}
As nice as it is, there is one disadvantage of the bijective correspondence established using the BFS algorithm in Section \ref{pf} between generalized parking functions and rooted spanning forests: The total displacement of a parking function $\pi$ is not necessarily equal to the inversion number of its associated spanning forest $F$. This shortcoming may be remedied by extending a bijective construction in Knuth \cite[Section 6.4]{Knuth2} between classical parking functions and rooted spanning trees. Building upon Knuth's auxiliary tree procedure, we briefly explain our auxiliary forest procedure below, again with the representative example from Section \ref{pf}.

\begin{figure}
\begin{center}
\colorbox{cyan}{$\pi=(3, 1, 9, 1, 10, 7, 3, 11, 10)$}\\

\vskip.1truein

Auxiliary Forest:
\begin{forest}
[01, baseline, circle, fill={gray}[4, root color={red}[1, root color={red}][3, root color={red}[2, root color={red}]]]]
\end{forest}
\quad
\begin{forest}
[02, baseline, circle, fill={gray}]
]
\end{forest}
\quad
\begin{forest}
[03, baseline, circle, fill={gray}[1, root color={red}]
]
\end{forest}
\quad
\begin{forest}[04, baseline, circle, fill={gray}[4, root color={red}[3, root color={red}[2, root color={red}[1, root color={red}]]]]]
]
\end{forest}

\vskip.1truein

Copy of Auxiliary Forest:
\begin{forest}
[01, baseline, circle, fill={gray}[2, root color={red}[1, root color={red}][4, root color={red}[3, root color={red}]]]]
\end{forest}
\quad
\begin{forest}
[02, baseline, circle, fill={gray}]
]
\end{forest}
\quad
\begin{forest}
[03, baseline, circle, fill={gray}[1, root color={red}]
]
\end{forest}
\quad
\begin{forest}[04, baseline, circle, fill={gray}[3, root color={red}[1, root color={red}[2, root color={red}[4, root color={red}]]]]]
]
\end{forest}

\vskip.1truein

Final Forest:
\begin{forest}
[01, baseline, circle, fill={gray}[2, root color={red}[1, root color={red}][7, root color={red}[4, root color={red}]]]]
\end{forest}
\quad
\begin{forest}
[02, baseline, circle, fill={gray}]
]
\end{forest}
\quad
\begin{forest}
[03, baseline, circle, fill={gray}[6, root color={red}]
]
\end{forest}
\quad
\begin{forest}[04, baseline, circle, fill={gray}[8, root color={red}[3, root color={red}[5, root color={red}[9, root color={red}]]]]]
]
\end{forest}
\caption{Rooted spanning forest constructed via an auxiliary forest. The total displacement of the parking function coincides with the inversion number of the forest.}\label{illustration4}
\end{center}
\end{figure}
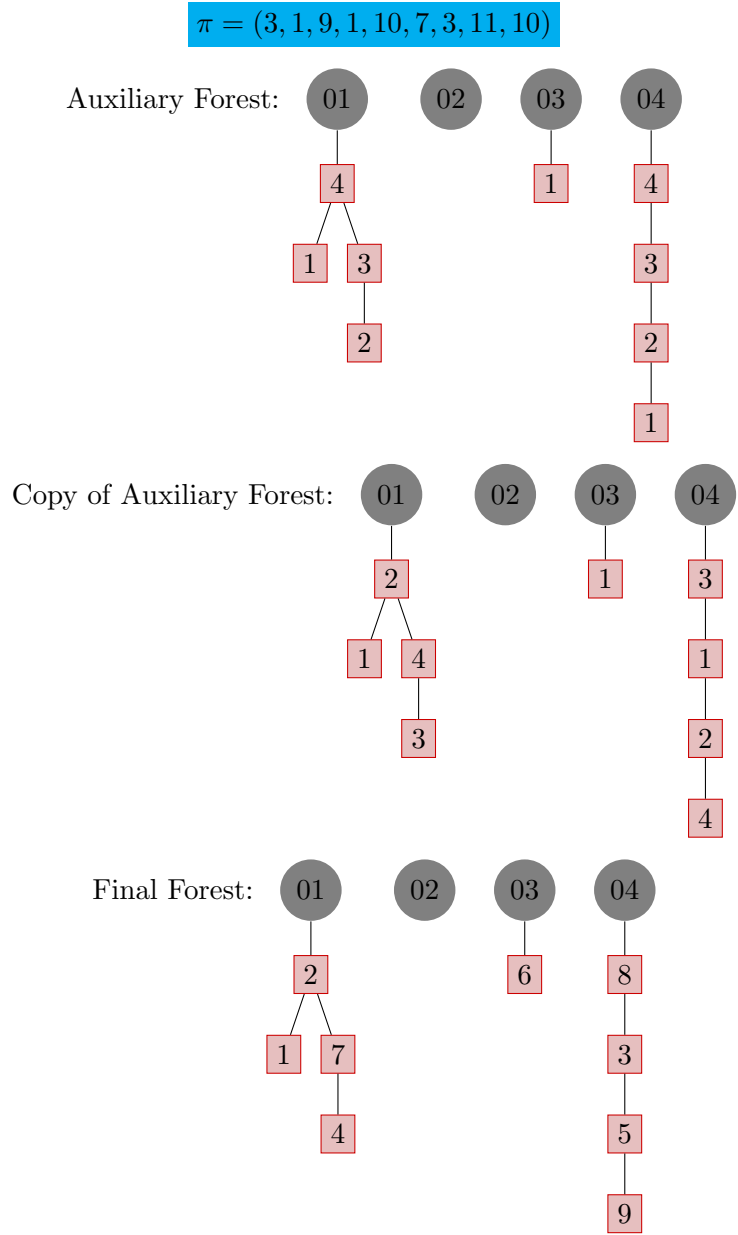

Let $\pi \in \PF(m ,n)$ be a parking function with parking outcome $\outcome(\pi)$. Recall that $\pi$ and $\outcome(\pi)$ may be decomposed into disjoint segments, with each segment by itself a classical parking function (respectively, parking outcome) after translation. We denote the translated segment of the parking function corresponding to tree $j$ by $\pi'_j$ and the outcome by $\outcome(\pi'_j)$, and further denote the inverse permutation of $\outcome(\pi'_j)$ by $\tau_j=((\tau_{j})_1, \dots, (\tau_{j})_{l_j})$, where $l_j$ is the length of $\tau_j$. For our example, using BFS version II, $\pi'_1=(3, 1, 1, 3)$, $\pi'_2=(1)$, $\pi'_3=(1, 2, 3, 2)$, with respective parking outcome $\outcome(\pi'_1)=(3, 1, 2, 4)$, $\outcome(\pi'_2)=(1)$, $\outcome(\pi'_3)=(1, 2, 3, 4)$, and inverse $\tau_1=(2, 3, 1, 4)$, $\tau_2=(1)$, $\tau_3=(1, 2, 3, 4)$. For each tree $j$, we construct an auxiliary tree by letting the predecessor of vertex $k$ be the first element on the right of $k$ and larger than $k$ in $\tau_j$; if there is no such element, let the predecessor be the root. So for our first tree in the example, the predecessor of $2$ is $3$, the predecessors of $1$ and $3$ are both $4$, while the predecessor of $4$ is the root. For our second tree, the predecessor of $1$ is the root. For our third tree, the predecessor of $1$ is $2$, the predecessor of $2$ is $3$, the predecessor of $3$ is $4$, and the predecessor of $4$ is the root. Then make a copy of the auxiliary tree and relabel the non-root vertices of the new tree by proceeding as follows, in preorder (i.e. any vertex is processed before its successors): If the label of the current vertex was $k$ in the auxiliary tree, swap its current label with the label that is currently $(1+(\outcome(\pi'_{j}))_k-(\pi'_{j})_k)$th smallest in its subtree. Hence for our first tree in the example, we swap the values of $4$ and $2$ and then swap the values of $3$ and $4$. For our second tree, no action is needed. For our third tree, we swap the values of $4$ and $3$ and then swap the values of $4$ and $1$. This produces a tree whose non-root vertices are labeled from $1$ through $l_j$, and the inversion number of the tree matches the displacement of the corresponding segment of the parking function $\pi_j$. Finally, we replace the labels of the non-root vertices of the forest with the correct labels under the BFS algorithm (recall that they may be read from the inverse of the order permutation associated with the parking function $\pi$), and ensure that relative ordering within each tree is preserved. For our example, $\sigma= 241763598$, we break it into $2417$, $6$, $3598$ and assign these labels to the respective trees. Here again, we use the essential idea in Section \ref{Tutte} that the relative order is what matters, not the vertex label itself. See Figure \ref{illustration4} for a graphical illustration and compare with Figure \ref{illustration1}. This auxiliary tree (hence forest) procedure may be reversed to recover the parking function from the final tree (hence forest); for details see Knuth \cite{Knuth2} and Yan \cite{Yan}. We also note that the critical left-to-right maxima of the parking function may as well be retrieved from the forest, though the process is much less straightforward than for the displacement and will involve simultaneously comparing the auxiliary forest and the final forest.

In addition to explicitly relating the displacement to the inversion number, the above bijection between generalized parking functions and rooted spanning forests has another nice property: Every vertex whose label is the smallest among all the vertices of the subtree rooted at this vertex in the final forest corresponds to a car that successfully parks at its first preference. These are called \textit{lucky} cars and have generated much interest. For example, there are $6$ such cars in Figure \ref{illustration4}, by either reading from the final forest or directly retrieving information from the parking function. Let $L(\pi)$ denote the number of lucky cars. The generating function for $L(\pi)$ was found in Gessel and Seo \cite{GS}, which we adapt to our setting:
\begin{equation}\label{lucky}
\sum_{\pi \in \PF(m, n)} q^{L(\pi)}=(n-m+1)q\prod_{i=1}^{m-1} (i+(n-i+1)q).
\end{equation}
By elementary probability, we can already deduce some interesting statistics for $L(\pi)$. Dividing both sides of (\ref{lucky}) by $(n-m+1)(n+1)^{m-1}$, we recognize the right side as the generating function of $S(m, n)=1+\sum_{i=1}^{m-1} X_i$, with $X_i$ independent,
\begin{equation}
X_i=\left\{
      \begin{array}{ll}
        0, & \hbox{probability $i/(n+1)$,} \\
        1, & \hbox{probability $1-i/(n+1)$.}
      \end{array}
    \right.
\end{equation}
If $m\sim cn$ for some $0<c \leq 1$, then the mean and variance of $S(m, n)$ are
\begin{equation}
\mu=1+\sum_{i=1}^{m-1}\left(1-\frac{i}{n+1}\right) \sim \frac{c(2-c)}{2}n.
\end{equation}
\begin{equation}
\sigma^2=\sum_{i=1}^{m-1} \frac{i}{n+1}\left(1-\frac{i}{n+1}\right) \sim \frac{c^2(3-2c)}{6}n.
\end{equation}
The central limit theorem applies and gives the following.

\begin{theorem}
Take $m$ and $n$ large with $m \sim cn$ for some $0<c \leq 1$. Consider parking function $\pi$ chosen uniformly at random from $\PF(m, n)$. Let $L(\pi)$ be the number of lucky cars. Then for any fixed $x$,
\begin{equation}
\PR\left(\frac{L(\pi)-\frac{c(2-c)}{2}n}{\sqrt{\frac{c^2(3-2c)}{6}n}}\leq x\right) \sim \Phi(x),
\end{equation}
where $\Phi(x)$ is the distribution function of the standard normal.
\end{theorem}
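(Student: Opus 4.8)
The plan is to exploit the factorization of the probability generating function already established in the excerpt and then invoke a central limit theorem for a triangular array of independent indicators. Dividing (\ref{lucky}) by $|\PF(m,n)|=(n-m+1)(n+1)^{m-1}$ shows that, under the uniform law on $\PF(m,n)$, the statistic $L(\pi)$ has the same distribution as $S(m,n)=1+\sum_{i=1}^{m-1}X_i$, where the $X_i$ are independent with $\PR(X_i=1)=1-i/(n+1)$ and $\PR(X_i=0)=i/(n+1)$. Writing $p_i=1-i/(n+1)$, the exact mean and variance are $\mu=1+\sum_{i=1}^{m-1}p_i$ and $\sigma^2=\sum_{i=1}^{m-1}p_i(1-p_i)$, whose leading asymptotics $\mu\sim\frac{c(2-c)}{2}n$ and $\sigma^2\sim\frac{c^2(3-2c)}{6}n$ are the ones computed just before the statement. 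The whole problem thereby reduces to a CLT for $\sum_{i=1}^{m-1}(X_i-p_i)$.

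For this I would apply the Lindeberg--Feller theorem to the row-independent, centered triangular array $Y_i^{(n)}=(X_i-p_i)/\sigma$, $1\le i\le m-1$, which has total variance $1$. The key observation is that each summand is bounded, $|X_i-p_i|\le 1$, so $|Y_i^{(n)}|\le 1/\sigma$. Since $\sigma^2\to\infty$ (it grows linearly in $n$), for any fixed $\varepsilon>0$ we have $1/\sigma<\varepsilon$ for all large $n$, whence $\mathbf{1}\{|Y_i^{(n)}|>\varepsilon\}$ vanishes for every $i$ and the Lindeberg sum is identically zero once $n$ is large. Lindeberg's condition thus holds trivially, yielding $\sum_{i=1}^{m-1}(X_i-p_i)/\sigma\weakto N(0,1)$, i.e. $(L(\pi)-\mu)/\sigma\weakto N(0,1)$. (Equivalently one may check Lyapunov's third-moment condition, since $\sigma^{-3}\sum_i\ER|X_i-p_i|^3\le(m-1)/\sigma^3=O(n^{-1/2})$.)

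It remains to replace the exact centering and scaling by the closed forms in the statement. The scaling substitution is immediate, since $\sigma^2\big/\big(\tfrac{c^2(3-2c)}{6}n\big)\to 1$, so dividing by $\sqrt{\frac{c^2(3-2c)}{6}n}$ instead of by $\sigma$ does not alter the limit. For the centering one needs $\mu=\frac{c(2-c)}{2}n+o(\sigma)$, that is, that replacing $\mu$ by $\frac{c(2-c)}{2}n$ introduces an error negligible on the scale $\sqrt n$; from the exact formula $\mu=m-\frac{m(m-1)}{2(n+1)}$ this holds once $m=cn+o(\sqrt n)$, which is the precise sense in which $m\sim cn$ must be read. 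Slutsky's lemma then converts convergence with the exact normalization into the stated convergence of distribution functions to $\Phi$. The genuinely delicate point is this substitution of the centering constant; the CLT itself is routine precisely because the summands are uniformly bounded while the variance diverges.
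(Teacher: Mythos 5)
Your proposal is correct and follows the same route as the paper: both divide the Gessel--Seo generating function (\ref{lucky}) by $|\PF(m,n)|$ to identify $L(\pi)$ in distribution with $S(m,n)=1+\sum_{i=1}^{m-1}X_i$ for independent Bernoulli variables $X_i$, compute $\mu\sim\frac{c(2-c)}{2}n$ and $\sigma^2\sim\frac{c^2(3-2c)}{6}n$, and conclude by the central limit theorem. Your write-up simply makes explicit what the paper leaves as ``the central limit theorem applies'' --- the Lindeberg/Lyapunov verification for the bounded triangular array, the Slutsky step, and the observation that the fixed centering constant requires reading $m\sim cn$ as $m=cn+o(\sqrt n)$ --- all of which are the standard details behind the paper's one-line conclusion.
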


\section{Properties of random parking functions}\label{random}

\subsection{Parking function shuffle}\label{pfs}

Continuing with the last topic of Section \ref{comb}, let us delve deeper into the properties of random parking functions. Through a parking function shuffle construction, we will derive the distribution for single coordinates of a parking function $\pi \in \PF(m, n)$ chosen uniformly at random. Recall that at the beginning of Section \ref{comb}, we explained that parking functions enjoy a nice symmetry: They are invariant under the action of $\Sym_m$ by permuting cars. We will write our results in terms of $\pi_1$ for explicitness, but due to this permutation symmetry, they may be interpreted for any coordinate $\pi_i$. Temporarily fix $\pi_2, \dots, \pi_m$. Let
\begin{equation}
A_{\pi_2, \dots, \pi_m}=\{j: (j, \pi_2, \dots, \pi_m)\in \PF(m, n)\}.
\end{equation}
From the parking scheme, if $k\in A_{\pi_2, \dots, \pi_m}$, then $j\in A_{\pi_2, \dots, \pi_m}$ for all $1\leq j\leq k$, so either $A_{\pi_2, \dots, \pi_m}=[k]$ for some fixed value of $k$ or is empty. This implies that given the last $m-1$ parking preferences, it is sufficient to identify the largest feasible first preference (if exists).

\begin{lemma}\label{nonempty}
If $A_{\pi_2, \dots, \pi_m}=[k]$ is non-empty, then $k \geq n-m+1$.
\end{lemma}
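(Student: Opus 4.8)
The plan is to show that $n-m+1$ is itself always a feasible first preference whenever any feasible first preference exists. Since it has already been recorded (just before the lemma) that $A_{\pi_2,\dots,\pi_m}$ is either empty or an interval $[k]$, it suffices to prove that $(n-m+1,\pi_2,\dots,\pi_m)\in\PF(m,n)$; this forces $n-m+1\le k$, which is exactly the assertion.

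First I would argue this directly from the pigeonhole criterion (\ref{pigeon}). Fix any $j_0\in A_{\pi_2,\dots,\pi_m}$, so that $(j_0,\pi_2,\dots,\pi_m)$ satisfies, for every $i=n-m+1,\dots,n$,
\[
\mathbf 1[j_0\le i]+\#\{\ell\ge 2:\pi_\ell\le i\}\ \ge\ m-n+i .
\]
Now replace the first coordinate by $n-m+1$. For each $i$ in this range we have $i\ge n-m+1$, hence $\mathbf 1[n-m+1\le i]=1\ge \mathbf 1[j_0\le i]$, so the corresponding count for $(n-m+1,\pi_2,\dots,\pi_m)$ is no smaller and (\ref{pigeon}) continues to hold for every relevant $i$. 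Therefore $(n-m+1,\pi_2,\dots,\pi_m)\in\PF(m,n)$, i.e.\ $n-m+1\in A_{\pi_2,\dots,\pi_m}$, and $k\ge n-m+1$.

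A second, more dynamical route makes the bound transparent, and I would mention it as well. Using the $\Sym_m$ permutation symmetry we may let car $1$ park last. Non-emptiness of $A_{\pi_2,\dots,\pi_m}$ guarantees that cars $2,\dots,m$ all park, occupying $m-1$ spots and leaving a set $E$ of exactly $n-m+1$ empty spots; car $1$ with preference $j$ then succeeds precisely when there is an empty spot at or beyond $j$, that is, when $j\le\max E$. Thus $A_{\pi_2,\dots,\pi_m}=[\max E]$ and $k=\max E$, and since $E$ consists of $n-m+1$ distinct integers inside $[n]$ its maximum is at least $n-m+1$. The only point needing care in either approach is the reduction itself—recognizing that feasibility of the first preference is controlled solely by $\mathbf 1[\pi_1\le i]$ (equivalently, by the largest empty spot)—but once the interval structure of $A_{\pi_2,\dots,\pi_m}$ noted before the lemma is in hand there is no genuine obstacle, and the argument is short.
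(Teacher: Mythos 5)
Your first argument is correct and is essentially the paper's own proof: the paper likewise works from (\ref{pigeon}), observing that because $k+1\le n-m+1$, changing the first coordinate from $k$ to $k+1$ leaves the counts $\#\{\ell:\pi_\ell\le i\}$ unchanged for every relevant $i\ge n-m+1$; you replace $j_0$ by $n-m+1$ outright rather than incrementing and contradicting maximality, but the mechanism --- the indicator $\mathbf 1[\pi_1\le i]$ is already maximal once $\pi_1\le n-m+1$ --- is the same. Your second, dynamical route is genuinely different: it invokes the $\Sym_m$ invariance to park car $1$ last, identifies $k=\max E$ where $E$ is the set of spots left empty by cars $2,\dots,m$, and gets the bound simply from $|E|=n-m+1$. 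That version buys a sharper structural fact (the largest feasible first preference \emph{is} the largest empty spot, which also makes the interval structure of $A_{\pi_2,\dots,\pi_m}$ transparent), at the cost of relying on permutation invariance and the sequential parking process rather than the static criterion; either argument suffices for the lemma.
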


\begin{proof}
We only need to show that if $\pi=(k, \pi_2, \dots, \pi_m)$ is a parking function with $k<n-m+1$, then $\pi'=(k+1, \pi_2, \dots, \pi_m)$ is also a parking function. This follows from (\ref{pigeon}), as $\pi$ and $\pi'$ only differ in the first coordinate and $k+1 \leq n-m+1$, and so
\begin{equation}
\#\{k: \pi_k\leq i\}=\#\{k: \pi'_k\leq i\}, \hspace{.2cm} \forall i=n-m+1, \dots, n.
\end{equation}
\end{proof}

\begin{definition}\label{shuffle}
Let $k \geq n-m+1$. Say that $\pi_2, \dots, \pi_m$ is a \textit{parking function shuffle} of the generalized parking function $\alpha \in \PF(m-n+k-1, k-1)$ and the classical parking function $\beta \in \PF(n-k, n-k)$ if $\pi_2, \dots, \pi_m$ is any permutation of the union of the two words $\alpha$ and $\beta+(k, \dots, k)$. We will denote this by $(\pi_2, \dots, \pi_m) \in \SH(m-n+k-1, k-1, n-k)$.
\end{definition}

\begin{example}
Take $m=8$, $n=10$, and $k=6$. Take $\alpha=(2, 1, 2) \in \PF(3, 5)$ and $\beta=(1, 2, 4, 3) \in \PF(4, 4)$. Then $(2, \underline{7},2, \underline{9}, \underline{10},1, \underline{8}) \in \SH(3, 5, 4)$ is a shuffle of the two words $(2, 1, 2)$ and $(7, 8, 10, 9)$.
\end{example}

\begin{theorem}\label{main1}
Let $k \geq n-m+1$. Then $A_{\pi_2, \dots, \pi_m}=[k]$ if and only if $(\pi_2, \dots, \pi_m) \in \SH(m-n+k-1, k-1, n-k)$.
\end{theorem}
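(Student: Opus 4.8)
The plan is to prove both directions by carefully tracking the parking process when the first car prefers spot $k+1$ (the ``failure'' boundary), and then decomposing the remaining cars according to whether they park to the left or to the right of spot $k$. The key structural fact is that, given the last $m-1$ preferences, the condition $A_{\pi_2,\dots,\pi_m}=[k]$ says precisely that $(k,\pi_2,\dots,\pi_m)$ is a parking function but $(k+1,\pi_2,\dots,\pi_m)$ is not. By Lemma~\ref{nonempty} we may assume $k\geq n-m+1$ throughout, so this boundary value $k$ is well-defined and meaningful. I would phrase the whole argument in terms of the criterion (\ref{pigeon}): $\pi$ is a parking function if and only if $\#\{j:\pi_j\le i\}\ge m-n+i$ for all $n-m+1\le i\le n$.

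First I would establish the backward direction ($\SH\Rightarrow A=[k]$). Suppose $(\pi_2,\dots,\pi_m)\in\SH(m-n+k-1,k-1,n-k)$, so these $m-1$ entries are a shuffle of $\alpha\in\PF(m-n+k-1,k-1)$ (all entries $\le k-1$) and $\beta+(k,\dots,k)$ where $\beta\in\PF(n-k,n-k)$ (all entries in $\{k+1,\dots,n\}$). I would first check that $(k,\pi_2,\dots,\pi_m)$ is a parking function: the first car together with the $\alpha$-cars fill exactly spots $1,\dots,k$ since $\alpha$ is a parking function on $k-1$ spots and adding one more car preferring spot $k$ keeps everything $\le k$ and fills spot $k$; meanwhile the $\beta$-cars, having preferences in $\{k+1,\dots,n\}$ and forming a classical parking function on the block $[k+1,n]$, fill exactly spots $k+1,\dots,n$ without interacting with the left block (no car overflows past $n$, and no car from the left reaches $k+1$). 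Then I would check that $(k+1,\pi_2,\dots,\pi_m)$ fails: here the first car joins the right block preferring spot $k+1$, so the right block now has $n-k+1$ cars competing for the $n-k$ spots $\{k+1,\dots,n\}$, forcing an overflow, while spot $k$ is left empty (nobody from $\alpha$ reaches it), violating (\ref{pigeon}) at $i=k$ or by the pigeonhole count on the right block. This shows $A_{\pi_2,\dots,\pi_m}=[k]$.

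For the forward direction ($A=[k]\Rightarrow\SH$), assume $A_{\pi_2,\dots,\pi_m}=[k]$, i.e. $(k,\pi_2,\dots,\pi_m)$ parks but $(k+1,\dots)$ does not. The crucial observation is that since $(k,\pi_2,\dots,\pi_m)$ is a parking function and the first car can take spot $k$, in the outcome of $(k,\pi_2,\dots,\pi_m)$ the spots $1,\dots,k$ are filled by the first car plus exactly $k-1$ of the remaining cars, and spots $k+1,\dots,n$ are filled by the other $n-k$ cars. I would argue that spot $k$ being the maximal feasible value forces the block $[k+1,n]$ to already be saturated by those $n-k$ cars that end up there, with no spillover across the boundary at $k$; equivalently, each of these $n-k$ cars has preference $\ge k+1$ (otherwise one of them would have parked in $[1,k]$, contradicting that the first car could only reach spot $k$ and that adding preference $k+1$ fails). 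This lets me partition $\{\pi_2,\dots,\pi_m\}$ into the $m-n+k-1$ cars with preference $\le k-1$ (which, after verifying the pigeonhole inequalities restricted to $[1,k-1]$, form an $\alpha\in\PF(m-n+k-1,k-1)$) and the $n-k$ cars with preference $\ge k+1$ (which, after subtracting $k$, form a $\beta\in\PF(n-k,n-k)$). That these restricted sequences satisfy the respective parking criteria is exactly the content of the independence-across-segments phenomenon already noted in Section~\ref{pf}.

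The main obstacle I anticipate is the forward direction's claim that no car from the ``left'' group ever overflows into spot $k$ or beyond and, dually, that the right block is exactly saturated --- in other words, that the boundary at $k$ is a genuine non-interacting cut. The subtlety is that the parking process is dynamic and order-dependent, so I must argue that the maximality of $k$ (feasibility of $k$ but infeasibility of $k+1$) forces this clean separation regardless of the order in which cars arrive. I would handle this by appealing to the order-invariance of parking functions under $\Sym_m$ (stated in the Introduction) to reduce to a convenient ordering, and by translating the feasibility/infeasibility of $k$ versus $k+1$ directly into the pigeonhole inequalities (\ref{pigeon}) at the single threshold $i=k$: feasibility of $k$ gives $\#\{j\ge 2:\pi_j\le k\}\ge m-n+k-1$ while infeasibility of $k+1$ gives the reverse inequality $\#\{j\ge 2:\pi_j\le k\}\le m-n+k-1$, pinning down the count exactly and yielding the desired clean partition into $\alpha$ and $\beta$.
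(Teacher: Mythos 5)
Your overall strategy is the same as the paper's: translate $A_{\pi_2, \dots, \pi_m}=[k]$ into ``$(k,\pi_2,\dots,\pi_m)$ is a parking function but $(k+1,\pi_2,\dots,\pi_m)$ is not,'' work with the pigeonhole criterion (\ref{pigeon}), and split the remaining cars into a left block $\alpha$ and a right block $\beta$. Your backward direction and your count-pinning at the threshold $i=k$ are sound (feasibility of $k$ gives $\#\{j\ge 2:\pi_j\le k\}\ge m-n+k-1$, and since the counts for $\pi$ and $\pi'$ agree at every threshold $i\neq k$, the failure of $\pi'$ must occur at $i=k$, giving the reverse inequality). However, your ``crucial observation'' in the forward direction is arithmetically false when $m<n$: you claim spots $1,\dots,k$ are filled by car $1$ plus exactly $k-1$ of the remaining cars and spots $k+1,\dots,n$ by the other $n-k$ cars, which accounts for $1+(k-1)+(n-k)=n$ cars, whereas there are only $m\le n$ cars. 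The correct statement is that the $m-n+k$ cars with preference $\le k$ all park \emph{within} $[1,k]$ (filling only $m-n+k$ of those spots), while the $n-k$ cars with preference $\ge k+1$ fill $[k+1,n]$ exactly; the same slip appears in your backward direction (``fill exactly spots $1,\dots,k$'').

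The more substantive gap is that the count-pinning does not by itself ``yield the desired clean partition into $\alpha$ and $\beta$'': it shows that exactly $m-n+k-1$ of $\pi_2,\dots,\pi_m$ are $\le k$, but your partition requires that all of these are $\le k-1$, i.e.\ that no subsequent car prefers spot $k$ itself. This is precisely the claim the paper's proof opens with, established there by an exchange argument: a second car preferring $k$ would necessarily park in $[k+1,\dots,n]$ under $\pi$, and could then change places with car $1$ under $\pi'$, making $\pi'$ a parking function --- contradicting infeasibility of $k+1$. You can instead close the gap inside your own counting framework: by the pinned count, exactly $n-k$ of the remaining cars have preference $\ge k+1$, and they alone must occupy all $n-k$ spots of $[k+1,n]$; a second car preferring $k$ would find spot $k$ taken by car $1$ (who parks first) and be forced into this already-saturated block, so some car would fail to park, contradicting that $\pi$ is a parking function. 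Either route works, but the step must actually be carried out; as written, the partition into preferences $\le k-1$ and $\ge k+1$ is asserted rather than proved, and your parenthetical justification (``otherwise one of them would have parked in $[1,k]$'') is circular, since a car that ends up in $[k+1,n]$ by definition did not park in $[1,k]$.
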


\begin{proof}
``$\Longrightarrow$'' $A_{\pi_2, \dots, \pi_m}=[k]$ is equivalent to saying that $\pi=(k, \pi_2, \dots, \pi_m)$ is a parking function but $\pi'=(k+1, \pi_2, \dots, \pi_m)$ is not. We claim that there can not be any subsequent car with preference $k$ if $\pi_1=k$ is allowed but $\pi'_1=k+1$ is not allowed. Such a car would necessarily park in spots $k+1, \dots, n$ for $\pi$, and consequently it could change places with car $1$ in $\pi'$. So the remaining cars have preference $>k$ or $<k$. Furthermore, the remaining cars with preference $>k$ are exactly $n-k$ in number, and thus those with preference $<k$ are $m-n+k-1$ in number. Let $\alpha$ be the subsequence of $(\pi_2, \dots, \pi_m)$ with value $<k$ and $\beta'$ be the subsequence of $(\pi_2, \dots, \pi_m)$ with value $>k$. Construct $\beta=\beta'-(k, \dots, k)$. It is clear from the above reasoning that $\beta \in \PF(n-k, n-k)$, and also $\alpha \in \PF(m-n+k-1, k-1)$. By Definition \ref{shuffle}, $(\pi_2, \dots, \pi_m) \in \SH(m-n+k-1, k-1, n-k)$.

``$\Longleftarrow$'' We first show that $\pi=(k, \pi_2, \dots, \pi_m)$ is a parking function. From Definition \ref{shuffle}, $\pi$ can be decomposed into three parts: a length $m-n+k-1$ subsequence $\alpha$ with entries $\leq k-1$, one entry $k$, and a length $n-k$ subsequence $\beta'$ with entries $>k$. Moreover, $\alpha$ and $\beta=\beta'-(k, \dots, k)$ are both parking functions. We verify (\ref{pigeon}) case by case. For $n-m+1\leq i<k$,
\begin{equation}
\#\{l: \pi_l \leq i\}=\#\{l: \alpha_l \leq i\}\geq m-n+i,
\end{equation}
using that $\alpha \in \PF(m-n+k-1, k-1)$. This also implies that
\begin{equation}
\#\{l: \pi_l \leq k\}\geq m-n+k.
\end{equation}
Lastly, for $i>k$,
\begin{equation}
\#\{l: \pi_l \leq i\}=\#\{l: \pi_l \leq k\}+\#\{l: 0<\beta_l \leq i-k\}\geq m-n+i,
\end{equation}
using the above and also that $\beta \in \PF(n-k, n-k)$.

Next we show that $\pi'=(k+1, \pi_2, \dots, \pi_m)$ is not a parking function. But this is immediate since the only entries of $\pi'$ that are bounded above by $k$ are those from $\alpha$,
\begin{equation}
\#\{l: \pi'_l \leq k\}=m-n+k-1<m-n+k,
\end{equation}
a contradiction. Combining, we have $A_{\pi_2, \dots, \pi_m}=[k]$.
\end{proof}

\begin{corollary}
The number of parking functions $|\PF(m, n)|$ satisfies a recursive relation:
\begin{equation}
|\PF(m, n)|=\sum_{k=n-m+1}^n k \binom{m-1}{n-k}|\PF(m-n+k-1, k-1)||\PF(n-k, n-k)|.
\end{equation}
\end{corollary}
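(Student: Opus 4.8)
The plan is to derive this recurrence as a direct counting consequence of Theorem \ref{main1}, summing over the largest feasible first preference. By permutation symmetry it suffices to condition on the first coordinate, so I would partition the set $\PF(m,n)$ according to the value $k=\max A_{\pi_2,\dots,\pi_m}$, the largest admissible first preference given the remaining $m-1$ entries. By Lemma \ref{nonempty} this value satisfies $k\geq n-m+1$, and it clearly satisfies $k\leq n$, so these are exactly the summation limits appearing in the statement.

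The heart of the argument is to count, for each fixed $k$ in this range, the number of ways to complete $(\pi_2,\dots,\pi_m)$. First I would observe that every parking function in $\PF(m,n)$ arises by choosing a value $\pi_1=j$ with $1\leq j\leq k$ and a tail $(\pi_2,\dots,\pi_m)$ whose associated maximal admissible first preference is exactly $k$; this accounts for the leading factor of $k$ (there are $k$ valid choices for $\pi_1$ once the tail is fixed, since $A_{\pi_2,\dots,\pi_m}=[k]$). Next, Theorem \ref{main1} identifies the tails with maximal admissible preference exactly $k$ as precisely the shuffles in $\SH(m-n+k-1,k-1,n-k)$. By Definition \ref{shuffle}, such a shuffle is an interleaving of a word $\alpha\in\PF(m-n+k-1,k-1)$ with a shifted word $\beta+(k,\dots,k)$ where $\beta\in\PF(n-k,n-k)$. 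The number of ways to build such a shuffle is $\binom{m-1}{n-k}$, the number of ways to choose which of the $m-1$ tail positions receive the $n-k$ entries coming from $\beta$ (the remaining $m-n+k-1$ positions receiving the entries of $\alpha$), times the number of choices $|\PF(m-n+k-1,k-1)|$ for $\alpha$ and $|\PF(n-k,n-k)|$ for $\beta$. Summing the product $k\binom{m-1}{n-k}|\PF(m-n+k-1,k-1)||\PF(n-k,n-k)|$ over $k$ from $n-m+1$ to $n$ yields $|\PF(m,n)|$.

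The one point requiring care is the no-overcounting claim: I must check that the map sending a parking function $\pi$ to the triple $(k,\alpha,\beta)$ together with the placement data (the choice of $\pi_1\in[k]$ and the interleaving pattern) is a genuine bijection onto the indexing set of the sum. One direction is clear because $k$ is uniquely determined as $\max A_{\pi_2,\dots,\pi_m}$, and then $\alpha,\beta$ and the interleaving are recovered canonically from the tail as in the proof of Theorem \ref{main1} (the subsequence of entries $<k$ is $\alpha$, and the subsequence of entries $>k$ shifted down by $k$ is $\beta$). Conversely, the forward direction of Theorem \ref{main1} guarantees that every such triple and placement produces a tail with $A_{\pi_2,\dots,\pi_m}=[k]$, so every admissible choice of $\pi_1\in[k]$ yields a valid parking function. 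I would expect this bijectivity bookkeeping to be the main (though still routine) obstacle, and a careful write-up would note that $k=n$ forces $\beta\in\PF(0,0)$, the empty parking function, which is consistent since then there are no entries exceeding $k$.
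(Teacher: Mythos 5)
Your proposal is correct and follows essentially the same route as the paper's own (much terser) proof: partition by the largest feasible first preference $k$ via Lemma \ref{nonempty} and Theorem \ref{main1}, with the factor $k$ counting choices of $\pi_1$ and the binomial coefficient counting the interleaving of $\alpha$ and the shifted $\beta$. Your added bijectivity bookkeeping (entries $<k$ versus $>k$ make the decomposition canonical) is a correct elaboration of what the paper leaves implicit.
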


\begin{proof}
This follows from Theorem \ref{main1}. Let $\pi$ be a parking function. The index $k$ denotes the largest possible $\pi_1$ consistent with $\pi_2, \dots, \pi_m$. By Lemma \ref{nonempty}, $k \geq n-m+1$. The binomial coefficient accounts for the shuffling of the length $m-n+k-1$ subsequence and the length $n-k$ subsequence.
\end{proof}

\begin{corollary}\label{component}
The number of parking functions $\pi \in \PF(m, n)$ with $\pi_1=j$ is
\begin{equation}
(n-m+1) \sum_{s=0}^{\min(n-j, m-1)} \binom{m-1}{s} (n-s)^{m-s-2} (s+1)^{s-1}.
\end{equation}
Note that this quantity stays constant for $j\leq n-m+1$ and decreases as $j$ increases past $n-m+1$ as there are fewer resulting summands.
\end{corollary}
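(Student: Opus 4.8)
The plan is to count the desired parking functions by conditioning on the last $m-1$ coordinates and invoking the shuffle characterization of Theorem \ref{main1}. Fix $j\in[n]$. A parking function $\pi\in\PF(m,n)$ with $\pi_1=j$ is the same data as a tail $(\pi_2,\dots,\pi_m)$ for which $j$ is an admissible first preference. By the discussion preceding Lemma \ref{nonempty}, the set of admissible first preferences is $A_{\pi_2,\dots,\pi_m}=[k]$ for some value $k$ (or is empty), so $\pi_1=j$ is admissible precisely when $k\ge j$, and in that case the tail together with $\pi_1=j$ determines a unique parking function. Hence
\[
\#\{\pi\in\PF(m,n):\pi_1=j\}=\sum_{k\ge j}\#\{(\pi_2,\dots,\pi_m):A_{\pi_2,\dots,\pi_m}=[k]\},
\]
and by Lemma \ref{nonempty} only $k\ge n-m+1$ contribute, so the sum effectively runs over $k\ge\max(j,\,n-m+1)$.

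Next I would evaluate the inner cardinality for a fixed $k$ using Theorem \ref{main1}, which identifies these tails with the shuffle class $\SH(m-n+k-1,k-1,n-k)$. A word in this class is an interleaving of a parking function $\alpha\in\PF(m-n+k-1,k-1)$ (its entries being $<k$) and a translate $\beta+(k,\dots,k)$ of a classical parking function $\beta\in\PF(n-k,n-k)$ (its entries being $>k$); since no entry equals $k$, comparing each entry to $k$ recovers $\alpha$, $\beta$, and the interleaving pattern uniquely, so the decomposition is a genuine bijection. Consequently the number of such tails is
\[
\binom{m-1}{n-k}\,|\PF(m-n+k-1,k-1)|\,|\PF(n-k,n-k)|,
\]
the binomial coefficient counting the positions of the $n-k$ large entries among the $m-1$ slots.

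I would then substitute the closed forms of Theorem \ref{number}. For the first factor one checks $n'-m'+1=(k-1)-(m-n+k-1)+1=n-m+1$, so $|\PF(m-n+k-1,k-1)|=(n-m+1)\,k^{\,m-n+k-2}$, while $|\PF(n-k,n-k)|=(n-k+1)^{\,n-k-1}$. Summing the displayed product over $k$ from $\max(j,\,n-m+1)$ to $n$ and re-indexing with $s=n-k$ converts $\binom{m-1}{n-k}$ into $\binom{m-1}{s}$, the factor $k^{\,m-n+k-2}$ into $(n-s)^{\,m-s-2}$, and $(n-k+1)^{\,n-k-1}$ into $(s+1)^{\,s-1}$; the range $k\in[\max(j,n-m+1),\,n]$ becomes $s\in[0,\,\min(n-j,\,m-1)]$, using $n-\max(j,n-m+1)=\min(n-j,\,m-1)$. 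Pulling out the common factor $(n-m+1)$ produces exactly the stated expression. The concluding remark is then immediate: when $j\le n-m+1$ the upper limit $\min(n-j,\,m-1)$ equals $m-1$ regardless of $j$, so the count is constant in that range, while each unit increase of $j$ past $n-m+1$ deletes one nonnegative summand, giving the asserted decrease.

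The only delicate point is the bookkeeping of the two summation limits and the change of variable $s=n-k$; there is no substantive obstacle, since all the structural content has already been supplied by the shuffle bijection of Theorem \ref{main1} together with the enumeration of Theorem \ref{number}.
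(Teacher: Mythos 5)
Your proposal is correct and follows essentially the same route as the paper: condition on the largest admissible first preference $k=\max A_{\pi_2,\dots,\pi_m}\ge\max(j,n-m+1)$, count the tails with $A_{\pi_2,\dots,\pi_m}=[k]$ via the shuffle bijection of Theorem \ref{main1} as $\binom{m-1}{n-k}|\PF(m-n+k-1,k-1)||\PF(n-k,n-k)|$, substitute the closed forms from Theorem \ref{number}, and change variables $s=n-k$. The only difference is that you spell out details the paper leaves implicit (the bijectivity of the shuffle decomposition and the limit bookkeeping), which is harmless.
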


\begin{proof}
If $\pi_1=j$, then $A_{\pi_2, \dots, \pi_m}=[k]$ for some $k\geq \max(j, n-m+1)$. Thus the number of parking functions with $\pi_1=j$ is
\begin{align}\notag
&\sum_{k=\max(j, n-m+1)}^n \binom{m-1}{n-k} |\PF(m-n+k-1, k-1)||\PF(n-k, n-k)|\\
&=\sum_{k=\max(j, n-m+1)}^n \binom{m-1}{n-k} (n-m+1) k^{m-n+k-2} (n-k+1)^{n-k-1} \label{withk} \\
&=(n-m+1) \sum_{s=0}^{\min(n-j, m-1)} \binom{m-1}{s} (n-s)^{m-s-2} (s+1)^{s-1}. \notag
\end{align}
Here the first equality is due to Proposition \ref{number}, and the second equality is a simple change of variables $s=n-k$.
\end{proof}

As a side result of the parking function shuffle construction, we obtain a recurrence relation for the displacement enumerator of generalized parking functions that was introduced in Theorem \ref{inv-disp}. This extends the corresponding result of Kreweras \cite{Kr} for classical parking functions. See Yan \cite{Yan1} for related formulations for the inversion enumerator of rooted forests and complement enumerator of $\vec{x}$-parking functions.

\begin{proposition}\label{side}
The displacement enumerator $D_{m, s}(y)$ of parking functions with $m$ cars and $n$ spots (\ref{displacement}), where $s=n-m+1$, satisfies the recurrence relation
\begin{equation}\label{recurrence}
D_{m, s}(y)=\sum_{i=0}^{m-1} \binom{m-1}{i} (s+y+y^2+\cdots+y^i) D_{i, s}(y)D_{m-1-i, 1}(y),
\end{equation}
with initial condition
\begin{equation}
D_{0, s}(y)=1, \hspace{.2cm} \forall s\geq 1.
\end{equation}
\end{proposition}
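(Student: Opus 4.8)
The plan is to condition on the largest feasible value of the first coordinate and invoke the shuffle decomposition of Theorem \ref{main1}. Fix $\pi \in \PF(m, m+s-1)$ and let $k$ be the integer with $A_{\pi_2, \dots, \pi_m} = [k]$; by Lemma \ref{nonempty} we have $k \ge s$, so we may write $k = i + s$ with $0 \le i \le m-1$. By Theorem \ref{main1}, $(\pi_2, \dots, \pi_m) \in \SH(i, i+s-1, m-1-i)$: the remaining $m-1$ cars split into a subword $\alpha \in \PF(i, i+s-1)$ of preferences $< k$ and a subword $\beta + (k, \dots, k)$ with $\beta \in \PF(m-1-i, m-1-i)$ of preferences $> k$, interleaved in one of $\binom{m-1}{n-k} = \binom{m-1}{i}$ ways. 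I would organize the sum of $y^{\disp(\pi)}$ over all $\pi$ according to $i$, the interleaving, the choices of $\alpha$ and $\beta$, and the value $j = \pi_1 \in [1,k]$; the initial condition is immediate since $\PF(0, s-1)$ is the single empty parking function with zero displacement, so $D_{0,s}(y)=1$.

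The first substantive step is to show that displacement is additive across this decomposition, $\disp(\pi) = \disp(\alpha) + \disp(\beta) + d(j)$, where $d(j)$ is the displacement incurred by car $1$ alone. I would prove this using the order-independence of total displacement: $\disp(\pi)$ equals the sum of the occupied spots minus the sum of the preferences, and so is insensitive to arrival order. Since $\beta$ is a classical parking function, its translate fills exactly the block $[k+1, m+s-1]$ and contributes $\disp(\beta)$; the $\alpha$-cars park within $[1,k-1]$ exactly as they would on their own, contributing $\disp(\alpha)$; and car $1$, processed last, lands at the first empty spot $\ge j$ in the configuration left by $\alpha$ together with the still-free spot $k$. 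Granting this, the generating function factors as
\[
D_{m,s}(y) = \sum_{i=0}^{m-1}\binom{m-1}{i}\left(\sum_{\alpha}y^{\disp(\alpha)}\sum_{j=1}^{k}y^{d(j)}\right)D_{m-1-i,1}(y),
\]
so everything reduces to evaluating the inner double sum.

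The crux, and what I expect to be the main obstacle, is the asserted identity
\[
\sum_{j=1}^{k} y^{d(j)} = s + y + y^2 + \cdots + y^i,
\]
which would give $\sum_{\alpha} y^{\disp(\alpha)}\sum_j y^{d(j)} = (s + y + \cdots + y^i)\,D_{i,s}(y)$ and hence the stated recurrence. Grouping $j$ by the empty spot at which car $1$ lands, this inner sum equals $\sum_{t=1}^{s} (1 + y + \cdots + y^{g_t - 1})$, where $g_1, \dots, g_s$ are the gaps between consecutive empty spots $0 = e_0 < e_1 < \cdots < e_s = k$, namely the $s-1$ interior holes of $\alpha$ in $[1,k-1]$ together with the boundary hole at $k$. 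The delicate point is that this gap profile, and hence the polynomial, \emph{a priori} depends on $\alpha$: a single large gap of size $i+1$ with the other $s-1$ gaps of size $1$ reproduces $s + y + \cdots + y^i$, but a configuration whose holes are spread out (for instance occupied spots $\{1,3\}$ with $s=2,i=2$) yields a different polynomial agreeing only at $y=1$. I would therefore focus verification precisely here, determining whether the intended construction forces car $1$ to traverse a single maximal gap, or whether the claimed factor is meant to emerge only after summing against $y^{\disp(\alpha)}$ over all $\alpha \in \PF(i, i+s-1)$. The classical case $s=1$ of Kreweras \cite{Kr}, where $\alpha$ fills $[1,i]$ completely and the unique gap has size $i+1$, is the clean prototype that this step must correctly generalize.
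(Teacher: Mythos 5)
Your decomposition is exactly the paper's (shuffle construction, condition on $k=i+s$, additivity of displacement with car $1$ processed last), and your additivity step is sound. But the obstacle you flagged at the end is not a detail to be checked and dispatched --- it is fatal, both to the paper's own proof and to the statement itself. The paper's proof rests on the claim $\disp(\pi)=\disp(\alpha)+\disp(\beta)+\max(i+1-j,0)$ with $j=\pi_1$, which is precisely the single-large-gap assumption you questioned: it amounts to saying car $1$ always lands at spot $\max(j,i+1)$, i.e.\ that $\alpha$ occupies exactly the initial segment $[1,i]$. That is automatic for $s=1$ (the Kreweras prototype you cite) but false for $s\geq 2$. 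Completing your own counterexample: with $s=2$, $i=2$, $\alpha=(1,3)\in\PF(2,3)$, $\beta$ empty, $j=1$, the parking function $\pi=(1,1,3)\in\PF(3,4)$ has $\disp(\pi)=1$, while the paper's formula gives $0+0+\max(3-1,0)=2$. Moreover, as $j$ runs over $[1,4]$ the true values of $\disp(\pi)-\disp(\alpha)$ form the multiset $\{0,0,1,1\}$ (your $2+2y$), not $\{0,0,1,2\}$ (the claimed $2+y+y^2$), so no re-indexing of $j$ can repair the term-by-term identity either.

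The question you left open --- whether the factor $(s+y+\cdots+y^i)$ emerges after summing against $y^{\disp(\alpha)}$ --- has a negative answer, so Proposition \ref{side} is false as stated for $s\geq 2$. For $(m,s)=(3,2)$, direct enumeration, or Theorem \ref{main} with $T_{K_3}(1,y)=2+y$ and $T_{K_4}(1,y)=6+6y+3y^2+y^3$, gives
\begin{equation*}
D_{3,2}(y)=24+18y+6y^2+2y^3,
\end{equation*}
whereas the right-hand side of (\ref{recurrence}) equals
\begin{equation*}
2(2+y)+4(2+y)+(2+y+y^2)(6+2y)=24+16y+8y^2+2y^3.
\end{equation*}
The deficit $2y-2y^2$ is contributed exactly by $\alpha\in\{(1,3),(3,1)\}$, the two parking functions whose holes are spread out. (Both sides do agree at $y=1$, both equal to $|\PF(3,4)|=50$: the recurrence is correct as a counting identity, which is presumably why the error escaped notice.) A correct recurrence in the same spirit is obtained by conditioning instead on the first unattempted spot $k_1$: the $j=k_1-1$ cars with preferences below $k_1$ form a classical parking function filling $[1,j]$, and the remaining $m-j$ cars, shifted down by $k_1$, form an element of $\PF(m-j,\,(m-j)+(s-1)-1)$, giving $D_{m,s}(y)=\sum_{j=0}^{m}\binom{m}{j}D_{j,1}(y)\,D_{m-j,s-1}(y)$ for $s\geq 2$, which does reproduce $D_{3,2}(y)$ above. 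The stated recurrence (\ref{recurrence}) itself holds only in the classical case $s=1$.
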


\begin{proof}
Consider an arbitrary parking function $\pi \in \PF(m, m+s-1)$. Through the parking function shuffle construction explained in Definition \ref{shuffle} and Theorem \ref{main1}, we demonstrated a bijection from $\pi$ to $i, S, \alpha, \beta, j$, where $0\leq i\leq m-1$, $S$ is an $i$-element subset of $[m-1]$, $\alpha \in \PF(i, i+s-1)$, $\beta \in \PF(m-1-i, m-1-i)$, and $1\leq j\leq i+s$. This implies that
\begin{equation}
\disp(\pi)=\disp(\alpha)+\disp(\beta)+\max(i+1-j, 0),
\end{equation}
which further implies the recurrence relation (\ref{recurrence}).
\end{proof}

\subsection{Single coordinates}
Our asymptotic investigations in this section and the next rely on Abel's extension of the binomial theorem:

\begin{theorem}[Abel's extension of the binomial theorem, derived from Pitman \cite{Pitman} and Riordan \cite{Riordan}]\label{Abel}
Let
\begin{equation}\label{b}
A_n(x, y; p, q)=\sum_{s=0}^n \binom{n}{s} (x+s)^{s+p} (y+n-s)^{n-s+q}.
\end{equation}
Then
\begin{equation}\label{b1}
A_n(x, y; p, q)=A_n(y, x; q, p).
\end{equation}

\begin{equation}\label{b2}
A_n(x, y; p, q)=A_{n-1}(x, y+1; p, q+1)+A_{n-1}(x+1, y; p+1, q).
\end{equation}

\begin{equation}\label{b3}
A_n(x, y; p, q)=\sum_{s=0}^{n} \binom{n}{s}s!(x+s)A_{n-s}(x+s, y; p-1, q).
\end{equation}
Moreover, the following special instances hold via the basic recurrences listed above:
\begin{equation}\label{1}
A_n(x, y; -1, -1)=(x^{-1}+y^{-1})(x+y+n)^{n-1}.
\end{equation}

\begin{equation}\label{2}
A_n(x, y; -1, 0)=x^{-1}(x+y+n)^n.
\end{equation}

\begin{equation}\label{3}
A_n(x, y; -1, 1)=x^{-1} \sum_{s=0}^n \binom{n}{s} (x+y+n)^s (y+n-s) (n-s)!.
\end{equation}
\end{theorem}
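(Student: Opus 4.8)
The plan is to establish the three ``basic recurrences'' (\ref{b1})--(\ref{b3}) by elementary manipulations of the defining sum (\ref{b}), and then to extract the closed forms (\ref{1})--(\ref{3}) from them together with a single induction on $n$. First I would prove the symmetry (\ref{b1}) by the substitution $s\mapsto n-s$ in (\ref{b}): this interchanges the factors $(x+s)^{s+p}$ and $(y+n-s)^{n-s+q}$ while leaving $\binom{n}{s}$ fixed, giving $A_n(x,y;p,q)=A_n(y,x;q,p)$ at once. For (\ref{b2}) I would apply Pascal's rule $\binom{n}{s}=\binom{n-1}{s-1}+\binom{n-1}{s}$ and split the sum; reindexing the first piece by $s\mapsto s+1$ turns $(x+s)^{s+p}$ into $((x+1)+s)^{s+(p+1)}$ and $(y+n-s)^{n-s+q}$ into $(y+(n-1)-s)^{(n-1)-s+q}$, so that piece is exactly $A_{n-1}(x+1,y;p+1,q)$, while the untouched piece is $A_{n-1}(x,y+1;p,q+1)$.

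For (\ref{b3}) I would expand the right-hand side, write $A_{n-s}(x+s,y;p-1,q)$ as its own sum over an index $t$, and collect terms according to $u=s+t$. Using $\binom{n}{s}\binom{n-s}{u-s}=\binom{n}{u}\binom{u}{s}$, the coefficient of $(y+n-u)^{n-u+q}$ becomes $\binom{n}{u}(x+u)^{p-1}\sum_{s=0}^{u}\frac{u!}{(u-s)!}(x+s)(x+u)^{u-s}$, so matching against (\ref{b}) reduces the whole identity to the single evaluation $\sum_{j=0}^{u}\frac{u!}{j!}(a-j)a^{j}=a^{u+1}$ (with $a=x+u$, $j=u-s$), which telescopes once the two sums are separated. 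This is the most calculation-heavy of the three recurrences but is routine.

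The heart of the matter is (\ref{2}), which I would prove by strong induction on $n$ simultaneously with the auxiliary closed form $A_n(x,y;0,0)=\sum_{j=0}^{n}\frac{n!}{j!}(x+y+n)^{j}$. Writing $P_n:=A_n(x,y;-1,0)$ and $w:=x+y+n$, differentiating (\ref{b}) in $y$ gives $\partial_y P_n=n\,P_{n-1}(x,y+1)$, so the inductive hypothesis yields $P_n=x^{-1}w^{n}+c_n(x)$ for some $c_n$ independent of $y$. To pin down $c_n$ I would feed this into (\ref{b3}) with $p=q=0$: its $s=0$ term is $xP_n$, and, applying the inductive hypothesis to the remaining terms, the rest collapses to the symmetric expression $\sum_{j=0}^{n-1}\frac{n!}{j!}w^{j}$, so that $A_n(x,y;0,0)=w^{n}+xc_n(x)+\sum_{j=0}^{n-1}\frac{n!}{j!}w^{j}$. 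Since $A_n(x,y;0,0)$ is symmetric in $x$ and $y$ by (\ref{b1}) while every term except $xc_n(x)$ depends on $w$ alone, $xc_n(x)$ must be invariant under $x\leftrightarrow y$; a quantity depending only on $x$ that is $x\leftrightarrow y$ symmetric is constant, so $xc_n(x)\equiv\kappa_n$. Finally, $c_n$ is a polynomial in $x$ (the only pole of $P_n$, coming from the $s=0$ term, cancels that of $x^{-1}w^{n}$), so $c_n(x)=\kappa_n/x$ forces $\kappa_n=0$ and hence $c_n\equiv0$. This establishes both (\ref{2}) and the $(0,0)$ formula. The symmetry-plus-pole argument fixing the integration constant $c_n(x)$ is the step I expect to be the main obstacle, since the naive $\partial_y$ induction alone loses this constant at every level.

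With (\ref{2}) in hand the remaining identities are short. For (\ref{1}) I would apply (\ref{b2}) with $p=q=-1$ to get $A_n(x,y;-1,-1)=A_{n-1}(x,y+1;-1,0)+A_{n-1}(x+1,y;0,-1)$; the first term equals $x^{-1}(x+y+n)^{n-1}$ by (\ref{2}), and the second equals $A_{n-1}(y,x+1;-1,0)=y^{-1}(x+y+n)^{n-1}$ after using (\ref{b1}) and then (\ref{2}), so their sum is $(x^{-1}+y^{-1})(x+y+n)^{n-1}$. For (\ref{3}) I would read (\ref{b2}) with $p=-1,q=0$ one level up, namely $A_n(x,y;-1,1)=A_{n+1}(x,y-1;-1,0)-A_n(x+1,y-1;0,0)$, and substitute the closed forms for the $(-1,0)$ and $(0,0)$ families to obtain $A_n(x,y;-1,1)=x^{-1}w^{n+1}-\sum_{j=0}^{n}\frac{n!}{j!}w^{j}$; a short rearrangement using $\binom{n}{s}(n-s)!=n!/s!$ and $y+n-s=w-x-s$ then recasts this as the stated sum $x^{-1}\sum_{s=0}^{n}\binom{n}{s}(x+y+n)^{s}(y+n-s)(n-s)!$.
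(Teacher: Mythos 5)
Your proposal is correct, and it is genuinely different from what the paper does, because the paper does not prove Theorem \ref{Abel} at all: the identities are imported from Riordan's \emph{Combinatorial Identities} and Pitman's forest-volume paper, with only the remark that the special instances follow ``via the basic recurrences.'' Your argument supplies a self-contained derivation, and every step checks out: the substitution $s\mapsto n-s$ gives \eqref{b1}; Pascal's rule plus the reindexing $s\mapsto s+1$ gives \eqref{b2}; the collection with $u=s+t$ and $\binom{n}{s}\binom{n-s}{u-s}=\binom{n}{u}\binom{u}{s}$ reduces \eqref{b3} to $\sum_{j=0}^{u}\frac{u!}{j!}(a-j)a^{j}=a^{u+1}$, which indeed telescopes. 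You correctly identify \eqref{2} as the heart of the matter --- it is essentially Abel's classical identity and does \emph{not} follow from the recurrences alone --- and your treatment of it is sound: $\partial_y A_n(x,y;-1,0)=nA_{n-1}(x,y+1;-1,0)$ is a valid polynomial identity, the \eqref{b3}-decomposition with inductive input gives $A_n(x,y;0,0)=\sum_{j=0}^{n}\frac{n!}{j!}w^{j}+xc_n(x)$, the \eqref{b1}-symmetry then forces $xc_n(x)=yc_n(y)\equiv\kappa_n$, and the pole-cancellation observation (the residue of the $s=0$ term of $A_n(x,y;-1,0)$ at $x=0$ equals that of $x^{-1}w^{n}$) makes $c_n$ a polynomial, killing $\kappa_n$. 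The downstream derivations of \eqref{1} and \eqref{3} from \eqref{2}, \eqref{b1}, \eqref{b2}, and the $(0,0)$ closed form are exactly right; I verified that $x^{-1}w^{n+1}-\sum_{j=0}^{n}\frac{n!}{j!}w^{j}$ rearranges to the stated form of \eqref{3} using $y+n-s=w-x-s$. What your route buys is a paper-independent proof confirming that the claimed logical structure (recurrences $\Rightarrow$ special instances) is honest, at the cost of one nontrivial auxiliary idea (the symmetry-plus-pole argument) that the cited references replace with their own machinery (Abel's original summation argument in Riordan, forest volume decompositions in Pitman). The only cosmetic omission is the $n=0$ base case of \eqref{1}, since the \eqref{b2} argument needs $n\ge 1$; it is an immediate direct check, as is the $n=0$ base of your induction for \eqref{2}.
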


\old{
\begin{lemma}\label{CLT}
Let $X_1, X_2, \dots$ be iid Poisson$(1)$ random variables. Then
\begin{equation}
\PR(X_1+\cdots+X_n \leq n)=\frac{1}{2}+\frac{2}{3}\frac{1}{\sqrt{2\pi n}}+o(\frac{1}{\sqrt{n}}).
\end{equation}
\end{lemma}

\begin{proof}
We apply the continuity-corrected Edgeworth expansion as in Esseen \cite{Esseen} and Kolassa and McCullagh \cite{KM}, so that
\begin{multline}
\PR(X_1+\cdots+X_n \leq n+x\sqrt{n})=\PR(\frac{X_1+\cdots+X_n-n}{\sqrt{n}}\leq x)\\=\Phi(0)+\frac{\exp(-x^2/2)}{\sqrt{2\pi n}}\left(\frac{\mu_3}{6\sigma^3}(1-x^2)+\frac{1}{\sigma}D(x\sigma\sqrt{n})\right)+o(\frac{1}{\sqrt{n}}),
\end{multline}
where $\sigma$ is the standard deviation and $\mu_3$ is the third central moment of $X_1$, $\Phi(x)$ is the distribution function of the standard normal, and $D(x)=\lfloor x \rfloor-x+1/2$
is a periodic function that addresses the discontinuity in the distribution function of the discrete random variable $(X_1+\cdots+X_n-n)/\sqrt{n}$.
\end{proof}

\begin{lemma}\label{LD}
Take $0<c<1$. Let $X_1, X_2, \dots$ be iid Poisson$(1)$ random variables. Then
\begin{equation}
\PR(X_1+\cdots+X_n \leq nc)=\frac{\exp(-n(c\log c-c+1))}{\sqrt{2\pi nc}}\left(1-\frac1{12nc}+O(n^{-2})\right).
\end{equation}
\end{lemma}

\begin{proof}
$X=X_1+\dots+X_n$ is a Poisson$(n)$ random variable. For $c<1$, $\PR(X\le nc)$ is well approximated by
$\PR(X= \lfloor nc\rfloor),$ that is,
$$\PR(X\le nc) = \frac{n^{nc}e^{-n}}{(nc)!}(1+O(n^{-2})).$$
Now apply Stirling's formula.
\end{proof}
}

Refocusing on the single coordinate $\pi_1$, we will start by computing its limiting distribution.
It is not surprising that the bulk of the values for $\PR(\pi_1=j)$ are close to uniform $1/n$
but we will also see some intriguing tendencies at the endpoints.
Let $X$ be a random variable satisfying the Borel distribution with parameter $\mu$ ($0\leq \mu\leq 1$), that is, with pdf given by, for $j=1, 2, \dots$,
\begin{equation}
\PR_\mu(X=j)=\frac{e^{-\mu j}(\mu j)^{j-1}}{j!}.
\end{equation}
Denote by $\QR_\mu(j)=\PR_\mu(X\geq j)$. We refer to Stanley \cite{Stanley} for some nice properties of this discrete distribution. The next corollaries show that on the right end, $\pi_1$ approximates a Borel distribution with parameter $m/n$.

\begin{corollary}\label{bo1}
Let $k \geq n-m+1$ and $n-k$ fixed and small relative to $m$ and $n$. For parking function $\pi$ chosen uniformly at random from $\PF(m, n)$, we have
\begin{equation}
\PR(A_{\pi_2, \dots, \pi_m}=[k]) \sim \frac{1}{n}\PR_{m/n}(X=n-k+1),
\end{equation}
where $X$ is Borel-$m/n$ distributed.
\end{corollary}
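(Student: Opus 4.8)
The plan is to turn the probability into an exact ratio of parking‑function counts via the shuffle decomposition, and then read off its leading order. By Theorem~\ref{main1}, the event $\{A_{\pi_2,\dots,\pi_m}=[k]\}$ says precisely that the remainder $(\pi_2,\dots,\pi_m)$ lies in $\SH(m-n+k-1,k-1,n-k)$; counting the interleavings together with the two factor parking functions, the number of such remainders is
\begin{equation*}
N_k=\binom{m-1}{n-k}\,|\PF(m-n+k-1,k-1)|\,|\PF(n-k,n-k)|.
\end{equation*}
Forming the probability as $N_k/|\PF(m,n)|$ (the role of the $k\sim n$ feasible values of $\pi_1$ attached to each remainder is addressed at the end) and substituting the closed forms of Theorem~\ref{number}, namely $|\PF(m,n)|=(n-m+1)(n+1)^{m-1}$, $|\PF(m-n+k-1,k-1)|=(n-m+1)k^{m-n+k-2}$, and $|\PF(n-k,n-k)|=(n-k+1)^{n-k-1}$, makes the factor $(n-m+1)$ cancel.

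Writing $j=n-k+1$ (the fixed small parameter) and $\mu=m/n$, the ratio collapses to
\begin{equation*}
\frac{\binom{m-1}{j-1}\,(n-j+1)^{m-j-1}\,j^{\,j-2}}{(n+1)^{m-1}} .
\end{equation*}
From here the argument is purely asymptotic with $j$ fixed and $m,n\to\infty$. I would use $\binom{m-1}{j-1}\sim m^{j-1}/(j-1)!$, split $(n-j+1)^{m-j-1}=(n+1)^{m-j-1}\bigl(1-\tfrac{j}{n+1}\bigr)^{m-j-1}$ and cancel against $(n+1)^{m-1}$ to leave the clean power $(n+1)^{-j}$, and then apply the exponential estimate $\bigl(1-\tfrac{j}{n+1}\bigr)^{m-j-1}\sim e^{-\mu j}$ together with $m^{j-1}/(n+1)^{j}\sim \tfrac1n\,\mu^{j-1}$. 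Collecting these gives leading term
\begin{equation*}
\frac1n\cdot\frac{e^{-\mu j}\,\mu^{j-1}\,j^{\,j-2}}{(j-1)!}.
\end{equation*}
Finally I would invoke the elementary identity $j^{\,j-2}/(j-1)!=j^{\,j-1}/j!$ to recognize the second factor as the Borel pmf $\PR_\mu(X=j)=e^{-\mu j}(\mu j)^{j-1}/j!$, which yields exactly $\tfrac1n\PR_{m/n}(X=n-k+1)$.

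The main obstacle is the exponential estimate, since $\mu=m/n$ is retained as a genuine finite‑$n$ quantity rather than replaced by a limiting constant: one must verify $(m-j-1)\log\bigl(1-\tfrac{j}{n+1}\bigr)=-\mu j+O(1/n)$ uniformly, using $j$ fixed and $\mu\le 1$ to dominate both the tail of the logarithmic series and the discrepancy between $m/(n+1)$ and $m/n$. The only other delicate point is the bookkeeping of the powers of $n+1$ versus $n$ and of the count $k=n-j+1\sim n$ of feasible first coordinates; this is precisely what fixes the normalizing factor $\tfrac1n$, so it must be tracked exactly rather than absorbed into the $\sim$. Once these are in hand, the appearance of the Borel distribution is immediate from the binomial identity above.
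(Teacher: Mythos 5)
Your proof is correct and takes essentially the same route as the paper's: both compute the exact ratio $\binom{m-1}{n-k}|\PF(m-n+k-1,k-1)||\PF(n-k,n-k)|/|\PF(m,n)|$ coming from the shuffle decomposition (the paper cites this as (\ref{withk}), with the same convention of normalizing by $|\PF(m,n)|$ without an extra factor of $k$), and then carry out the same fixed-$j=n-k+1$ asymptotics, using $\binom{m-1}{j-1}\sim m^{j-1}/(j-1)!$, the exponential approximation of the large power, and the identity $j^{j-2}/(j-1)!=j^{j-1}/j!$ to recognize the Borel mass $\tfrac{1}{n}\PR_{m/n}(X=j)$. The only difference is cosmetic bookkeeping: the paper groups the large power as $\left(k/(n+1)\right)^{m-1}$ where you split $(n-j+1)^{m-j-1}=(n+1)^{m-j-1}\bigl(1-\tfrac{j}{n+1}\bigr)^{m-j-1}$, which is the same estimate.
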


\begin{proof}
From (\ref{withk}),
\begin{equation}
\PR(A_{\pi_2, \dots, \pi_m}=[k])=\frac{\binom{m-1}{n-k} k^{m-n+k-2} (n-k+1)^{n-k-1}}{(n+1)^{m-1}}.
\end{equation}
Since $n-k$ is small relative to $m$ and $n$, asymptotically, this is
\be
\frac{(n-k+1)^{n-k}}{(n-k+1)!}\left(\frac{k}{n+1}\right)^{m-1} \left(\frac{m}{k}\right)^{n-k} \frac{1}{k}\sim \frac{1}{n(n-k+1)!} \exp\left(-(n-k+1)\frac{m}{n}\right) \left((n-k+1)\frac{m}{n}\right)^{n-k}.\notag
\ee
\end{proof}


\begin{corollary}\label{boundary1}
Fix $j$ and take $m$ and $n$ large relative to $j$. For parking function $\pi$ chosen uniformly at random from $\PF(m, n)$, we have
\begin{equation}\label{Qj}
\PR(\pi_1=n-j)\sim \frac{1-\QR_{m/n}(j+2)}{n},
\end{equation}
where $\QR_{m/n}(\ell) = \PR_{m/n}(X\ge\ell)$ is the tail distribution function of Borel-$m/n$.
\end{corollary}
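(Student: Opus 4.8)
The plan is to decompose the event $\{\pi_1 = n-j\}$ according to the feasibility set $A_{\pi_2,\dots,\pi_m}$ and then sum the term-by-term asymptotics supplied by Corollary \ref{bo1}. First I would observe that, because $m$ and $n$ are large relative to $j$ (in particular $j \leq m-1$), we have $n-j \geq n-m+1$. Hence, by Theorem \ref{main1} together with the counting carried out in the proof of Corollary \ref{component}, the value $\pi_1 = n-j$ is admissible exactly when $A_{\pi_2,\dots,\pi_m} = [k]$ for some $k$ with $n-j \leq k \leq n$, and for each such $k$ there is precisely one completion with $\pi_1 = n-j$. Since the events $\{A_{\pi_2,\dots,\pi_m}=[k]\}$ are disjoint as $k$ varies, this yields the exact identity
\begin{equation}
\PR(\pi_1 = n-j) = \sum_{k=n-j}^{n} \PR(A_{\pi_2,\dots,\pi_m} = [k]).
\end{equation}

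Next I would apply Corollary \ref{bo1} to each summand. For $k$ in the range $n-j \leq k \leq n$, the quantity $n-k$ takes one of the fixed values $0, 1, \dots, j$, all small relative to $m$ and $n$, so the hypotheses of Corollary \ref{bo1} hold and
\begin{equation}
\PR(A_{\pi_2,\dots,\pi_m} = [k]) \sim \frac{1}{n}\PR_{m/n}(X = n-k+1).
\end{equation}
Because $j$ is fixed, the sum contains only $j+1$ terms, each with a strictly positive leading asymptotic, so the individual equivalences may be added to give
\begin{equation}
\PR(\pi_1 = n-j) \sim \frac{1}{n}\sum_{k=n-j}^{n}\PR_{m/n}(X = n-k+1).
\end{equation}

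Finally I would reindex by $\ell = n-k+1$, so that $\ell$ runs over $1, \dots, j+1$ as $k$ runs over $n-j, \dots, n$. The sum becomes $\sum_{\ell=1}^{j+1}\PR_{m/n}(X=\ell) = \PR_{m/n}(X \leq j+1)$, and since $X$ is supported on the positive integers, $\PR_{m/n}(X \leq j+1) = 1 - \PR_{m/n}(X \geq j+2) = 1 - \QR_{m/n}(j+2)$. Combining the three displays yields the claimed equivalence $\PR(\pi_1 = n-j) \sim (1 - \QR_{m/n}(j+2))/n$.

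The one point requiring care, and the main (though mild) obstacle, is the passage from the pointwise asymptotics of Corollary \ref{bo1} to the asymptotics of their sum. This is legitimate precisely because $j$ is held fixed, so the number of summands does not grow with $n$, and because each Borel weight $\PR_{m/n}(X=\ell)$ is positive; thus the error terms $o\!\left(\tfrac1n \PR_{m/n}(X=\ell)\right)$ aggregate to $o$ of the total. Were $j$ allowed to grow with $n$, one would instead need uniform control of the error in Corollary \ref{bo1}, which the stated asymptotic alone does not provide.
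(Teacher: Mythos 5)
Your proof is correct and follows essentially the same route as the paper: decompose $\{\pi_1=n-j\}$ over the disjoint events $A_{\pi_2,\dots,\pi_m}=[k]$ for $n-j\le k\le n$, apply Corollary \ref{bo1} termwise, and reindex the resulting Borel weights into the tail sum. The only (immaterial) difference is that the paper splits off the boundary term $k=n$ and evaluates $\PR(\pi_1=n)=n^{m-2}/(n+1)^{m-1}$ directly, whereas you absorb it into Corollary \ref{bo1} via the case $n-k=0$, which that corollary's statement and proof indeed cover.
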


\begin{proof}
If $\pi_1=n-j$, then $A_{\pi_2, \dots, \pi_m}=[k]$ for some $k \geq n-j$. From Corollary \ref{bo1}, this implies that
\begin{align}
\PR(\pi_1=n-j)&=\PR(A_{\pi_2, \dots, \pi_m}=[n])+\sum_{k=n-j}^{n-1}\PR(A_{\pi_2, \dots, \pi_m}=[k])\notag \\
&\sim \PR(\pi_1=n)+\frac{1}{n}\left(\QR_{m/n}(2)-\QR_{m/n}(j+2)\right).
\end{align}

Hence we only need to check the boundary case:
\begin{equation}
\PR(\pi_1=n)=\frac{n^{m-2}}{(n+1)^{m-1}}\sim \frac{1}{ne^{m/n}}=\frac{\PR_{m/n}(X=1)}{n}=\frac{1-\QR_{m/n}(2)}{n}.
\end{equation}
\end{proof}

It was shown in Diaconis and Hicks \cite{DH} that for classical parking functions, the asymptotic tendency of $\pi_1$ on the left end mirrors that on the right end. For generalized parking functions, however, it is a completely different story. The next corollaries show that $\pi_1$ stays at a constant value close to $1/n$ for a long time and then exhibits Poisson behavior when it deviates from this value. For explicitness, we will take $m=cn$ for some $0<c<1$, but the strict equality may be relaxed, and similar asymptotics will hold. In particular, the Poisson tendency will stay.

\begin{corollary}\label{bo2}
Let $k \geq n-m+1$ and $k-(n-m+1)$ fixed and small relative to $m$ and $n$ with $m=cn$ for some $0<c<1$. For parking function $\pi$ chosen uniformly at random from $\PF(m, n)$, we have
\begin{equation}
\PR(A_{\pi_2, \dots, \pi_m}=[k]) \sim \exp(n(1-c)/e)\frac{c^{cn-2}}{(1-c)e^c}\frac{1}{n^2}\PR(Y=k-n+m-1),
\end{equation}
where $Y$ is a Poisson$(n(1-c)/e)$ random variable.
\end{corollary}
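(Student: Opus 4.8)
The quantity to analyze is the same exact expression proved in Corollary~\ref{bo1}, namely
\begin{equation}\label{exact-start}
\PR(A_{\pi_2, \dots, \pi_m}=[k])=\frac{\binom{m-1}{n-k} k^{m-n+k-2} (n-k+1)^{n-k-1}}{(n+1)^{m-1}},
\end{equation}
so the plan is to re-expand the very same formula but now in the opposite regime, where it is $k-(n-m+1)$ (equivalently $m-n+k-1$) that is held fixed and small, rather than $n-k$. First I would set $\ell:=k-(n-m+1)=m-n+k-1$, so $\ell$ is the small parameter and $n-k=m-1-\ell$ is the \emph{large} one; note $\ell=k-n+m-1$ is exactly the argument of the Poisson variable $Y$ in the claim. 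Substituting $k=n-m+1+\ell$ and $m=cn$ into \eqref{exact-start}, the factors become $k^{m-n+k-2}=k^{\ell-1}$ (with small exponent), $(n-k+1)^{n-k-1}=(m-\ell)^{m-2-\ell}$ (with large exponent), and $\binom{m-1}{n-k}=\binom{m-1}{m-1-\ell}=\binom{m-1}{\ell}$.

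**Key asymptotic steps.** The dominant contribution comes from the interplay of $(m-\ell)^{m-2-\ell}$ and $(n+1)^{m-1}$ in the denominator. I would factor $(m-\ell)^{m-2-\ell}=m^{m-2-\ell}(1-\ell/m)^{m-2-\ell}$ and use $(1-\ell/m)^{m}\to e^{-\ell}$ for fixed $\ell$; this generates the geometric-in-$\ell$ piece that, combined with the $\binom{m-1}{\ell}\approx m^{\ell}/\ell!$ behavior, will assemble into the Poisson probability mass $\PR(Y=\ell)=e^{-\lambda}\lambda^{\ell}/\ell!$. The global prefactor, including the stray $\exp(n(1-c)/e)$ and the $c^{cn-2}/((1-c)e^{c})\cdot n^{-2}$ normalization, should emerge from collecting the surviving powers of $c=m/n$ together with the correction $k^{\ell-1}\sim(n-m)^{\ell-1}=(n(1-c))^{\ell-1}$ and the mismatch between $m^{m-2-\ell}$ and $(n+1)^{m-1}=(n(1+1/n))^{m-1}$. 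The Poisson rate $\lambda=n(1-c)/e$ is precisely what one must verify: I expect $\lambda$ to arise as the effective mean $m\cdot(n-m)/(m e)$-type quantity once the $(1-\ell/m)^{m}\sim e^{-\ell}$ factor is reattributed, and matching it will pin down that the mean per unit $\ell$ is $(n-m)/e=n(1-c)/e$.

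**Main obstacle.** The hard part will be the careful bookkeeping of the non-exponential prefactors: unlike Corollary~\ref{bo1}, where $n-k$ small made $(n-k+1)!$ appear cleanly, here the large exponent sits on $(n-k+1)^{n-k-1}=(m-\ell)^{m-2-\ell}$, so I must expand $(1-\ell/m)^{m-2-\ell}$ to enough accuracy that the $e^{-\ell}$ survives but the subleading $1/m$ corrections are absorbed into the overall $c^{cn-2}$ and $\exp(n(1-c)/e)$ factors without error. In particular, separating the genuinely $\ell$-dependent Poisson shape from the $\ell$-independent normalization requires tracking which $m^{\ell}$, $c^{\ell}$, and $(1-c)^{\ell}$ powers cancel against $\binom{m-1}{\ell}$ and $k^{\ell-1}$; a single misplaced factor of $c$ or $(1-c)$ breaks the identification of the rate as $n(1-c)/e$. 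Once the dust settles I would confirm that summing the claimed right-hand side over $\ell\ge 0$ (equivalently over $k$ from $n-m+1$ upward) is consistent with total probability, as a sanity check on the prefactor.
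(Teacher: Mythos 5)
Your proposal is correct and takes essentially the same route as the paper: both expand the exact probability from Corollary \ref{bo1} with $\ell=m-n+k-1$ held fixed, using $\binom{m-1}{\ell}\sim m^{\ell}/\ell!$, $k^{\ell-1}\sim(n(1-c))^{\ell-1}$, and the extraction of $e^{-\ell}$ and $e^{-c}$ from the large powers, reassembling into $\exp(n(1-c)/e)\,\PR(Y=\ell)$ times the stated prefactor (the paper merely groups the factors as $\bigl(\tfrac{(m-1)k}{n-k+1}\bigr)^{\ell}\bigl(\tfrac{n-k+1}{n+1}\bigr)^{m-1}\tfrac{1}{k(n-k+1)}$ before passing to the limit, which is the same bookkeeping in a different order). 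One caveat: your closing sanity check would be inconclusive rather than confirmatory, since the asymptotic holds only for fixed $\ell$ and is not uniform in $\ell$ (the Poisson rate grows with $n$), so summing the claimed right-hand side over all $\ell\ge 0$ need not be consistent with total probability — the bulk of the $k$-mass lies in the opposite regime covered by Corollary \ref{bo1}.
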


\begin{proof}
From Corollary \ref{component},
\begin{equation}
\PR(A_{\pi_2, \dots, \pi_m}=[k])=\frac{\binom{m-1}{m-n+k-1} k^{m-n+k-2} (n-k+1)^{n-k-1}}{(n+1)^{m-1}}.
\end{equation}
Since $k-(n-m+1)$ is small relative to $m$ and $n$, asymptotically, this is
\begin{align}
&\frac{1}{(m-n+k-1)!}\left(\frac{(m-1)k}{n-k+1}\right)^{m-n+k-1}\left(\frac{n-k+1}{n+1}\right)^{m-1}\frac{1}{k(n-k+1)} \\
&\sim \frac{1}{(k-n+m-1)!} \left(\frac{n(1-c)}{e}\right)^{k-n+m-1}\frac{c^{cn-2}}{(1-c)e^c}\frac{1}{n^2}. \notag
\end{align}
\end{proof}


\begin{corollary}\label{boundary2}
Fix $j$ and take $m$ and $n$ large relative to $j$ with $m=cn$ for some $0<c<1$. For parking function $\pi$ chosen uniformly at random from $\PF(m, n)$, we have
\begin{equation}
\PR(\pi_1=n-m+1+j)\sim \exp(n(1-c)/e)\frac{c^{cn-2}}{(1-c)e^c}\frac{1}{n^2}\left(\PR(Y\geq j)-1\right)+\frac{1}{n},
\end{equation}
where $Y$ is a Poisson$(n(1-c)/e)$ random variable.
\end{corollary}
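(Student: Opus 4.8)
The plan is to follow the same skeleton as the proof of Corollary \ref{boundary1}, decomposing the event $\{\pi_1=n-m+1+j\}$ according to the value of $k$ for which $A_{\pi_2,\dots,\pi_m}=[k]$, but to pass to the \emph{complement} of the relevant range of $k$ rather than summing over it directly. Exactly as in the proof of Corollary \ref{component}, fixing $\pi_1$ to a feasible value and summing over the admissible $k$ gives
\[
\PR(\pi_1=n-m+1+j)=\sum_{k=n-m+1+j}^{n}\PR(A_{\pi_2,\dots,\pi_m}=[k]),
\]
where the summand is the quantity asymptotically evaluated in Corollary \ref{bo2}. The naive move would be to insert that asymptotic into this tail sum, but Corollary \ref{bo2} is valid only when $k-(n-m+1)$ is small, whereas here $k$ runs up to $n$, i.e.\ over $\Theta(n)$ indices on which the Poisson approximation fails. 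This is precisely where the left end differs from the right-end Corollary \ref{boundary1}, whose analogous sum had only finitely many terms.

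To get around this I would first compute the constant left-end value exactly. By Corollary \ref{component}, $\PR(\pi_1=j')$ is the same for every $j'\le n-m+1$; evaluating the count at $j'=n-m+1$, where $\min(n-j',m-1)=m-1$, leaves the full sum $\sum_{s=0}^{m-1}\binom{m-1}{s}(s+1)^{s-1}(n-s)^{m-s-2}$, which is exactly $A_{m-1}(1,n-m+1;-1,-1)$ in the notation of Theorem \ref{Abel}. Applying the closed form (\ref{1}) gives $\#\{\pi:\pi_1=n-m+1\}=(n-m+2)(n+1)^{m-2}$, hence
\[
\PR(\pi_1=n-m+1)=\frac{n-m+2}{(n-m+1)(n+1)}\sim\frac1n
\]
when $m=cn$ (consistent with the claimed formula at $j=0$). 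I would then rewrite the target probability as this constant minus a finite head,
\[
\PR(\pi_1=n-m+1+j)=\PR(\pi_1=n-m+1)-\sum_{i=0}^{j-1}\PR(A_{\pi_2,\dots,\pi_m}=[n-m+1+i]),
\]
which is legitimate since subtracting the two tail sums leaves exactly the indices $k=n-m+1,\dots,n-m+j$.

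Because $j$ is fixed, the head is a sum of $O(1)$ terms, each with $k-(n-m+1)=i\le j-1$ small, so Corollary \ref{bo2} applies term by term with the common prefactor, giving $\PR(A_{\pi_2,\dots,\pi_m}=[n-m+1+i])\sim\exp(n(1-c)/e)\frac{c^{cn-2}}{(1-c)e^c}\frac{1}{n^2}\PR(Y=i)$. Summing the finitely many terms replaces $\PR(Y=i)$ by $\PR(Y\le j-1)=1-\PR(Y\ge j)$, and combining with the constant $1/n$ yields $\frac1n+\exp(n(1-c)/e)\frac{c^{cn-2}}{(1-c)e^c}\frac{1}{n^2}(\PR(Y\ge j)-1)$, as stated. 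The main obstacle is the conceptual point already flagged: one must recognize that the Poisson asymptotic cannot be summed across the whole tail, and instead isolate the $O(1)$-term head while drawing the leading $1/n$ from an exact Abel evaluation. The only remaining care is to confirm that the error terms in the finitely many termwise applications of Corollary \ref{bo2} are of lower order than the prefactor, which is immediate since $j$ is fixed.
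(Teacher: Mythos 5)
Your proposal is correct and follows essentially the same route as the paper's proof: writing $\PR(\pi_1=n-m+1+j)$ as $\PR(\pi_1=n-m+1)$ minus the finite head sum $\sum_{k=n-m+1}^{n-m+j}\PR(A_{\pi_2,\dots,\pi_m}=[k])$, applying Corollary \ref{bo2} term by term to get the prefactor times $\PR(Y<j)=1-\PR(Y\geq j)$, and evaluating the constant boundary value via Abel's identity (\ref{1}) to obtain $\frac{n-m+2}{(n-m+1)(n+1)}\sim\frac{1}{n}$. Your explicit observation that the Poisson asymptotic cannot be summed over the whole tail (which forces the passage to the complement) is exactly the idea implicit in the paper's argument.
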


\begin{proof}
If $\pi_1=n-m+1+j$, then $A_{\pi_2, \dots, \pi_m}=[k]$ for some $k \geq n-m+1+j$. From Corollary \ref{bo2}, this implies that
\begin{align}
\PR(\pi_1=n-m+1+j)&=\PR(\pi_1=n-m+1)-\sum_{k=n-m+1}^{n-m+j}\PR(A_{\pi_2, \dots, \pi_m}=[k])\notag \\
&\sim \PR(\pi_1=n-m+1)-\exp(n(1-c)/e)\frac{c^{cn-2}}{(1-c)e^c}\frac{1}{n^2}\PR(Y<j).
\end{align}

Hence we only need to check the boundary case:
\begin{multline}
\PR(\pi_1=1)=\cdots=\PR(\pi_1=n-m+1)=\\ \frac{1}{(n+1)^{m-1}}\sum_{s=0}^{m-1} \binom{m-1}{s} (n-s)^{m-s-2} (s+1)^{s-1}=\frac{n-m+2}{(n-m+1)(n+1)} \sim \frac{1}{n},
\end{multline}
where the second equality uses Abel's extension of the binomial theorem (\ref{1}) with $n \rightarrow m-1$, $x \rightarrow 1$, and $y\rightarrow n-m+1$.
\end{proof}

Next we examine the moments of $\pi_1$. As $c\rightarrow 1$, the correction terms blow up, contributing to the different asymptotic orders between the generic situation $m \lesssim n$ and the special situation $m=n$.
Moments of $\pi_1$ are related to Ramanujan $Q$-functions, as shown in the proof of Theorem \ref{adapted} below; this connection leads us further to Propositions \ref{cov} and \ref{un}. 
\begin{theorem}\label{mean}
Take $m$ and $n$ large with $m=cn$ for some $0<c<1$. For parking function $\pi$ chosen uniformly at random from $\PF(m, n)$, we have
\begin{equation}
\ER(\pi_1^\ell) = \frac{n^\ell}{\ell+1}\left(1+\left(\frac{1-c+\ell(1-3c)}{2(1-c)}\right)\frac1n+O(\frac1{n^2})\right).
\end{equation}
When $m=n$, on the contrary, we have for the first two moments
\begin{equation}
\ER(\pi_1) = \frac{n}{2}\left(1-\sqrt{\frac{\pi}{2n}} + \frac{10}{3n}+O(n^{-1/2})\right),
\end{equation}
and
\begin{equation}
\ER(\pi_1^2) = \frac{n^2}{3}\left(1-\frac{3\sqrt{2\pi}}{4\sqrt{n}} +\frac{11}{2n}+O(n^{-1/2})\right).
\end{equation}
\end{theorem}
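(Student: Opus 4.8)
The plan is to reduce the moment computation to Abel-type sums via the exact point-mass formula, then evaluate those sums asymptotically. Dividing the count in Corollary \ref{component} by $|\PF(m,n)|=(n-m+1)(n+1)^{m-1}$ from Theorem \ref{number} gives, for every $j\in[n]$,
\[
\PR(\pi_1=j)=\frac{1}{(n+1)^{m-1}}\sum_{s=0}^{\min(n-j,\,m-1)}\binom{m-1}{s}(n-s)^{m-s-2}(s+1)^{s-1}.
\]
Writing $a_s=\binom{m-1}{s}(n-s)^{m-s-2}(s+1)^{s-1}$ and $\ER(\pi_1^\ell)=\sum_{j=1}^n j^\ell\,\PR(\pi_1=j)$, I would swap the order of summation: the weight $a_s$ contributes precisely to those $j$ with $1\le j\le n-s$, so
\[
\ER(\pi_1^\ell)=\frac{1}{(n+1)^{m-1}}\sum_{s=0}^{m-1}a_s\sum_{j=1}^{n-s}j^\ell .
\]
This collapses the double sum into a single sum over $s$ weighted by a power sum in $n-s$.

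Next I would expand the inner sum by Faulhaber's formula, $\sum_{j=1}^{N}j^\ell=\frac{N^{\ell+1}}{\ell+1}+\frac{N^\ell}{2}+\frac{\ell N^{\ell-1}}{12}+\cdots$, with $N=n-s$. Each monomial $(n-s)^r$ multiplied by $a_s$ produces, up to the normalization $(n+1)^{m-1}$, the quantity $\Sigma_r=\sum_{s=0}^{m-1}\binom{m-1}{s}(s+1)^{s-1}(n-s)^{m-s-2+r}$, which is exactly the Abel sum $A_{m-1}(1,\,n-m+1;\,-1,\,r-1)$ of Theorem \ref{Abel}. Thus $\ER(\pi_1^\ell)$ is a fixed finite combination of $\Sigma_1,\dots,\Sigma_{\ell+1}$. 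The low-order cases are immediate: (\ref{2}) gives $\Sigma_1=(n+1)^{m-1}$, so its normalized value is exactly $1$, while (\ref{1}) and (\ref{3}) handle $\Sigma_0$ and $\Sigma_2$; the remaining $\Sigma_r$ with $r\ge3$ are reduced to these base evaluations by iterating the recurrences (\ref{b2}) and (\ref{b3}). After reindexing by $t=m-1-s$, each normalized $\Sigma_r/(n+1)^{m-1}$ is cast as a Ramanujan-type sum $\sum_{t\ge0}\frac{(m-1)!}{(m-1-t)!\,(n+1)^t}\,P_r(t,n)$ with $P_r$ an explicit low-degree polynomial; for instance $\Sigma_2/(n+1)^{m-1}=\sum_{t}\frac{(m-1)!}{(m-1-t)!\,(n+1)^t}(n-m+1+t)$.

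In the generic regime $m=cn$ with $0<c<1$, the ratio of consecutive terms in these sums tends to $c<1$, so the series converge geometrically. I would then Taylor-expand the falling factorial $\frac{(m-1)!}{(m-1-t)!}=(m-1)^t\bigl(1-\binom{t}{2}/(m-1)+\cdots\bigr)$ together with the polynomial factors in $1/n$, and sum the resulting geometric-type series in closed form to extract the expansion to relative order $1/n$. The leading term $\frac{n^\ell}{\ell+1}$ comes from $\frac{1}{\ell+1}\Sigma_{\ell+1}$, and assembling the $1/n$ contributions from all the Faulhaber pieces produces the stated coefficient $\frac{1-c+\ell(1-3c)}{2(1-c)}$; the denominator factor $1-c$ is the signature of the geometric summation $\sum_t c^t=\frac{1}{1-c}$.

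The case $m=n$ is genuinely different and is where the main difficulty lies. Here $c=1$, so $n-m+1=1$ and the geometric decay is lost: the normalized sums become true Ramanujan $Q$-functions, e.g. $\sum_{t}\frac{(n-1)!}{(n-1-t)!\,(n+1)^t}(1+t)$, whose terms decay only like $e^{-t^2/(2n)}$ and whose mass sits at scale $t\sim\sqrt n$. For the first and second moments I would invoke the Stirling-based expansion $Q(n)\sim\sqrt{\pi n/2}-\tfrac13+\tfrac{1}{12}\sqrt{\pi/(2n)}-\cdots$ together with its weighted variants $\sum_t t\,\frac{(n-1)!}{(n-1-t)!\,(n+1)^t}$ and the $t^2$ analogue, which feed the half-integer powers of $n$ and the $\sqrt{\pi}$ coefficients in $\ER(\pi_1)$ and $\ER(\pi_1^2)$. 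The principal obstacle throughout is the asymptotic bookkeeping: several Faulhaber terms contribute at the same order in $n$, so the $1/n$ corrections must be tracked uniformly, and the transition $c\to1$ forces one to switch from geometric summation to the precise Stirling expansion of the Ramanujan $Q$-function.
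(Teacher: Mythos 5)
Your setup and your treatment of the $m=n$ case essentially coincide with the paper's own proof. The paper likewise starts from Corollary \ref{component}, swaps the order of summation over $j$ and $s$, applies the two-term Faulhaber expansion (its equation (\ref{mom3})), recognizes the result as $\tfrac{1}{\ell+1}A_{n-1}(1,1;-1,\ell)+\tfrac12 A_{n-1}(1,1;-1,\ell-1)$, and for $m=n$ evaluates via (\ref{2}) and (\ref{3}) followed by Tricomi's expansion of $\sum_{j\le N}N^j/j!$ --- which carries the same information as the Ramanujan-$Q$ expansion you invoke. Where you genuinely diverge is the regime $m=cn$ with $c<1$: the paper does \emph{not} route that case through the Abel identities at all. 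Instead it approximates $\binom{m-1}{s}(n-s)^{m-s-2}$ by $\tfrac{m^s}{s!}\,n^{m-s-2}e^{-sm/n}(1+\cdots)$ and evaluates the resulting series using the tree function $F(z)=\sum_s \tfrac{z^s}{s!}(s+1)^{s-1}$ at $z=ce^{-c}$, via $F(ce^{-c})=e^c$, $F'(ce^{-c})=e^{2c}/(1-c)$, $F''(ce^{-c})=(3-2c)e^{3c}/(1-c)^3$; there the $1/(1-c)$ factors come from derivatives of $F$ rather than from geometric summation. This buys uniformity in $\ell$: every moment needs only $F$, $F'$, $F''$, since $\ell$ enters only through the polynomial coefficients $A,B,C$ in (\ref{lamb}). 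Your route instead eliminates the weight $(s+1)^{s-1}$ exactly and sums geometric-type series; for $\ell=1$ this is clean (only $\Sigma_1,\Sigma_2$ are needed, both with listed closed forms, and your expansion indeed reproduces $\tfrac n2 + \tfrac{1-2c}{2(1-c)}+O(1/n)$, matching the stated coefficient).

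The one real soft spot is your handling of $\ell\ge 2$ in the generic regime. There you need $\Sigma_{\ell+1}$ with $\ell+1\ge 3$, and your claim that iterating (\ref{b2})--(\ref{b3}) casts each such normalized sum as a single Ramanujan-type sum $\sum_t \tfrac{(m-1)!}{(m-1-t)!\,(n+1)^t}P_r(t,n)$ is asserted, not proved: a single application of (\ref{b3}) produces \emph{nested} sums, not a single sum. The claim is in fact true --- the nested sums collapse because of identities of the form $\binom{m-1}{s}\,s!\,\binom{m-1-s}{u}\,(m-1-s-u)!=\tfrac{(m-1)!}{u!}$, after which the inner index can be summed out --- but this collapse is a lemma your write-up would need to state and prove, and its bookkeeping grows with $\ell$. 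That growing case analysis is exactly the complication the paper's tree-function device was designed to avoid, so you should either supply the collapse lemma or adopt the generating-function evaluation for the $c<1$ case.
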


\begin{proof}
First consider the case $m=cn,~~c<1$.
From Corollary \ref{component}, for the $\ell$-th moment we have
\begin{align}
&\sum_{j=1}^n j^\ell \#\{\pi \in \PF(m, n): \pi_1=j\}=(n-m+1) \sum_{j=1}^n j^\ell \sum_{s=0}^{\min(n-j, m-1)} \binom{m-1}{s} (n-s)^{m-s-2} (s+1)^{s-1}\\
&= (n-m+1) \sum_{s=0}^{m-1} \binom{m-1}{s} (n-s)^{m-s-2} (s+1)^{s-1} \sum_{j=1}^{n-s} j^\ell \notag \\
&= (n-m+1) \sum_{s=0}^{m-1} \binom{m-1}{s} (n-s)^{m-s-2} (s+1)^{s-1} \frac{(n-s)^{\ell+1}}{\ell+1}\left(1+\frac{\ell+1}{2(n-s)}+O(n^{-2})\right)\label{mom3} \\
&\label{before}=\frac{n-m+1}{\ell+1} \sum_{s=0}^{m-1} \frac{m^s}{s!} n^{m-s+\ell-1}e^{-sm/n} (s+1)^{s-1}\left(1-\frac{s^2+s}{2m}+\frac{s(s-\ell+1)}{n}-\frac{s^2c}{2n}+\frac{\ell+1}{2n}+O(n^{-2})\right).
\end{align}

The tree function $F(z) = \sum_{s=0}^\infty \frac{z^s}{s!}(s+1)^{s-1}$, related to the Lambert function,
satisfies $F(ce^{-c}) = e^c$. By the chain rule its first and second derivatives therefore satisfy
\begin{align}
F'(ce^{-c}) = \frac{e^{2c}}{1-c}, \hspace{1cm} F''(ce^{-c}) = \frac{3-2c}{(1-c)^3}e^{3c}.
\end{align}
Now (\ref{before}) is of the form
\begin{equation}
=\frac{(n-m+1)n^{m+\ell-1}}{\ell+1}\left(\sum_{s=0}^\infty\frac{(ce^{-c})^s}{s!}(s+1)^{s-1}(1+\frac1n(A+Bs+Cs^2)+O(n^{-2}))\right),\label{lamb}
\end{equation}
where $A=\frac{\ell+1}2$, $B=-\frac{1}{2c}-\ell+1$, and
$C=-\frac{1}{2c}+1-\frac{c}{2}.$
Using $F$ this can be written (with $z=ce^{-c}$)
\begin{equation}
=\frac{(n-m+1)n^{m+\ell-1}}{\ell+1}(F(z) + \frac1n\Big(AF(z)+BzF'(z) + C(z^2F''(z)+zF'(z))\Big)+O(n^{-2})).
\end{equation}
Dividing by $|\PF(m,n)| = (n-m+1)(n+1)^{m-1}$ and simplifying we get
$$\frac{n^\ell}{\ell+1}\left(1+\frac1n\left(\frac{1-c+\ell(1-3c)}{2(1-c)}\right)+O(\frac1{n^2})\right)$$
for the $\ell$-th moment.

Next for the $m=n$ case. We recognize that (\ref{mom3}) is asymptotically
$\frac{1}{\ell+1}A_{n-1}(1, 1; -1, \ell)+\frac{1}{2}A_{n-1}(1, 1; -1, l-1)$.
Using (\ref{2}) and (\ref{3}), when $\ell=1$ this is
\begin{align}
&\frac12 (n+1)^{n-1}+\frac12\sum_{s=0}^{n-1}\binom{n-1}{s}(n+1)^s(n-s)(n-s-1)!\notag\\
&=\frac12 (n+1)^{n-1}+\frac{(n-1)!}2\sum_{s=0}^{n-1}\frac{(n+1)^s}{s!}(n-s).
\end{align}
We use the following identity involving the incomplete Gamma function, and its asymptotic expansion (see Tricomi \cite{Tri}):
\begin{equation}
\sum_{j=0}^N\frac{N^j}{j!} = \frac{e^N}{N!}\int_{N}^\infty t^Ne^{-t}dt= e^N\left(\frac12+\frac{2}{3\sqrt{2\pi N}}-\frac1{24N}+O(N^{-3/2})\right).
\end{equation}
With $N=n+1$ in this identity a short calculation yields for the first moment
\begin{equation}
\ER(\pi_1)=\frac{1}{2}+\frac{n}{2}\left(1-\sqrt{\frac{\pi}{2n}} + \frac{7}{3n}+O(n^{-1/2})\right).
\end{equation}
A similar but more involved calculation yields the second moment.
\end{proof}

\subsection{Implications for multiple coordinates}
Hashing with linear probing is an efficient method for storing and retrieving data in computer programming, and can be described as follows. A table with $m$ cells $1, \dots, m$ is set up. We insert $n<m$ items $1, \dots, n$ sequentially into the table, with a hash value $h(i) \in [m]$ assigned to the $i$th item. Two or more items may have the same hash value and hence cause a hash collision, and the linear probing algorithm resolves hash collisions by sequentially searching the hash table for a free location. In detail, starting from an empty table, for $1\leq i\leq n$, we place the $i$th item into cell $h(i)$ if it is empty, and otherwise we try cells $h(i)+1, h(i)+2, \dots$ until an empty cell is found; all positions being interpreted modulo $m$. If the $i$th item is inserted into cell $c(i)$, then its displacement $c(i)-h(i)$ (modulo $m$), which
is the number of unsuccessful probes when this item is placed, is a measure of the cost of inserting it and also a measure of the cost of later finding the item in the table. The total displacement $\sum_{i=1}^n (c(i)-h(i))$ (modulo $m$) is thus a measure of both the cost of constructing the table and of using it. For a comprehensive description of hashing, as well as other storage and retrieval methods, see Knuth \cite[Section 6.4]{Knuth2}. We recognize that the hashing problem is equivalent to our parking problem, and the correspondence between the two settings (hashing vs. parking) is: $n\rightarrow m$ and $m \rightarrow n+1$. In particular, a confined hashing where the $n$ items are successfully inserted into the table leaving the $m$th cell empty exactly corresponds to a parking function with $m$ cars and $n$ available spots.

In Flajolet et al. \cite{Fla}, using moment analysis, limit statistics for the displacement of the hash table were established. They also demonstrated that instead of approaching the Airy distribution as for $m=n$, when $m\sim cn$ for some $0<c<1$, the limit law is Gaussian.
Their findings were further extended by Janson \cite{Janson}. See also Knuth \cite{Knuth1} for related results. As in Theorem \ref{mean} for the asymptotics of moments, a sharp change occurs as $c \rightarrow 1$ for the asymptotics of displacement. The following theorem describes the mean and variance of the displacement of the parking function.

\begin{theorem}[adapted from Flajolet et al. \cite{Fla}]\label{adapted}
Take $m$ and $n$ large with $m=cn$ for some $0<c<1$. For parking function $\pi$ chosen uniformly at random from $\PF(m, n)$, we have
\begin{equation}
\ER(\disp(\pi))\sim \frac{c^2}{2(1-c)}n+\frac{c(c^2-c-1)}{2(1-c)^3}, \hspace{1cm} \Var(\disp(\pi))\sim \frac{c(6c-6c^2+4c^3-c^4)}{12(1-c)^4} n.
\end{equation}
Contrarily, when $m=n$,
\begin{equation}
\ER(\disp(\pi))\sim \frac{\sqrt{2\pi}}{4} n^{3/2}-\frac{7}{6}n+\frac{19\sqrt{2\pi}}{48}n^{1/2}, \hspace{1cm} \Var(\disp(\pi))\sim \frac{10-3\pi}{24} n^3+\frac{184-57\pi}{144}n^2.
\end{equation}
Note that $10/3-\pi$ coincides with the variance of the Airy distribution.
\end{theorem}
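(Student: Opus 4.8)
The strategy is to transport the results of Flajolet et al.\ \cite{Fla} through the hashing--parking dictionary recalled just above. Under that correspondence a uniformly random $\pi \in \PF(m,n)$ is identified with a random confined linear-probing hash table having $M := n+1$ cells and $N := m$ inserted keys (the $M$th cell left empty), and the total construction cost of the table is exactly $\disp(\pi)$. Flajolet et al.\ provide both exact formulas, in terms of Ramanujan $Q$-functions, and asymptotic expansions for the mean and variance of this cost; the proof then consists of rewriting those expressions with $N=m$, $M=n+1$ and expanding in $n$, matching the two regimes to their two asymptotic regimes.

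For the regime $m=cn$ with $0<c<1$, the load factor is $\alpha := N/M = cn/(n+1)$, which stays bounded away from $1$, so we land in the Gaussian regime of \cite{Fla}. There the mean cost is $\frac{\alpha^2}{2(1-\alpha)}M$ to leading order and the variance is a rational function of $\alpha$ times $M$. Substituting $\alpha \to c$ and $M \to n$ immediately reproduces the leading term $\frac{c^2}{2(1-c)}n$ and the stated variance coefficient. To pin down the constant correction $\frac{c(c^2-c-1)}{2(1-c)^3}$ I would instead substitute the exact values $\alpha = c\bigl(1-\tfrac1n+O(n^{-2})\bigr)$ and $M=n+1$, expand one order further, and collect, noting that the $O(1/n)$ perturbation of $\alpha$ is amplified by the factor $(1-\alpha)^{-3}$ and therefore feeds down to the constant level.

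For $m=n$ the table is essentially full, $N=M-1$, and the cost is no longer of linear order: Flajolet et al.\ (see also Knuth \cite{Knuth2}) show that $\disp(\pi)$, normalized by $M^{3/2}$, converges to an Airy (Brownian-excursion-area) law, whose variance in their normalization is $10/3-\pi$, accounting for the leading coefficient $\frac{10-3\pi}{24}$ of $n^3$, while its mean accounts for the coefficient $\frac{\sqrt{2\pi}}{4}=\sqrt{\pi/8}$ of $n^{3/2}$. Their exact formulas for $\ER(\disp(\pi))$ and $\Var(\disp(\pi))$ are finite sums of Ramanujan $Q$-type functions $Q(M), Q_1(M), Q_2(M),\dots$; inserting the incomplete-Gamma asymptotic expansion of $\sum_{j\le M} M^j/j!$ used already in the $m=n$ case of Theorem \ref{mean} (with $M=n+1$) and simplifying produces the full expansions $\frac{\sqrt{2\pi}}{4}n^{3/2}-\frac76 n+\frac{19\sqrt{2\pi}}{48}n^{1/2}$ and $\frac{10-3\pi}{24}n^3+\frac{184-57\pi}{144}n^2$.

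The main obstacle is not conceptual but the asymptotic bookkeeping of the subleading terms. The leading orders fall out of the correspondence essentially for free, but the constant term in the $c<1$ mean, and the $n$ and $n^{1/2}$ terms (respectively the $n^2$ term) in the full-table mean (variance), require the complete asymptotic expansions of the $Q$-functions, a consistent decision about how many orders of $\alpha=cn/(n+1)$ and $M=n+1$ to retain, and careful propagation of error terms through the $(1-\alpha)^{-k}$ singularities. Reusing the very same $Q$-function expansions that powered the proof of Theorem \ref{mean} gives a convenient internal consistency check on these computations.
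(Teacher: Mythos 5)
Your proposal is correct and follows essentially the same route as the paper: both transport the exact cost formulas of Flajolet et al.\ through the hashing--parking dictionary (a confined table with $M=n+1$ cells and $N=m$ keys, so total insertion cost equals $\disp(\pi)$), express the mean and second moment via Ramanujan-type $Q$-functions ($Q_0(n+1,cn-1)$ in the sparse regime, $Q(n+1)$ when $m=n$), and then extract the stated expansions by the same incomplete-Gamma (Tricomi) asymptotics already used for Theorem \ref{mean}. The paper's proof is just a terser version of this, recording the adapted exact formulas and citing ``standard asymptotic analysis'' for the bookkeeping you describe.
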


\begin{proof}
These asymptotic formulas are adapted from Theorems 1, 2, 4 and 5 of \cite{Fla},
via some identities of the $Q_r$-function, where
\begin{equation}
Q_r(m, n)=\binom{r}{0}+\binom{r+1}{1} \frac{n}{m}+\binom{r+2}{2} \frac{n(n-1)}{m^2}+\cdots
\end{equation}
is a generalized hypergeometric function of the second kind, of which the Ramanujan $Q$-function is a special case with $Q(n)=Q_0(n, n-1)$. For $m=n$, we derive that
\begin{equation}
\ER(\disp(\pi))=\frac{n+1}{2}(Q(n+1)-1)-\frac{n}{2}.
\end{equation}
\begin{equation}
\ER((\disp(\pi))^2)=\frac{n}{12}\left(5n^2+13n+4-(14n+8)\frac{n+1}{n}\left(Q(n+1)-1\right)\right).
\end{equation}
While for $m=cn$ for some $0<c<1$, we have
\begin{equation}
\ER(\disp(\pi))=\frac{cn}{2}\left(Q_0(n+1, cn-1)-1\right).
\end{equation}
\begin{align}
&\ER((\disp(\pi))^2)=\frac{cn}{12}\left[((1-c)n+1)^3+(cn+3)((1-c)n+1)^2\right. \\
&\hspace{1cm}+(8cn+1)((1-c)n+1)+5c^2n^2+4cn-1\notag \\
&\hspace{.5cm}\left.-\left(((1-c)n+1)^3+4((1-c)n+1)^2+(6cn+3)((1-c)n+1)+8cn\right)Q_0(n+1, cn-1)\right]. \notag
\end{align}
The conclusion then follows from standard asymptotic analysis.
\end{proof}

For a parking function $\pi$ chosen uniformly at random from $\PF(n, n)$,
\begin{equation}
\disp(\pi)=\binom{n+1}{2}-(\pi_1+\cdots+\pi_n),
\end{equation}
which implies that, by permutation symmetry,
\begin{equation}
\ER(\disp(\pi))=\binom{n+1}{2}-n\ER(\pi_1).
\end{equation}
This observation is confirmed by the asymptotics established in Theorems \ref{mean} and \ref{adapted}. Furthermore, the following result may be obtained, which confirms the intuition that interactions between different entries of the parking function are relatively weak.  Again due to permutation symmetry, $\pi_1, \pi_2$ may be interpreted as any coordinates $\pi_i, \pi_j$.

\begin{proposition}\label{cov}
Take $n$ large. For parking function $\pi$ chosen uniformly at random from $\PF(n, n)$, we have
\begin{equation}
\Var(\pi_1) \sim \frac{1}{12}n^2+\frac{4-3\pi}{24}n, \hspace{1cm}
\Cov(\pi_1, \pi_2) \sim \frac{8-3\pi}{24}n+\frac{208-57\pi}{144}.
\end{equation}
\end{proposition}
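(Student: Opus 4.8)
The plan is to derive both statements from the permutation symmetry of parking functions together with the moment and displacement asymptotics already established, without introducing any new machinery. For the variance of a single coordinate I would simply write $\Var(\pi_1)=\ER(\pi_1^2)-\ER(\pi_1)^2$ and insert the two expansions from Theorem~\ref{mean} in the case $m=n$.

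The key check here is that the half-integer powers of $n$ cancel. Expanding the first moment as $\ER(\pi_1)=\frac n2-\frac{\sqrt{2\pi}}4 n^{1/2}+\frac53+\cdots$, one finds
\[
\ER(\pi_1)^2=\frac{n^2}4-\frac{\sqrt{2\pi}}4\,n^{3/2}+\Big(\frac\pi8+\frac53\Big)n+\cdots,
\]
whose $n^{3/2}$ coefficient coincides with that of $\ER(\pi_1^2)=\frac{n^2}3-\frac{\sqrt{2\pi}}4\,n^{3/2}+\frac{11}6\,n+\cdots$. Subtracting leaves $\Var(\pi_1)\sim\frac1{12}n^2+(\frac16-\frac\pi8)n=\frac1{12}n^2+\frac{4-3\pi}{24}n$, as claimed.

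For the covariance I would exploit the identity $\disp(\pi)=\binom{n+1}2-(\pi_1+\cdots+\pi_n)$ valid for $\pi\in\PF(n,n)$. Since $\disp(\pi)$ and $\sum_i\pi_i$ differ by a constant they share the same variance, and permutation symmetry gives
\[
\Var(\disp(\pi))=\Var\Big(\sum_{i=1}^n\pi_i\Big)=n\,\Var(\pi_1)+n(n-1)\,\Cov(\pi_1,\pi_2).
\]
Solving yields $\Cov(\pi_1,\pi_2)=\frac{1}{n(n-1)}\big(\Var(\disp(\pi))-n\,\Var(\pi_1)\big)$. Substituting the $m=n$ expansion $\Var(\disp(\pi))\sim\frac{10-3\pi}{24}n^3+\frac{184-57\pi}{144}n^2$ from Theorem~\ref{adapted} and the value of $\Var(\pi_1)$ just obtained, then expanding $\frac1{n(n-1)}=n^{-2}(1+n^{-1}+\cdots)$ and collecting the $n$ and constant orders, produces $\Cov(\pi_1,\pi_2)\sim\frac{8-3\pi}{24}n+\frac{208-57\pi}{144}$.

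The main obstacle is bookkeeping precision rather than any conceptual difficulty. To pin down the $O(n)$ correction to $\Var(\pi_1)$ I must retain the single-coordinate moment expansions of Theorem~\ref{mean} through their $n$-order terms and verify the \emph{exact} cancellation of the two $n^{3/2}$ contributions; and to recover the constant term of $\Cov(\pi_1,\pi_2)$ I must correctly combine the $n^3$ coefficient of $\Var(\disp(\pi))$ (through the $1/n$ correction coming from $\frac1{n(n-1)}$) with the $n^2$ coefficients of both $\Var(\disp(\pi))$ and $n\,\Var(\pi_1)$. No step requires a genuinely new tool beyond the elementary symmetry identity for the displacement.
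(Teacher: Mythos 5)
Your proposal is correct and follows essentially the same route as the paper: the paper likewise computes $\Var(\pi_1)$ from the two moment expansions of Theorem~\ref{mean}, and obtains $\Cov(\pi_1,\pi_2)$ from the symmetry identity $\Var(\disp(\pi))=n\Var(\pi_1)+n(n-1)\Cov(\pi_1,\pi_2)$ combined with Theorem~\ref{adapted}. The only difference is that you carry out the bookkeeping (cancellation of the $n^{3/2}$ terms, expansion of $1/(n(n-1))$) explicitly, which the paper declares ``immediate.''
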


\begin{proof}
By symmetry,
\begin{equation}
\Var(\disp(\pi))=n\Var(X_1)+n(n-1)\Cov(X_1, X_2).
\end{equation}
The asymptotics are then immediate when we combine the results from Theorems \ref{mean} and \ref{adapted}.
\end{proof}

At the moment we do not have an analogous statement for $\PF(m,n)$ (for the variance however
see Theorem \ref{mean}).

For a parking function $\pi$ chosen uniformly at random from $\PF(m, n)$ there are $n-m$ spots that are unattempted by any car. The following proposition gives the average locations of the unattempted spots.

\begin{proposition}\label{un}
Consider parking function $\pi$ chosen uniformly at random from $\PF(m, n)$. Denote by $k_1(\pi), \dots, k_{n-m}(\pi)$ with $0:=k_0<k_1<\cdots<k_{n-m}<k_{n-m+1}:=n+1$ the $n-m$ unattempted spots of $\pi$. We have
\begin{equation}
\ER(k_i(\pi))=i\frac{n+1}{n-m+1}, \hspace{.2cm} \forall i=1, \dots, n-m.
\end{equation}
\end{proposition}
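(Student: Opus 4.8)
The plan is to exploit the rotational symmetry already present in Pollak's circle argument (Theorem \ref{number}) rather than to compute the expectation directly from the distribution of the $k_i$. Recall the setup: we add a phantom spot $n+1$, arrange the $n+1$ spots in a circle, and allow preferences in $\mathbb{Z}/(n+1)\mathbb{Z}$. For any preference sequence $\pi \in \{1, \dots, n+1\}^m$, exactly $n-m+1$ of its cyclic rotations $\pi + k(1, \dots, 1) \pmod{n+1}$ are parking functions, and the set of unattempted (missing) spots rotates rigidly with $k$. This rigid rotation is the key structural fact I would use.

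First I would observe that the uniform measure on $\PF(m,n)$ is the image, under this rotation correspondence, of the uniform measure on preference sequences together with a uniformly chosen valid rotation. Concretely, sample $\pi$ uniformly from all $(n+1)^m$ preference sequences and then choose uniformly one of its $n-m+1$ parking-function rotations; the result is a uniform element of $\PF(m,n)$. Under this coupling, the $n-m+1$ missing spots of the resulting parking function on the circle $\mathbb{Z}/(n+1)\mathbb{Z}$ — where we now include the phantom spot $n+1 \equiv 0$ as always missing — form a set that is, by construction, uniformly rotated around the circle. More precisely, the collection of $n-m+1$ holes (including the one sent to position $n+1$) is invariant in distribution under the cyclic shift by $1$.

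The heart of the argument is then a symmetry computation on the circle. Label the holes so that position $n+1 \equiv 0$ is the ``$0$th'' hole $k_0$, and $k_1 < \cdots < k_{n-m}$ are the remaining holes in $\{1, \dots, n\}$; the gaps $g_i := k_i - k_{i-1}$ (cyclically, with $g_{n-m+1} = (n+1) - k_{n-m}$) are the lengths of the $n-m+1$ arcs between consecutive holes. By the rotational invariance of the hole pattern, these $n-m+1$ gaps are exchangeable, so each has mean $(n+1)/(n-m+1)$ since they sum to $n+1$. Since $k_i = g_1 + \cdots + g_i$, linearity of expectation gives $\ER(k_i) = i \cdot \frac{n+1}{n-m+1}$, which is exactly the claim.

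The main obstacle — and the step requiring the most care — is justifying the exchangeability of the gaps rigorously, i.e.\ verifying that the uniform parking function, once the phantom hole at position $0$ is adjoined, genuinely produces a hole pattern whose cyclic-shift invariance makes the $n-m+1$ gaps exchangeable rather than merely equal in mean. One must confirm that the two-step sampling (uniform preference sequence, then uniform valid rotation) reproduces the uniform law on $\PF(m,n)$ and that conditioning on a sequence being a parking function does not break the circular symmetry of its missing-spot set. Once that invariance is established, everything else is elementary; the same circle argument that counts $\PF(m,n)$ thus also pins down the average hole locations with no further computation.
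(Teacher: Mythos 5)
Your proof is correct, but it takes a genuinely different route from the paper's. The paper computes $\ER(k_i(\pi))$ head-on: using the shuffle decomposition of Section \ref{pfs}, the event $\{k_i(\pi)=k\}$ splits $\pi$ into $\alpha\in\PF(k-i,k-1)$ and $\beta\in\PF(m-k+i,n-k)$ shuffled together, giving an explicit formula for $\PR(k_i(\pi)=k)$; the resulting sum is then evaluated exactly via Abel's binomial theorem (identities (\ref{b1}) and (\ref{2}) of Theorem \ref{Abel}). Your argument avoids that computation entirely by pushing Pollak's circle trick one step further: sampling a uniform preference sequence $\sigma$ and then a uniform valid rotation does reproduce the uniform law on $\PF(m,n)$ (each $\pi^*\in\PF(m,n)$ receives probability $\frac{n+1}{(n+1)^m}\cdot\frac{1}{n-m+1}$), and under this coupling the hole set of $\pi$ on the circle is $H(\sigma)-h^*$ with $h^*$ a uniformly chosen hole of $\sigma$. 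One caveat in your wording: the hole set of $\pi$ itself is \emph{not} invariant in distribution under the shift by $1$ (it always contains the phantom spot $n+1\equiv 0$); what the coupling directly yields is that the gap vector $(g_1,\dots,g_{n-m+1})$ of $\pi$ is a uniform cyclic rotation of the cyclic gap sequence of $\sigma$, hence invariant under \emph{cyclic} shifts — cyclic exchangeability, not full exchangeability. But that weaker, rigorously justified statement is all you need: the $g_i$ are identically distributed, they sum deterministically to $n+1$, so $\ER(g_i)=(n+1)/(n-m+1)$ and $\ER(k_i)=\ER(g_1+\cdots+g_i)=i\frac{n+1}{n-m+1}$. (Full exchangeability does in fact hold, but to see it one would invoke Proposition \ref{un2}, whose count is a symmetric function of the gap lengths.) As for what each approach buys: the paper's calculation produces the exact distribution of $k_i$ as a byproduct and exercises the Abel-identity machinery used throughout Section 3, whereas your argument is softer and more conceptual — it gives the exact mean with essentially no computation, explains why the answer is exactly $i\frac{n+1}{n-m+1}$ (the holes, phantom included, are on average evenly spread around the circle), and the cyclic exchangeability of the gaps is a reusable structural fact for other statistics of the unattempted spots.
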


\begin{proof}
Take $k_i=k$, where $k$ ranges from $i$ to $m+i$. Recall the parking function shuffle construction introduced in Section \ref{pfs}. We recognize that the unattempted spot $k$ breaks up the parking function $\pi$ into two components $\alpha$ and $\beta$, with $\alpha \in \PF(k-i, k-1)$ and $\beta \in \PF(m-k+i, n-k)$, and $\pi$ a shuffle of the two. It follows that
\begin{align}
\ER(k_i(\pi))&=\frac{1}{(n-m+1)(n+1)^{m-1}}\sum_{k=i}^{m+i} k \binom{m}{k-i} i k^{k-i-1} (n-m-i+1) (n-k+1)^{m-k+i-1}\\
&=\frac{i(n-m-i+1)}{(n-m+1)(n+1)^{m-1}} \sum_{s=0}^m \binom{m}{s} (i+s)^{s}(n-i-s+1)^{m-s-1} \notag \\
&=i\frac{n+1}{n-m+1}, \notag
\end{align}
where the last step is from applying Abel's extension of the binomial theorem (\ref{b1}) and (\ref{2}) with $n\rightarrow m$, $x\rightarrow n-m-i+1$, and $y\rightarrow i$.
\end{proof}

As in the special case $m=n$, we may show that for a parking function $\pi$ chosen uniformly at random from $\PF(m, n)$,
\begin{align}
\disp(\pi)&=\sum_{i=0}^{n-m} \left[\binom{k_{i+1}-k_i}{2}-\sum_{k_i<\pi_s<k_{i+1}} (\pi_s-k_i)\right] \\
&=\binom{n+1}{2}-(\pi_1+\cdots+\pi_m)-(k_1+\cdots+k_{n-m}), \notag
\end{align}
which implies that, by permutation symmetry,
\begin{equation}
\ER(\disp(\pi))=\binom{n+1}{2}-m\ER(\pi_1)-\frac{1}{2}(n+1)(n-m).
\end{equation}
This observation is again confirmed by the asymptotics established in Theorems \ref{mean} and \ref{adapted}.

Using more esoteric probability, the next theorem then examines the parking phenomenon when the unattempted parking spots are fixed at specified locations.

\begin{theorem}\label{Brownian}
Choose $\pi$ uniformly at random from $\PF(m, n)$ subject to the constraint that fixed $n-m$ spots $k_1, \dots, k_{n-m}$ with $0:=k_0<k_1<\cdots<k_{n-m}<k_{n-m+1}:=n+1$ are unattempted by any car. Define
\begin{equation}
F_i^\pi(x)=\frac{1}{k_{i+1}-k_i} \#\{s: 0<\pi_s-k_i\leq (k_{i+1}-k_i)x\}, \hspace{.2cm} 0\leq i\leq n-m.
\end{equation}
Then for large enough $k_{i+1}-k_i$, we have
\begin{equation}
\sqrt{k_{i+1}-k_i} \left[F_i^{\pi}(x)-x\right]_{0\leq x\leq 1} \weakto (E_x)_{0\leq x\leq 1},
\end{equation}
where $E_x$ is the Brownian excursion and $\weakto$ denotes weak convergence.
\end{theorem}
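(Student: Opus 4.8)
The plan is to reduce the statement to a single block and then recognize the rescaled empirical preference process of one block as the rescaled queue-length process of a uniform classical parking function, whose scaling limit is the normalized Brownian excursion.

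First I would invoke the decomposition recorded in Proposition \ref{un2} together with the queue analysis preceding it: once the unattempted spots $k_1,\dots,k_{n-m}$ are fixed, the preferences lying strictly between $k_i$ and $k_{i+1}$ do not interact with the other blocks, and after subtracting $k_i$ they form a parking function chosen uniformly at random from $\PF(N_i-1,N_i-1)$, where $N_i:=k_{i+1}-k_i$; moreover the blocks are mutually independent. Since the theorem fixes a single $i$ and sends $N_i\to\infty$, it suffices to prove that for a uniform $\beta\in\PF(N-1,N-1)$ the process $\sqrt N\,[F(x)-x]$ converges weakly to $(E_x)_{0\le x\le 1}$, where $F(x)=\frac1N\#\{s:\beta_s\le Nx\}$ and $N=N_i$.

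Next I would pass from the empirical distribution of preferences to the queue-length process. Writing $r_\ell=\#\{s:\beta_s=\ell\}$ for the specification and $R_\ell=\sum_{j\le\ell}r_j$, set $S_\ell=R_\ell-\ell$. The parking condition (\ref{mirror}) with $m=n=N-1$ is precisely $S_\ell\ge 0$ for all $\ell$, with $S_0=S_{N-1}=0$, so $S$ is a nonnegative lattice path (an excursion) with steps $r_\ell-1\ge-1$; up to the additive constant $1$ it is the queue length $y_\ell$ from the proof of Theorem \ref{construction}. Since $\#\{s:\beta_s\le Nx\}=R_{\lfloor Nx\rfloor}$, one has $\sqrt N[F(x)-x]=\frac1{\sqrt N}S_{\lfloor Nx\rfloor}-\frac{\{Nx\}}{\sqrt N}$ with the last term uniformly $O(N^{-1/2})$, so it is enough to show $\frac1{\sqrt N}S_{\lfloor Nx\rfloor}\weakto E_x$. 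The crucial probabilistic input is the law of $S$: because the number of parking functions with specification $\vec r$ is $\binom{N-1}{r_1,\dots,r_{N-1}}$, and $\prod_\ell(r_\ell!)^{-1}$ is proportional to this coefficient once $\sum_\ell r_\ell=N-1$ is fixed, the specification $\vec r$ has, under the uniform measure, exactly the law of i.i.d.\ Poisson$(1)$ variables conditioned on $\{\sum_\ell r_\ell=N-1\}$ and on the excursion event $\{S_\ell\ge 0\ \forall\ell\}$. Thus $S$ is the excursion of a centered walk whose step $r_\ell-1$ has mean $0$ and variance $1$, and I would invoke the invariance principle for such doubly-conditioned walks --- equivalently the parking-function/Brownian-excursion correspondence of Chassaing and Marckert \cite{CM}, or Kaigh's conditioned-walk theorem reached via the cycle lemma --- to conclude $\frac1{\sqrt N}S_{\lfloor Nx\rfloor}\weakto E_x$, the variance-$1$ normalization matching the standard excursion.

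The main obstacle is this last step: the functional convergence of the walk conditioned simultaneously on a fixed endpoint and on positivity to the normalized Brownian excursion, which requires both finite-dimensional convergence and tightness in path space. This is exactly the content of the cited invariance principle, so the remaining work is bookkeeping: verifying the Poisson$(1)$ increment identification (in particular that $S$ and $F$ depend only on $\vec r$, so the specification's law is all that matters), checking that $N_i-1\sim N_i$ renders the discrepancies between $\sqrt{N_i}$ and $\sqrt{N_i-1}$ and between the time scale $N_i$ and the walk length $N_i-1$ asymptotically negligible (including the harmless boundary value at $x=1$), and transferring the conclusion back through the independent block decomposition.
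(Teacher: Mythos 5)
Your proof is correct, and at the top level it matches the paper's strategy: condition on the unattempted spots so that the block between $k_i$ and $k_{i+1}$ becomes, after translation, a uniform classical parking function in $\PF(N_i-1,N_i-1)$ with $N_i=k_{i+1}-k_i$, independent of the other blocks, and then invoke Chassaing--Marckert \cite{CM}. Where you genuinely diverge is in how that input is invoked. The paper routes the block back through the spanning-forest bijection of Section \ref{pf} and quotes \cite{CM} in its tree form --- the BFS queue length of a uniformly chosen tree of fixed size converges weakly to the Brownian excursion --- implicitly using the identity, recorded after Theorem \ref{construction}, that the queue length at spot $j$ equals your $S_{j-k_i}+1$. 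You never touch trees: you identify the law of the specification of a uniform classical parking function as i.i.d.\ Poisson$(1)$ variables conditioned on their total and on nonnegativity of the centered partial-sum walk, via the multinomial count $\binom{N-1}{r_1,\dots,r_{N-1}}\propto\prod_\ell 1/r_\ell!$, and then apply the invariance principle for such doubly-conditioned walks; that identification plus invariance principle is precisely the engine inside Chassaing--Marckert's own proof of their parking-function/empirical-process theorem, so the citation is legitimate and not circular. The trade-off: the paper's two-sentence proof reuses machinery it already built (the forest bijection) and treats \cite{CM} as a black box, while your version is more self-contained and makes explicit the points the paper leaves implicit --- that $F_i^\pi$ depends only on the block's specification (so the random interleaving of cars across blocks is irrelevant), that the variance-$1$ Poisson steps produce the standard normalized excursion, and that the $O(N^{-1/2})$ rounding error and the $N$-versus-$(N-1)$ time-scale discrepancy are negligible. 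Both arguments rest on the same external theorem and both are valid; yours would survive verbatim even for a reader who distrusted the bijection of Theorem \ref{construction}.
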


\begin{proof}
Chassaing and Marckert \cite{CM} established that the queue length in a BFS procedure executed on a uniformly chosen tree of fixed size converges weakly to the Brownian excursion. The conclusion follows when we apply the one-to-one correspondence between the tree components of the rooted spanning forest and the segments of the generalized parking function outlined in Section \ref{pf}.
\end{proof}

Since for fixed unattempted parking spots, the disjoint segments of the parking function are non-interacting, identifying asymptotic statistics related to the entire parking function $\pi$ corresponds to gluing together $n-m+1$ Brownian excursions (since by Proposition \ref{un} typically empty parking spots are far apart), with each piece scaled to an appropriate size. Further, conditioning on the excursion lengths, the different Brownian excursions are independent. For example, suppose that $k_{i+1}-k_i$ is large, then the displacement within segment $[k_i+1, k_{i+1}-1]$ of the parking function $\pi_{i+1}$ satisfies
\begin{equation}
(k_{i+1}-k_i)^{-3/2} \ \disp(\pi_{i+1}) \weakto \int_0^1 E_x \ dx,
\end{equation}
and we recover here partly the convergence of moments of the displacement towards the moments of the Airy law, which was obtained earlier by Flajolet et al. \cite{Fla}. We refer to Diaconis and Hicks \cite{DH} where more research directions (in each Brownian piece) are explored.


\subsection{Equality of ensembles} With only minor adaptations, the study in Diaconis and Hicks \cite{DH} of the equivalent features between the micro-canonical and canonical ensembles for classical parking functions will carry through for generalized parking functions. Let $\tilde{\FC}(m, n)=\{f: [m] \rightarrow [n+1]\}$. Thus $|\tilde{\FC}(m, n)|=(n+1)^m$, $\PF(m, n) \subseteq \tilde{\FC}(m, n)$, and
\begin{equation}
|\PF(m, n)|=\frac{n-m+1}{n+1}|\tilde{\FC}(m, n)|.
\end{equation}
As in the proof of Theorem \ref{number}, for every $\pi \in (\mathbb{Z}/(n+1)\mathbb{Z})^m$, there are exactly $n-m+1$ choices for $k \in \mathbb{Z}/(n+1)\mathbb{Z}$ such that $\pi+k(1, \dots, 1)$ (modulo $n+1$) is a parking function with the same pattern as $\pi$ after suitable reordering. As an example, we display one adapted theorem and a result that easily follows from it, which concerns repeats in coordinate values of parking functions.

\begin{theorem}[adapted from Diaconis and Hicks \cite{DH}]\label{al}
Let $\pi \in \PF(m ,n)$ and $f \in \tilde{\FC}(m, n)$ be chosen uniformly at random. Then
\begin{equation}
\PR(Y_1(\pi)=t_1, \dots, Y_{m-1}(\pi)=t_{m-1})=\PR(Y_1(f)=t_1, \dots, Y_{m-1}(f)=t_{m-1})
\end{equation}
for all $m\geq 2$ and $t_1, \dots, t_{m-1} \in \{0, 1\}$, where
\begin{equation}
Y_i(\pi)=\left\{
           \begin{array}{ll}
             1, & \hbox{if $\pi_{i}=\pi_{i+1}$,} \\
             0, & \hbox{otherwise,}
           \end{array}
         \right.
\end{equation}
and $Y_i(f)$ is similarly defined.
\end{theorem}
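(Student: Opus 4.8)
```latex
\textbf{Proof proposal.} The plan is to establish this equality of joint distributions by exhibiting, for each fixed pattern of equalities/inequalities between consecutive coordinates, an exact $(n-m+1)/(n+1)$ ratio between the count of parking functions and the count of arbitrary functions in $\tilde{\FC}(m,n)$ realizing that pattern. The key observation, already available from the proof of Theorem \ref{number} and recalled just above the statement, is Pollak's circle action: the cyclic group $\mathbb{Z}/(n+1)\mathbb{Z}$ acts on $\tilde{\FC}(m,n)$ (viewed as $(\mathbb{Z}/(n+1)\mathbb{Z})^m$) by $\pi \mapsto \pi + k(1,\dots,1)$ modulo $n+1$, and within each orbit of size $n+1$ exactly $n-m+1$ elements are parking functions.

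First I would note that the statistic $Y_i$ depends only on whether $\pi_i = \pi_{i+1}$, and that this equality is preserved under the simultaneous shift $\pi \mapsto \pi + k(1,\dots,1) \pmod{n+1}$: if $\pi_i = \pi_{i+1}$ then the shifted coordinates remain equal, and if $\pi_i \neq \pi_{i+1}$ they remain distinct, since the shift is injective on $\mathbb{Z}/(n+1)\mathbb{Z}$. Hence each vector $(Y_1,\dots,Y_{m-1}) \in \{0,1\}^{m-1}$ is constant along every Pollak orbit. This is the crucial compatibility: the event $\{Y_1 = t_1,\dots,Y_{m-1}=t_{m-1}\}$ is a union of full orbits in $\tilde{\FC}(m,n)$.

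Next I would partition $\tilde{\FC}(m,n)$ into Pollak orbits and restrict attention to those orbits lying in the event $\{Y = \vec{t}\}$. On each such orbit, all $n+1$ elements belong to $\tilde{\FC}(m,n)$ and share the same $Y$-pattern, while precisely $n-m+1$ of them are parking functions (and these too share the pattern). Therefore, summing over the orbits contained in the event,
\begin{equation}
\#\{\pi \in \PF(m,n): Y(\pi)=\vec t\} = \frac{n-m+1}{n+1}\,\#\{f \in \tilde{\FC}(m,n): Y(f)=\vec t\}.
\end{equation}
Dividing the left side by $|\PF(m,n)| = \tfrac{n-m+1}{n+1}|\tilde{\FC}(m,n)|$ and the right side by the corresponding factor times $|\tilde{\FC}(m,n)|$, the proportionality constant $\tfrac{n-m+1}{n+1}$ cancels, yielding equality of the two probabilities. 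I would observe that one must handle orbits not contained in the event, but by the orbit-constancy of $Y$ established in the previous step, every orbit is \emph{entirely} inside or entirely outside the event, so no partial orbits arise and the counting is clean.

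The main obstacle I anticipate is purely verificational rather than conceptual: one must confirm that the ``$n-m+1$ parking functions per orbit'' statement from Theorem \ref{number} applies uniformly even when $\pi$ has repeated values (the case $t_i = 1$), since Pollak's argument counts the number of rotations sending the fixed set of $n-m+1$ holes to the distinguished position. Because the hole-count argument there does not assume distinct preferences, this goes through verbatim, but I would state it explicitly. The remainder is the routine cancellation of the common factor, so no delicate estimates are needed.
```
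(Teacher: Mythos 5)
Your proposal is correct and takes essentially the same route as the paper: the paper's justification for this theorem is precisely the remark preceding it, namely that the Pollak rotation action of $\mathbb{Z}/(n+1)\mathbb{Z}$ on $(\mathbb{Z}/(n+1)\mathbb{Z})^m$ preserves the pattern of coincidences among coordinates, and each orbit of size $n+1$ contains exactly $n-m+1$ parking functions. Your orbit-constancy observation together with the cancellation of the common factor $\frac{n-m+1}{n+1}$ spells out that same argument in full detail.
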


\begin{theorem}
Take $m$ and $n$ large with $m \sim cn$ for some $0<c<1$. Consider parking function $\pi$ chosen uniformly at random from $\PF(m, n)$. Let $R(\pi)$ be the number of repeats in $\pi$ read from left to right, i.e. $R(\pi)=\#\{i: \pi_i=\pi_{i+1}\}$. Then for any fixed $j$,
\begin{equation}
\PR(R(\pi)=j) \sim \exp(-c)\frac{c^j}{j!}.
\end{equation}
\end{theorem}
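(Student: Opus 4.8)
The plan is to deduce this Poisson limit directly from Theorem \ref{al}, which reduces the distribution of the repeat count $R(\pi)$ for a uniform parking function to the distribution of the corresponding count for a uniform function $f \in \tilde{\FC}(m,n)$. Since Theorem \ref{al} asserts that the joint law of the indicator vector $(Y_1, \dots, Y_{m-1})$ is identical under $\pi$ and under $f$, and $R(\pi) = \sum_{i=1}^{m-1} Y_i(\pi)$, it suffices to compute the limiting distribution of $R(f) = \sum_{i=1}^{m-1} Y_i(f)$ where $f$ is a uniformly random function from $[m]$ to $[n+1]$.

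First I would analyze the much simpler random-function model. For a uniform $f$, the coordinates $f_1, \dots, f_m$ are i.i.d.\ uniform on $[n+1]$, so each indicator $Y_i(f) = \mathbf{1}\{f_i = f_{i+1}\}$ is a Bernoulli random variable with success probability $\PR(f_i = f_{i+1}) = 1/(n+1)$. The sum $R(f) = \sum_{i=1}^{m-1} Y_i(f)$ counts adjacent matches in an i.i.d.\ uniform sequence. Although the $Y_i$ are not mutually independent (consecutive indicators $Y_i$ and $Y_{i+1}$ share the coordinate $f_{i+1}$), they are $1$-dependent, and the individual success probability $p = 1/(n+1)$ tends to $0$ while $(m-1)p \to c$ as $m \sim cn$. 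This is precisely the regime of a Poisson limit theorem for a triangular array of weakly dependent rare events.

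The main technical step is to establish the Poisson convergence $R(f) \weakto \mathrm{Poisson}(c)$. I expect the cleanest route is the Chen--Stein method (or equivalently the method of factorial moments): one shows that for each fixed $j$, the $j$th factorial moment $\ER\big[(R(f))_j\big] = \ER\big[R(f)(R(f)-1)\cdots(R(f)-j+1)\big]$ converges to $c^j$. Expanding the factorial moment as a sum over ordered $j$-tuples of distinct indices $i_1, \dots, i_j$ of $\PR(Y_{i_1} = \cdots = Y_{i_j} = 1)$, one sees that tuples whose indices are pairwise non-adjacent contribute $(1/(n+1))^j$ each, and there are asymptotically $\binom{m-1}{j}j! \sim (cn)^j$ such tuples, giving a leading contribution $c^j$; the tuples containing at least one adjacent pair of indices (where $i_{a+1} = i_a + 1$, forcing a run of three or more equal values) are lower order because they impose an extra matching constraint, costing an additional factor $O(1/n)$ while reducing the count of tuples by only a constant factor. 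Summing, $\ER[(R(f))_j] \to c^j$, and convergence of all factorial moments to those of $\mathrm{Poisson}(c)$ implies $R(f) \weakto \mathrm{Poisson}(c)$, hence $\PR(R(f) = j) \to e^{-c} c^j/j!$.

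The hardest part is the careful bookkeeping in the factorial-moment expansion: one must verify that the $1$-dependence (the overlap between adjacent indicators) contributes only negligibly, which requires tracking how clusters of consecutive indices merge into runs and confirming that each such merging genuinely loses a power of $n$. Once this is controlled, the conclusion transfers verbatim to $R(\pi)$ via Theorem \ref{al}, since the two random variables have the same distribution for every fixed $m$ and $n$; taking $m \sim cn \to \infty$ then yields $\PR(R(\pi) = j) \sim e^{-c} c^j/j!$ as claimed.
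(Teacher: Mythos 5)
Your first step---reducing to the uniform random-function model via Theorem \ref{al}---is exactly the paper's, and your overall plan does prove the theorem. Where you diverge is in handling $R(f)$: you treat the indicators $Y_i(f)$ as genuinely dependent and reach for Chen--Stein/factorial moments, whereas the paper observes that they are in fact i.i.d. Your claim that the $Y_i(f)$ ``are not mutually independent'' is false: for any subset $S \subseteq [m-1]$, the constraints $\{f_i = f_{i+1} : i \in S\}$ partition $[m]$ into $m - |S|$ blocks of consecutive indices forced to share a common value, so $\PR\bigl(\bigcap_{i\in S}\{Y_i(f) = 1\}\bigr) = (n+1)^{m-|S|}/(n+1)^m = \prod_{i \in S}\PR(Y_i(f)=1)$; since this holds for every $S$, the indicators are \emph{exactly mutually independent}---sharing the coordinate $f_{i+1}$ creates no dependence when the values are uniform. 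Hence $R(f)$ is exactly $\mathrm{Binomial}(m-1, 1/(n+1))$, and the classical law of rare events (with $(m-1)/(n+1) \to c$) gives the Poisson limit in one line; that is the paper's proof.

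Your factorial-moment bookkeeping, as written, is wrong in two compensating ways, which is worth flagging since you call it ``the hardest part.'' A tuple containing an adjacent pair, say indices $i$ and $i+1$, has $\PR(Y_i = Y_{i+1} = 1) = \PR(f_i = f_{i+1} = f_{i+2}) = (n+1)^{-2}$: exactly the same $(n+1)^{-j}$ contribution as a non-adjacent tuple, \emph{not} smaller by an extra factor $O(1/n)$. What makes such tuples negligible is their count, not their probability: there are only $O(m^{j-1})$ of them, a factor of order $1/m$ fewer, not ``only a constant factor'' fewer as you assert. The two errors cancel, so your estimate of their total contribution is accidentally correct; but done correctly, \emph{every} ordered $j$-tuple of distinct indices contributes exactly $(n+1)^{-j}$, so $\ER[(R(f))_j] = (m-1)(m-2)\cdots(m-j)\,(n+1)^{-j} \to c^j$ with no cluster analysis needed at all---at which point your argument has collapsed into the paper's.
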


\begin{proof}
By definition, $R(\pi)=Y_1(\pi)+\cdots +Y_{m-1}(\pi)$. From Theorem \ref{al}, $Y_1(\pi), \dots, Y_{m-1}(\pi)$ has the same distribution as $Y_1(f), \dots, Y_{m-1}(f)$, where $f$ is chosen uniformly at random from $\tilde{\FC}(m, n)$. Hence the $Y_i(\pi)$'s are iid, with
\begin{equation}
\PR(Y_i(\pi)=1)=\frac{1}{n+1}.
\end{equation}
The Poisson approximation applies and gives the conclusion.
\end{proof}

\section*{Acknowledgements}

Mei Yin acknowledges helpful conversations with Lingjiong Zhu and constructive comments from Mitsuru Wilson.


\begin{thebibliography}{10}




\bibitem{CMY} Chang, H., Ma, J., Yeh, Y.-N.: Tutte polynomials and $G$-parking functions. \textit{Adv. Appl. Math.} 44: 231-242 (2010).

\bibitem{CM} Chassaing, P., Marckert, J.-F.: Parking functions, empirical processes, and the width of rooted labeled trees. \textit{Electron. J. Combin.} 8: Research Paper 14, 19 pp. (2001).

\bibitem{CP} Chebikin, D., Pylyavskyy, P.: A family of bijections between $G$-parking functions and spanning trees. \textit{J. Combin. Theory Ser. A.} 110: 31-41 (2005).

\bibitem{CP1} Cori, R., Poulalhon, D.: Enumeration of $(p, q)$-parking functions. \textit{Discrete Math.} 256: 609-623 (2002).

\bibitem{CR} Cori, R., Rossin, D.: On the sandpile group of dual graphs. \textit{European J. Combin.} 21: 447-459 (2000).


\bibitem{DH} Diaconis, P., Hicks, A.: Probabilizing parking functions. \textit{Adv. Appl. Math.} 89: 125-155 (2017).


\bibitem{Fla} Flajolet, P., Poblete, P., Viola, A.: On the analysis of linear probing hashing. \textit{Algorithmica.} 22: 490-515 (1998).

\bibitem{Pollak} Foata, D., Riordan, J.: Mappings of acyclic and parking functions. \textit{Aequationes Math.} 10: 10-22 (1974).

\bibitem{GS1} Gessel, I.M. Sagan, B.E.: The Tutte polynomial of a graph, depth-first search, and simplicial complex partitions. \textit{Electron. J. Combin.} 3: Researrch Paper 9, 38 pp. (1996).

\bibitem{GS} Gessel, I.M., Seo, S.: A refinement of Cayley's formula for trees. \textit{Electron. J. Combin.} 11: Research Paper 27, 23 pp. (2006).

\bibitem{GW} Gessel, I.M., Wang, D.-L.: Depth-first search as a combinatorial correspondence. \textit{J. Combin. Theory Ser. A.} 26: 308-313 (1979).

\bibitem{GK} Gilbey, J.D., Kalikow, L.H.: Parking functions, valet functions and priority queues. \textit{Discrete Math.} 197/198: 351-373 (1999).

\bibitem{Janson} Janson, S.: Asymptotic distribution for the cost of linear probing hashing. \textit{Random Structures Algorithms.} 19: 438-471 (2001).

\bibitem{JKLP} Janson, S., Knuth, D.E., \L{}uczak, T., Pittel, B.: The birth of the giant component. \textit{Random Structures Algorithms.} 4: 233-358 (1993).

\bibitem{KW} Konheim, A.G., Weiss, B.: An occupancy discipline and applications. \textit{SIAM J. Appl. Math.} 14: 1266-1274 (1966).

\bibitem{Knuth2} Knuth, D.E.: The Art of Computer Programming: Sorting and Searching, Volume 3. Addison-Wesley, Reading. (1998).

\bibitem{Knuth1} Knuth, D.E.: Linear probing and graphs. \textit{Algorithmica.} 22: 561-568 (1998).


\bibitem{KY} Kosti\'{c}, D., Yan, C.H.: Multiparking functions, graph searching, and the Tutte polynomial. \textit{Adv. Appl. Math.} 40: 73-97 (2008).

\bibitem{Kr} Kreweras, G.: Une famille de polyn\^{o}mes ayant plusieurs propri\'{e}t\'{e}s \'{e}numeratives. \textit{Period. Math. Hungar.} 11: 309-320 (1980).

\bibitem{KY1} Kung, J.P.S., Yan, C.H.: Exact formula for moments of sums of classical parking functions. \textit{Adv. Appl. Math.} 31: 215-241 (2003).


\bibitem{MR} Mallows, C.L., Riordan, J.: The inversion enumerator for labeled trees. \textit{Bull. Amer. Math. Soc.} 74: 92-94 (1968).

\bibitem{Pitman} Pitman, J.: Forest volume decompositions and Abel-Cayley-Hurwitz multinomial expansions. \textit{J. Combin. Theory Ser. A} 98: 175-191 (2002).

\bibitem{PS1} Pitman, J., Stanley, R.P.: A polytope related to empirical distributions, plane trees, parking functions, and the associahedron. \textit{Discrete Comput. Geom.} 27: 603-634 (2002).

\bibitem{PS} Postnikov, A., Shapiro, B.: Trees, parking functions, syzygies, and deformations of monomial ideals. \textit{Trans. Amer. Math. Soc.} 356: 3109-3142 (2004).

\bibitem{Riordan} Riordan, J.: Combinatorial Identities. John Wiley \& Sons, Inc., New York. (1968).

\bibitem{Sp} Spencer, J.: Enumerating graphs and Brownian motion. \textit{Comm. Pure Appl. Math.} 50: 291-294 (1997).

\bibitem{Stanley1} Stanley, R.P.: Parking functions and noncrossing partitions. \textit{Electron. J. Combin.} 4: Research Paper 20, 14 pp. (1997).

\bibitem{Stanley2} Stanley, R.P.: Hyperplane arrangements, parking functions and tree inversions. In: Sagan, B.E., Stanley, R.P. (eds.) Mathematical Essays in Honor of Gian-Carlo Rota. Progr. Math. Volume 161, pp. 359-375. Birkh\"{a}user, Boston. (1998).

\bibitem{Stanley} Stanley, R.P.: Enumerative Combinatorics Volume 2. Cambridge University Press, Cambridge. (1999).


\bibitem{Tri} Tricomi, F.G.: Asymptotische eigenschaften der unvollst\"{a}ndigen Gammafunktion. \textit{Math. Z.} 53: 136-148 (1950).

\bibitem{Yan1} Yan, C.H.: Generalized parking functions, tree inversions, and multicolored graphs. \textit{Adv. Appl. Math.} 27: 641-670 (2001).

\bibitem{Yan} Yan, C.H.: Parking functions. In: B\'{o}na, M. (ed.) Handbook of Enumerative Combinatorics. Discrete Math. Appl., pp. 835-893. CRC Press, Boca Raton. (2015).

\end{thebibliography}
\end{document}